\theoremstyle{plain}
\newtheorem{thm}{Theorem}[section]
\newtheorem{prp}[thm]{Proposition}
\newtheorem{lem}[thm]{Lemma}
\newtheorem{crl}[thm]{Corollary}
\newtheorem{problem}[thm]{Problem}
\newtheorem{question}[thm]{Question}
\newtheorem{questions}[thm]{Questions}
\theoremstyle{definition}
\newtheorem{definition}[thm]{Definition}
\theoremstyle{remark}
\newtheorem{remark}[thm]{Remark}
\newtheorem{remarks}[thm]{Remarks}
\newtheorem{example}[thm]{Example}
\newtheorem{examples}[thm]{Examples}
\newtheorem{illustration}[thm]{Illustration}
\title{Metabelianisations of finitely presented groups}
\author{Ralph Strebel}
\date{5nd of November 2018, version v9}
\begin{document}
\thispagestyle{plain}
%
\begin{abstract}
In their paper \cite{PaSu17}, S. Papadima and A. Suciu write
at the end of Subsection 1.1:
\begin{quote}
Further motivation for considering
Question 1.1 comes from an effort to understand 
whether the maximal metabelian quotient $G = \Gamma/\Gamma''$ of a finitely presented group $\Gamma$ is also finitely presentable.
\end{quote}

\noindent
As an answer to the stated problem,
the authors investigate the class of finitely presented, very large groups   
$\Gamma$ and show 
that metabelian top $\Gamma/\Gamma''$ of every group in this class is infinitely related.
In this article, 
I study some further classes of finitely presented groups
with the aim of finding out 
whether or not the metabelian tops of the members of these classes admit finite presentations.
\end{abstract}
\maketitle 

\section{Introduction}
\label{sec:Introduction}
Suppose $\Gamma$ is a group admitting a finite presentation
and $G$ is a quotient group of $\Gamma$.
Then $G$ is, of course, finitely generated but it need not be finitely presentable.
This fact is well-known 
and follows, for instance, from B. H. Neumann's observation 
that there are only countably many groups with a finite presentation,
but that the cardinality of pairwise non-isomorphic finitely generated groups is larger, namely $2^{\aleph_0}$ (\cite[Theorem 14]{Neu37}).
\footnote{As pointed out by P. Hall on page 433 of \cite{Hal54},
the cardinality of the class of two generated, center-by-metabelian groups
is likewise $2^{\aleph_0}$.}
Of course, there are also specific examples of finitely generated groups 
which are known to be infinitely related, 
for instance the wreath product $G = \Z \wr C_\infty$ of two infinite cyclic groups (see, \eg Theorem 1 in \cite{Bau61}).

In their recent paper \cite{PaSu17},
S. Papadima and A. Sugiu consider couples of finitely generated groups
\[
(\Gamma, G= \Gamma/\Gamma'')
\]
 and show, in Theorem 1.3,  
 that the quotient group $G$ is infinitely related
 whenever $\Gamma$ is a finitely presentable \emph{very large group}.
 \footnote{A finitely generated group is called \emph{very large}  
 if it admits a non-abelian free quotient group.}
 
 In this article, 
 I follow the lead of Papadima and Sugiu 
 and study familiar classes of finitely presented groups $\Gamma$ 
 with the aim of finding out
 whether or not their metabelian tops $G = \Gamma/\Gamma''$
are finitely related.

Actually, three kinds of classes will be considered:
\begin{enumerate}[(I)]
\item for the classes of the first kind,
the metabelian tops of all groups in the class are finitely presented;
\item for the classes of the second kind,
the metabelian top of the groups in the class can be finitely related 
and whether this is the case depends on a known parameter in a simple way;
\item the metabelian tops of the groups in the classes of the third kind 
exhibit the same behaviour as the groups studied by Papadima and Sugiu: 
none of them does admit a finite presentation.
\end{enumerate}

Here is a brief description of the layout of the paper.
Section \ref{sec:Preliminaries} 
collects known results about metabelian groups
that are needed for an understanding of the main part of the paper.
This main part begins with Section \ref{sec:Three-basic-results};
in it three results are stated and proved 
that form the basis of the investigations undertaken in later sections.
A first class, 
that of finitely presented soluble group,
is the theme of Section
\ref{sec:Fp-groups-with-no-non-abelian-free-subgroups}; 
this class is of kind (I).
In Sections \ref{sec:One-relator-groups} through 
\ref{sec:Artin-systems-of-finite-type},
one-relator groups, knot groups, Artin groups 
and, in more detail, Artin groups of finite type, are studied;
these classes are of the second kind.
The paper ends with classes of type (III); 
they are discussed in Section 
\ref{sec:Classes-of-groups-with-infinitely-related-metabelian-tops}.

Each of the Sections 
\ref{sec:Three-basic-results}
through
\ref{sec:Classes-of-groups-with-infinitely-related-metabelian-tops}
begins with a brief introduction putting its topic into context; 
so there is no need here to present these classes in more detail.
\smallskip

\emph{Acknowledgements.}
I thank Markus Brodmann and Ian J. Leary for helpful discussions and suggestions.

\section{Preliminaries}
\label{sec:Preliminaries}

\subsection{Some properties of finitely generated metabelian groups}
\label{ssec:Properties-of-metabelian-groups}
In this section, 
I collect some properties and remarks about finitely generated metabelian groups
that will be helpful in later sections.
\smallskip

Since many of the results of this paper deal with finitely presented groups
it is appropriate to begin the list of properties of metabelian groups 
with an old observation of B. H. Neumann's 
(see \cite[(8) Lemma]{Neu37} or \cite[2.2.3]{Rob96}):
\begin{prp}
\label{prp:Neumans-result}
If the kernel of an epimorphism $F \epi Q$ of a finitely generated free $F$  onto a group $Q$ is the normal closure of a finite set,
so is the kernel of every epimorphism of a finitely generated group 
onto $Q$.
\end{prp}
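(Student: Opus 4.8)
The plan is to exploit the universal property of free groups together with a standard ``Tietze-transfer'' argument. Suppose $F \epi Q$ has kernel $N$ which is the normal closure in $F$ of a finite set $\{r_1,\dots,r_k\}$, and suppose $\pi\colon H \epi Q$ is any epimorphism from a finitely generated group $H$, say $H$ generated by $h_1,\dots,h_m$. Write $K = \ker\pi$. The goal is to produce a finite subset of $K$ whose normal closure in $H$ is all of $K$.

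First I would reduce to the case where $H$ is free. Choose a finitely generated free group $E$ with a basis in bijection with the $h_i$, and let $\rho\colon E \epi H$ be the induced epimorphism; then $\sigma = \pi\rho\colon E \epi Q$ is an epimorphism of a finitely generated free group onto $Q$. If I can show $\ker\sigma$ is the normal closure of a finite set, then, since $\rho$ maps $\ker\sigma$ onto $\ker\pi = K$ (because $\ker\rho \subseteq \ker\sigma$), the images of those finitely many elements normally generate $K$ in $H$. So it suffices to prove the statement for two epimorphisms of finitely generated free groups onto the same $Q$.

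So now let $\sigma\colon E \epi Q$ and $F \epi Q$ both be epimorphisms of finitely generated free groups, with $\ker(F\epi Q)$ normally generated by $r_1,\dots,r_k$. Say $F$ is free on $x_1,\dots,x_n$ and $E$ is free on $y_1,\dots,y_m$. The key step is to compare the two presentations. For each $i$ pick a word $w_i \in E$ with $\sigma(w_i)$ equal to the image of $x_i$ in $Q$; this defines a homomorphism $\alpha\colon F \to E$ with $\sigma\alpha$ equal to $F\epi Q$. Similarly pick $v_j \in F$ with the image of $v_j$ in $Q$ equal to $\sigma(y_j)$, giving $\beta\colon E\to F$. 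Then $\ker\sigma$ is normally generated by the finite set consisting of: the elements $\alpha(r_1),\dots,\alpha(r_k)$, together with the elements $y_j\,\alpha(v_j)^{-1}$ for $j=1,\dots,m$. Indeed, modulo the normal closure of the latter elements every generator $y_j$ of $E$ can be rewritten as $\alpha$ applied to a word in the $x_i$, so modulo that normal closure $E$ is generated by $\alpha(F)$; and the kernel of $\sigma$ restricted there is exactly the normal closure of $\alpha(\ker(F\epi Q)) = \langle\!\langle \alpha(r_i)\rangle\!\rangle$. Making this bookkeeping precise — i.e.\ checking that an arbitrary element of $\ker\sigma$ lies in the normal closure of these finitely many elements — is the one place where a careful (but entirely routine) normal-form manipulation is needed.

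The main obstacle, such as it is, is purely notational: juggling the two free groups, the back-and-forth maps $\alpha,\beta$, and verifying the normal-generation claim without sign or index errors. There is no conceptual difficulty; the content is just that ``being finitely presentable'' depends only on $Q$ and not on the chosen generating set or the chosen free cover, which is the classical fact that a Tietze transformation between two finite-generating-set presentations changes a finite presentation into a finite presentation. One could alternatively phrase the whole argument as: pass to a common refinement, add the finitely many ``generator-substitution'' relators, and invoke that the kernel of a composite $E \epi F \epi Q$ is generated by $\ker(E\epi F)$ together with lifts of a normal generating set of $\ker(F\epi Q)$ — but the $\alpha,\beta$ version above keeps everything inside free groups and avoids circularity.
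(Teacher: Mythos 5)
Your proof is correct and follows the standard argument; the paper itself gives no proof of this proposition, merely citing Neumann and Robinson, whose proofs proceed along essentially the same lines (reduce to comparing two finite-rank free covers of $Q$ via lifts $\alpha,\beta$ and add the generator-substitution relators $y_j\alpha(\beta(y_j))^{-1}$). The only point worth polishing is the deferred ``bookkeeping'': rather than a normal-form manipulation, one can simply observe that with $N$ the normal closure of your finite set, the induced maps $Q\to E/N$ and $E/N\to Q$ compose to the identity on $Q$, forcing $N=\ker\sigma$.
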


Now to finitely generated metabelian groups.
By definition, a group $G$ is \emph{met\-abelian} 
if its second derived group $G'' = ((G')'$ 
is reduced to the unit element of $G$.
Let $G$ be a \emph{finitely generated metabelian} group.
Its abelianisation $G_{\ab}$ is then, of course, finitely generated;
in addition, $G_{\ab}$ admits a finite presentation
(this follows, \eg from the fact 
that every cyclic group admits a presentation with one generator and one relator,
and from \cite[2.2.4]{Rob96}).
The derived group $G'$ of $G$ is therefore finitely generated 
as a normal subgroup (\cite[Thm.\;3]{Hal54}).
Now $G'$ is abelian; so it is often written additively.
If this is done,
$G'$ becomes a module $A$ over $\Z{G_{\ab}}$.
The fact that $G'$, as a normal subgroup of $G$, 
is finitely generated translates into the statement 
that the $\Z{G_{\ab}}$-module $A$ is finitely generated over $\Z{G_{\ab}}$.

I move on to another result about metabelian groups $G$
that will be exploited repeatedly in the sequel,
namely the fact that 
\emph{every finitely generated metabelian  group satisfies the maximal condition
on normal subgroups}
(\cite{Hal54}, \cf\cite[15.3.1]{Rob96}).
Put differently,
if $G$ a finitely generated metabelian group 
then each of its normal subgroups is the normal closure of a finite set of elements.

Prior to exploiting the stated result 
I pause for an observation.
Assume $\Gamma$ is a finitely presented group.
By definition, 
there exists then a finitely generated free group $F$ 
and an epimorphism $\pi \colon F \epi \Gamma$ such 
that $R = \ker \pi$ is the normal closure of finitely many relators.
\footnote{By Neumann's Lemma  \ref{prp:Neumans-result}
the choices of $F$ and of $\pi$ play no rôle as long as $F$ is of finite rank
and as long as one is not interested in the minimal number of relators.}
Consider now a normal subgroup $N$ of $\Gamma$ 
that can be generated, as a normal subgroup,  by finitely many elements.
Let $\rho \colon G \epi \Gamma/N$ be the canonical projection
and consider the composition 
\[
\rho \circ \pi \colon F \epi \Gamma \epi \Gamma/N.
\]
The kernel of $\rho \circ\pi$ is then finitely generated as normal subgroup of $F$
and so the group $\Gamma/N$ is finitely presentable.

In the sequel,
a variant of the contraposition of the implication just proved 
will be invoked repeatedly, namely 
\begin{lem}
\label{lem:Infinitely-related-metabelian-top}
Suppose $\Gamma$ is a finitely generated group
which has a metabelian image $\Gamma/N$ 
that does \emph{not} admit a finite presentation.
Then the metabelian top $\Gamma/\Gamma''$ of $\Gamma$ is infinitely related, too.
\end{lem}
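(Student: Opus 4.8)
The plan is to prove the contrapositive: assuming that the metabelian top $\Gamma/\Gamma''$ \emph{does} admit a finite presentation, I shall deduce that the image $\Gamma/N$ is finitely presentable as well, contrary to hypothesis. The starting observation is that $\Gamma/N$, being metabelian, has trivial second derived group, which is to say $\Gamma'' \subseteq N$. Consequently the canonical projection $\Gamma \epi \Gamma/N$ factors through $\Gamma/\Gamma''$, and the kernel of the induced epimorphism $\Gamma/\Gamma'' \epi \Gamma/N$ is $\overline{N} := N/\Gamma''$; it therefore suffices to show that $\overline{N}$ is the normal closure of a finite subset of $\Gamma/\Gamma''$.

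For this I would appeal to the result recalled above that \emph{every finitely generated metabelian group satisfies the maximal condition on normal subgroups}. The group $\Gamma/\Gamma''$ is metabelian by construction and finitely generated because $\Gamma$ is; hence each of its normal subgroups, $\overline{N}$ in particular, is generated as a normal subgroup by finitely many elements.

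The conclusion is now immediate from the observation made just before the statement of the lemma: a quotient of a finitely presented group by a normal subgroup that is finitely generated as a normal subgroup is itself finitely presented. Applying this with $\Gamma/\Gamma''$ in the role of the finitely presented group, $\overline{N}$ in the role of the normal subgroup, and $(\Gamma/\Gamma'')/\overline{N} \cong \Gamma/N$ as the quotient, one obtains that $\Gamma/N$ is finitely presentable --- the desired contradiction. (If one prefers to keep Neumann's Lemma~\ref{prp:Neumans-result} in plain sight, it guarantees that the particular choice of finite presentation of $\Gamma/\Gamma''$ used in this step is immaterial.)

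There is essentially no obstacle to overcome; the argument is a short chaining of the maximal condition with the remark preceding the statement. The only thing worth checking is that the hypotheses of the maximal-condition theorem genuinely hold for $\Gamma/\Gamma''$ --- finite generation and metabelianness --- both of which are automatic; and it is perhaps worth noting in passing that the argument uses only that $\Gamma$ is finitely generated, not that it is finitely presented.
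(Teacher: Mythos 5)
Your proof is correct and is essentially the paper's own argument: note $\Gamma''\subseteq N$, use P.~Hall's maximal condition on normal subgroups of the finitely generated metabelian group $\Gamma/\Gamma''$ to see that $N/\Gamma''$ is the normal closure of finitely many elements, and then invoke the observation preceding the lemma (the contrapositive formulation). The paper merely states the two key facts and leaves the contraposition implicit, so your write-up is just a more explicit rendering of the same proof.
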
 

\begin{proof}
The normal subgroup $N$ of $\Gamma$ 
contains the second derived group $\Gamma''$ of $\Gamma$.
Since  the quotient $N/\Gamma''$ is a normal subgroup of $\Gamma/\Gamma''$,
it is the normal closure of finitely many elements by P. Hall's result mentioned before.
\end{proof}
\subsection{Sigma-Theory for metabelian groups}
\label{ssec:Review-Sigma-theory}
The Sigma-Theory for finitely generated \emph{metabelian} groups goes back 
to the paper
\cite{BiSt80} by Bieri and Strebel.
In \cite{BiSt81a} this theory was developed further.
More important then was the new approach to the theory
propounded by Bieri and Groves in \cite{BiGr84}.

I recall now the basic definitions introduced in \cite{BiSt80} 
and formulate a result that will be a corner stone of many results of this article. 
I shall use, however, the newer terminology going back to 
\cite[Section 8]{BiGr84} and so I shall say \emph{character} instead of valuation
and \emph{character sphere} instead of valuation sphere.

Let $Q$ be a finitely generated, multiplicatively written abelian group.
A \emph{character} of $Q$ is a homomorphism $\chi \colon Q \to \R$ of $Q$ 
into the additive group of the field of real numbers $\R$.
Two characters $\chi$, $\chi'$ of $Q$ are called \emph{equivalent} 
if one is a positive multiple of the other.
The relation so defined is an equivalence relation;
the class represented by the character $\chi$ will be denoted by $[\chi]$.

Since $Q$ is finitely generated the real vector space 
$V =\Hom(Q, \R)$ of all characters of $Q$ is a finite dimensional, real vector space whose dimension equals the torsion-free abelian rank of $Q$.
The topologies induced on $\Hom(Q, \R)$ by the norms on $V$ coincide therefore 
and give $\Hom(Q, \R)$ the structure of a well-defined topological vector space.
The classes of equivalent characters correspond to closed rays in $V$ 
that emanate from the origin $0$.
 If one omits the origin and equips the set of equivalence classes of non-zero characters with the quotient topology induced from $V \smallsetminus \{0\}$ 
one arrives at a topological space $S(Q)$.
This space is homeomorphic to the unit sphere $\s^{\dim V - 1}$
 in euclidean space of dimension $\dim V$
and it is called the \emph{character sphere} of $Q$.

Assume next that $A$ is a finitely generated (left) $\Z{Q}$-module.
Associate to it a subset $\Sigma_A(Q)$ of $S(Q)$ as follows.
Given a non-zero character $\chi$ of $S(Q)$,
define
\begin{equation}
\label{eq:Character-moniod}
Q_\chi = \{ [\chi]\in Q \mid \chi(q) \geq 0\}.
\end{equation}
This subset is a submonoid of the multiplicative group $Q$ 
and it does not change if $\chi$ is replaced by a positive multiple of it.
One can therefore associate to the module $A$ a well-defined subset of $S(Q)$
by putting
\begin{equation}
\label{eq:Sigma-A}
\Sigma_A(Q) 
= 
\{ [\chi] \in S(Q) \mid A \text{ is finitely generated over the monoid ring } \Z{Q_\chi} \}.
\end{equation}
This subset is called the \emph{geometric invariant} of the module $A$.

The set $\Sigma_A(Q)$ is hard to work with directly,
but there are alternative expressions for it.
In the sequel I use only one such expression;
it is related to the centralizer
\[
C(A) = \{\lambda \in \Z{Q} \mid \lambda \cdot a = a   \text{ for all } a \in A \}
\]
of $A$ in $\Z{Q}$.

\begin{lem}[Sigma-Criterion]
\label{lem:Sigma-criterion}
Let $A$ be a finitely generated $\Z{Q}$-module 
and let $\chi \colon Q \to \R$ be a non-zero character.
Then $A$ is finitely generated over $\Z{Q_\chi}$ if, and only if, 
there exists  an element  $\lambda = \sum_q c_q \cdot q$  in $C(A)$ 
such that $\chi(q) > 0$ for every $q$ in the support of $\lambda$.
Moreover, if $A$ is finitely generated over $\Z{Q_\chi}$,
every set generating $A$ as a $\Z{Q}$-module 
also generates $A$ as a $\Z{Q}$-module.
\end{lem}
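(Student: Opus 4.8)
The plan is to establish the two implications and the supplementary clause at the same time, the engine being Nakayama's lemma in its determinant-trick form: \emph{if $M$ is a finitely generated module over a commutative ring and $I$ is an ideal with $IM=M$, then $(1-x)M=0$ for some $x\in I$.}

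I would begin with the ``only if'' direction. Suppose $A$ is finitely generated over $R:=\Z{Q_\chi}$. Since $\chi\neq 0$ there is an element $t\in Q$ with $\chi(t)>0$; as $t$ is a unit of $\Z{Q}$ it acts on $A$ as an automorphism of the underlying abelian group, so $t\cdot A=A$ and hence $(tR)\cdot A=A$. Applying Nakayama's lemma to the ideal $tR$ of $R$ produces $s\in R$ with $(1-ts)A=0$, i.e. $\lambda:=ts\in C(A)$. Because $\operatorname{supp}(s)\subseteq Q_\chi$, every $q\in\operatorname{supp}(\lambda)$ has the form $q=tq'$ with $\chi(q)=\chi(t)+\chi(q')\geq\chi(t)>0$, which is exactly what is asserted (the case $A=0$ is trivial, take $\lambda=t$).

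Next I would do the ``if'' direction together with the supplement. Assume $\lambda\in C(A)$ has $\chi$ strictly positive on its (necessarily finite) support, and put $\delta:=\min\{\chi(q):q\in\operatorname{supp}(\lambda)\}>0$. From $\lambda a=a$ for all $a\in A$ one gets $\lambda^{N}a=a$ for all $a$ and all $N\geq 0$, while $\operatorname{supp}(\lambda^{N})\subseteq\{q\in Q:\chi(q)\geq N\delta\}$. Let $\{a_i\}$ be any generating set of $A$ as a $\Z{Q}$-module, and let $a\in A$; writing $a=\sum_i\gamma_i a_i$ with $\gamma_i\in\Z{Q}$, almost all zero, one has $a=\lambda^{N}a=\sum_i(\lambda^{N}\gamma_i)a_i$, and for $N$ large enough --- a bound that depends only on the finitely many non-zero $\gamma_i$, via the minimum of $\chi$ on their supports --- every coefficient $\lambda^{N}\gamma_i$ is supported inside $Q_\chi$ and so lies in $\Z{Q_\chi}$. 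Hence $\{a_i\}$ generates $A$ over $\Z{Q_\chi}$. Taking $\{a_i\}$ finite gives the ``if'' implication; and once $A$ is known to be finitely generated over $\Z{Q_\chi}$, the ``only if'' direction supplies such a $\lambda$, so the same computation shows that every generating set of $A$ as a $\Z{Q}$-module is a generating set of $A$ as a $\Z{Q_\chi}$-module, which is the supplementary statement.

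The step I expect to cause trouble is obtaining an element of $C(A)=1+\operatorname{Ann}(A)$, not merely of $\operatorname{Ann}(A)$, with support in the open half-space $\{\chi>0\}$. A naive determinant argument applied to a system of module relations only yields an element of $\operatorname{Ann}(A)$ supported in $\{\chi>0\}$, and that is strictly weaker: for $A=\Z{Q}/(t-2)$ with $\chi(t)>0$ one has $t(t-2)\in\operatorname{Ann}(A)$ supported in $\{\chi>0\}$, yet $C(A)$ contains no such element and $A$ is not finitely generated over $\Z{Q_\chi}$. Feeding Nakayama's lemma the \emph{principal} ideal $tR$ for a single $t$ with $\chi(t)>0$ is exactly the device that fixes this: it returns $1-ts\in\operatorname{Ann}(A)$ with $ts\in tR\subseteq\Z{Q_\chi}$, so the extra factor $t$ automatically pushes the whole support of $\lambda=ts$ into $\{\chi>0\}$.
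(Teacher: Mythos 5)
Your argument is correct, and it is in substance the same as the proof the paper points to: the paper does not prove the lemma itself but only cites Proposition 2.1 of \cite{BiSt80} and Lemma 19 of \cite{Str84}, where the ``only if'' half is exactly the determinant trick you package as Nakayama's lemma applied to the principal ideal $t\,\Z{Q_\chi}$ (classically carried out on the relations $a_i = t\sum_j \mu_{ij}a_j$), and the ``if'' half together with the supplementary clause is the usual support-shifting argument with high powers of $\lambda$. You have also read the final clause in its intended form (generation over $\Z{Q_\chi}$; the second $\Z{Q}$ in the statement is a typo), and your remark about why one needs an element of $C(A)$ rather than merely of the annihilator, cured by the extra factor $t$, identifies precisely the point of the classical computation.
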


\begin{proof}
See Proposition 2.1 in \cite{BiSt80} or Lemma 19 in \cite{Str84}.
\end{proof}

The Sigma-criterion has a very useful and important consequence:
it implies 
that $\Sigma_A(Q)$ is always an open subset of the sphere $S(Q)$ 
(see, \cite[Prop.\;2.2(i)]{BiSt80}). 
\smallskip

I recall, finally, a main result of the article \cite{BiSt80}.
\begin{thm}[\protect{\cite[Thm.\;A]{BiSt80}}]
\label{thm:Theorem-A}
Let $G$ be a finitely generated metabelian group 
that is an extension of an abelian, normal subgroup $A$ by an abelian group $Q$ and equip $A$ with the structure of left $\Z{Q}$-module induced by conjugation.
Then $A$ is a finitely generated $\Z{Q}$-module and the following statements hold:
\begin{enumerate}[(i)]
\item $G$ is polycyclic if, and only if, $\Sigma_A(Q) = S(Q)$.
\item $G$ admits a finite presentation if, and only if, 
$\Sigma_A(Q) \cup - \Sigma_A(Q) = S(Q)$.
\end{enumerate}
In the above, $-\Sigma_A(Q)$ 
denotes the set of antipodes of the points in $\Sigma_A(Q)$.
\end{thm}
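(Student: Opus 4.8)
The plan is to extract everything from the Sigma-criterion (Lemma~\ref{lem:Sigma-criterion}), from the fact that $\Sigma_A(Q)$ is an open subset of $S(Q)$, and from the Noetherianity of the group ring $\Z{Q}$. First I record why $A$ is finitely generated over $\Z{Q}$: since $G$ is finitely generated and metabelian, $G_{\ab}$ is finitely presented, so $G'$ is finitely generated as a normal subgroup of $G$ by P.~Hall's theorem recalled above; as $Q=G/A$ is abelian one has $G'\subseteq A$, so $A/G'$ is a finitely generated abelian group on which $Q$ acts trivially and hence is finitely generated over $\Z{Q}$, while $G'$ is finitely generated over $\Z{G_{\ab}}$ and so over its quotient ring $\Z{Q}$; splicing the two gives the assertion. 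I also note that $\Z{Q}$ is Noetherian, $Q$ being a finitely generated abelian group — a point one must not misuse, since otherwise every such $G$ would wrongly appear to be finitely presented.

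\emph{Part (i).} If $G$ is polycyclic then its abelian subgroup $A$ is finitely generated as an abelian group, hence finitely generated over every monoid ring $\Z{Q_\chi}$, so $\Sigma_A(Q)=S(Q)$. Conversely, assume $\Sigma_A(Q)=S(Q)$. By the Sigma-criterion each $[\chi]$ lies in an open set $\{[\chi']\mid \chi'>0\text{ on }\supp\lambda\}$ with $\lambda\in C(A)$, and by compactness of $S(Q)$ finitely many $\lambda_1,\dots,\lambda_m$ suffice. Each $\lambda_i-1$ annihilates $A$ and, since every character vanishes at the identity of $Q$, the identity is absent from $\supp\lambda_i$; so modulo the annihilator of $A$ one gets rewriting identities expressing $\bar 1$ as an integral combination of the $\bar q$ with $q\in\supp\lambda_i$, and because the supports jointly point in every direction a norm‑descent on $Q\otimes\R$ (multiply such an identity by $\bar u$, choosing $i$ with $\supp\lambda_i$ in the open half‑space opposite to $u$) shows that $\Z{Q}$ modulo the annihilator of $A$ is finitely generated as an abelian group. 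Then so is $A$, whence $G$, an extension of a finitely generated abelian group by a finitely generated abelian group, is polycyclic.

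\emph{Part (ii), necessity.} Suppose $G$ is finitely presented; I must show no antipodal pair misses $\Sigma_A(Q)$. Since $G$ is then of type $\mathrm{FP}_2$, this follows either from the homological translation of the $\mathrm{FP}_2$‑condition into a finiteness statement over $\Z{Q}$ (an exterior‑square computation), or more concretely by applying Reidemeister--Schreier to a finite presentation of $G$ relative to the epimorphism $G\to\Z$ induced by a rank‑one character $\chi$ — after a preliminary reduction to rank‑one characters, legitimate because the complement of $\Sigma_A(Q)$ is closed and governed by the finite data of $A$ — upon which finite generation of the resulting relation module forces $A$ to be finitely generated over $\Z{Q_\chi}$ or over $\Z{Q_{-\chi}}$. \emph{Part (ii), sufficiency} is the substantial half. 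Assuming $\Sigma_A(Q)\cup-\Sigma_A(Q)=S(Q)$, and that it is harmless to take $G=A\rtimes Q$ (a twist by a $2$‑cocycle alters only finitely many relators), I use Noetherianity to fix a finite module presentation $\Z{Q}^{k}\epi A$ with kernel generated by $m_1,\dots,m_r$, pick generators $t_1,\dots,t_n$ of $Q$ with finitely many defining relations, and let $a_1,\dots,a_k$ correspond to the chosen module generators. Let $P$ be presented by the $t_i$ and $a_p$ subject to: the relations of $Q$; all $[a_p,a_q]=1$; all $[a_p,\,t_ia_qt_i^{-1}]=1$ and $[a_p,\,t_i^{-1}a_qt_i]=1$; and the module relators spelling out $m_1,\dots,m_r$ as products of conjugates of the $a_p$. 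This presentation is finite and $P\to G$ is onto. Writing $B$ for the normal closure of $\{a_1,\dots,a_k\}$ in $P$, one has $P/B\cong Q$ and an epimorphism $B\to A$; if $B$ were abelian it would be a $\Z{Q}$‑module killed by $m_1,\dots,m_r$, hence a quotient of $A$, and since $A$ is a Noetherian (so Hopfian) module the composite $A\epi B\epi A$ would be an isomorphism, forcing $P\cong G$.

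Everything therefore reduces to the claim that $B$ is abelian, i.e.\ that $[a_p,\,w a_q w^{-1}]=1$ in $P$ for every $w\in Q$ (a fixed word in the $t_i$). I would prove this by induction on the norm of $w$ in $Q\otimes\R$, the cases $w\in\{1\}\cup\{t_i^{\pm1}\}$ being built in; for the inductive step one uses that, by hypothesis, the point antipodal to the direction of $w$ lies in $\Sigma_A(Q)$, so the Sigma-criterion supplies $\lambda\in C(A)$ with $\chi<0$ on $\supp\lambda$, and the relation $\lambda-1\in(\text{annihilator of }A)$ — a consequence of $m_1,\dots,m_r$ — read inside $P$ lets one rewrite $w a_q w^{-1}$ as a product of conjugates $w'a_qw'^{-1}$ with $w'$ of strictly smaller norm, modulo commutators already trivial by the inductive hypothesis. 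Making this rewriting legitimate in $P$, where only the \emph{finitely many} near‑diagonal commuting relations are available, and controlling the error terms so that the induction closes, is the technical core of the theorem and the step I expect to be the main obstacle; it is exactly here that $\Sigma_A(Q)\cup-\Sigma_A(Q)=S(Q)$ — not merely $\Sigma_A(Q)\neq\emptyset$ — is indispensable, since $w$ must be shrinkable whichever direction it points in. I would finally cross‑check the argument against the homological picture, in which finite presentability is equivalent to type $\mathrm{FP}_2$ and the latter to the tameness of the $\Z{Q}$‑module $A$.
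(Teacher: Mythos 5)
First, a point of comparison: the paper itself contains no proof of this statement. Theorem \ref{thm:Theorem-A} is imported verbatim from Bieri and Strebel \cite{BiSt80} (Theorem A there) and is used downstream as a black box, exactly like the Sigma-criterion of Lemma \ref{lem:Sigma-criterion}. So the only fair benchmark is the original proof in \cite{BiSt80}, whose overall strategy your sketch does follow: your part (i) (polycyclic $\Leftrightarrow$ $\Sigma_A(Q)=S(Q)$, via the compactness-plus-norm-descent argument showing $\mathbb{Z}Q/\mathrm{Ann}(A)$ is finitely generated as an abelian group) is essentially correct in outline, needing only the routine points that the finite subcover yields a uniform $\varepsilon$ for the descent and that the bounded-norm elements of $Q$ form a finite set.

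The genuine gap is in part (ii). For sufficiency — the substantial half of the theorem — you build the candidate finite presentation $P$, correctly reduce to proving that the normal closure $B$ of $a_1,\dots,a_k$ in $P$ is abelian, and then stop at exactly the step that constitutes the theorem: the claim that the Sigma-criterion ``lets one rewrite $w a_q w^{-1}$ as a product of conjugates with $w'$ of smaller norm, modulo commutators already trivial by induction'' is the assertion to be proved, not an argument. Inside $P$ only the finitely many near-diagonal commuting relations are available, the rewriting produces error terms conjugated by words that are not automatically shorter, and organising the induction so that it closes (this is where $\Sigma_A(Q)\cup-\Sigma_A(Q)=S(Q)$, rather than mere nonemptiness, is used) is the technical core of \cite{BiSt80}; you acknowledge this yourself, so the proof is incomplete precisely where the difficulty lies. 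The necessity direction is likewise only gestured at: ``the homological $\mathrm{FP}_2$ translation or a Reidemeister--Schreier argument'' is named but not carried out, and the reduction to rank-one (discrete) characters needs the density of such characters together with the closedness of the complement of $\Sigma_A(Q)$, which you assert rather than establish. As it stands the proposal is a plausible reconstruction of the known strategy with the decisive steps missing, not a proof.
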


%
%
%
%
%
%

\section{Three basic results}
\label{sec:Three-basic-results}
In this section, I establish three results which help one in determining
whether or not the metabelian top $G = \Gamma/\Gamma''$ of a finitely presented group $\Gamma$ admits a finite presentation.
The first result describes a sufficient, as well as necessary, 
condition for $G$ to admit a finite presentation
when $G$ is a finitely generated metabelian group of a special kind;
the second result states a sufficient condition for $G$ to be finitely related,
while the third result lists various conditions 
which force $G$ to be infinitely related.
Applications of each of these three results will be given in later sections.
\subsection{First result}
\label{ssec:First-basic-result}
The first result deals with finitely generated metabelian groups
and is a rather simple consequence of the characterization of finitely presented metabelian groups propounded in \cite{BiSt80}. 
\begin{thm}
\label{thm:FR-implies-polycyclic}
Let $G$ be a finitely generated metabelian group 
which admits an automorphism $\iota$ 
that induces the automorphism $gG' \mapsto g^{-1}G'$ 
on the abelianisation $G_{\ab}$ of $G$.
Then $G$ is finitely related if, and only if it is polycyclic.
\end{thm}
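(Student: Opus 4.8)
The plan is to apply Theorem~\ref{thm:Theorem-A}. Write $G$ as an extension of $A = G'$ by $Q = G_{\ab}$, with $A$ a finitely generated $\Z{Q}$-module via conjugation. By part~(i) of Theorem~\ref{thm:Theorem-A}, polycyclicity of $G$ is equivalent to $\Sigma_A(Q) = S(Q)$, while by part~(ii), $G$ is finitely presented if and only if $\Sigma_A(Q) \cup -\Sigma_A(Q) = S(Q)$. So it suffices to show that the automorphism $\iota$ forces $\Sigma_A(Q)$ to be symmetric, i.e. $\Sigma_A(Q) = -\Sigma_A(Q)$; then the finite-presentation condition collapses to $\Sigma_A(Q) = S(Q)$, which is polycyclicity. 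The reverse implication (polycyclic $\Rightarrow$ finitely related) is standard and needs no hypothesis on $\iota$.

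First I would unwind what $\iota$ does. Since $\iota$ induces inversion on $Q = G_{\ab}$, it restricts to an automorphism $\iota|_A$ of the normal subgroup $A = G'$, and a short computation with the module action shows that for $a \in A$ and $q \in Q$ one has $\iota(q \cdot a) = \iota(q)\, \iota(a) = q^{-1} \cdot \iota(a)$ — that is, $\iota|_A$ is semilinear over the ring automorphism $\tau \colon \Z{Q} \to \Z{Q}$ induced by $q \mapsto q^{-1}$. Consequently $\iota|_A \colon A \to {}^{\tau}\!A$ is an isomorphism of $\Z{Q}$-modules, where ${}^{\tau}\!A$ denotes $A$ with the action twisted by $\tau$. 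Twisting by $\tau$ replaces the monoid $Q_\chi$ by $Q_{-\chi}$, so $\Sigma_{{}^{\tau}\!A}(Q) = -\Sigma_A(Q)$; and since $\Sigma$ is an isomorphism invariant of the module, $\Sigma_A(Q) = \Sigma_{{}^{\tau}\!A}(Q) = -\Sigma_A(Q)$.

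With the symmetry $\Sigma_A(Q) = -\Sigma_A(Q)$ in hand, condition~(ii) of Theorem~\ref{thm:Theorem-A} reads $\Sigma_A(Q) \cup -\Sigma_A(Q) = \Sigma_A(Q) = S(Q)$, which is exactly condition~(i). Hence $G$ is finitely presented if and only if it is polycyclic. The main obstacle I anticipate is purely bookkeeping: pinning down precisely that $\iota$ carries $A$ into itself (it does, since $A = G'$ is characteristic) and that the induced semilinear map genuinely produces the twisted module ${}^{\tau}\!A$, so that one may legitimately invoke the equality $\Sigma_{{}^{\tau}\!A}(Q) = -\Sigma_A(Q)$ — all of which follows directly from the definitions~\eqref{eq:Character-moniod} and~\eqref{eq:Sigma-A} but should be stated carefully. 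An alternative to the twisted-module language, should one prefer to avoid it, is to argue directly via the Sigma-Criterion (Lemma~\ref{lem:Sigma-criterion}): $\iota$ maps the centralizer $C(A)$ onto $C(A)$ while sending an element with support in $\{\chi > 0\}$ to one with support in $\{\chi < 0\}$, which yields the same symmetry of $\Sigma_A(Q)$.
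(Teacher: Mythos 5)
Your proposal is correct and follows the same overall route as the paper: reduce via Theorem \ref{thm:Theorem-A} to showing that $\Sigma_A(Q)$ is invariant under the antipodal map, whereupon conditions (i) and (ii) of that theorem coincide. The only difference is cosmetic: you establish the symmetry $\Sigma_A(Q) = -\Sigma_A(Q)$ by observing that $\iota|_A$ is a $\tau$-semilinear isomorphism (so $A \cong {}^{\tau}\!A$ and $\Sigma$ is an isomorphism invariant), whereas the paper carries out the equivalent direct computation with a centralizer element $\lambda = \sum_k c_k q_k$ via the Sigma-Criterion (Lemma \ref{lem:Sigma-criterion}) --- precisely the alternative you sketch in your last sentence.
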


\begin{proof}
Set $Q = G/G'$ and let $A$ denote the abelian group $G'$ 
equipped with the $\Z{Q}$-module structure induced by conjugation.
The automorphism $\iota$ of $G$ induces an automorphism $\iota_{\ab}$
on the abelianisation $Q$ of $G$.
This automorphism implies  
that the subset $\Sigma_A(Q)$ is invariant under the antipodal map (see below).

Theorem \ref{thm:Theorem-A} then allows one to argue as follows.
By statement (ii) of the theorem,
the group $G$ is finitely presented if, and only if, 
$\Sigma_A(Q) \cup -\Sigma_A(Q) = S(Q)$.
By the stated invariance of $\Sigma_A(Q)$,
the equality $\Sigma_A(Q) \cup -\Sigma_A(Q) = S(Q)$ holds if, and only if
$\Sigma_A(Q) = S(Q)$, 
and by the statement (i) of the Theorem \ref{thm:Theorem-A} 
this last condition is fulfilled precisely if $G$ is polycyclic.

We are left with proving that $\Sigma_A(Q) = -\Sigma_A(Q)$.
Let $\chi \colon Q \to \R$ be a non-zero character of $Q$
and assume that $[\chi] \in \Sigma_A(Q)$.
Lemma \ref{lem:Sigma-criterion} then guarantees
that there exists a group ring element 
$\lambda = \sum_k c_k  \cdot q_k$ in  $C(A)$ 
so that  $a = \lambda \cdot a$ for every $a \in A$ 
and so that $\chi(q_k) > 0$ for $q_k \in \supp \lambda$.
Choose, for every $q_k$ an element $g_k \in G$ with $q_k = g_k G'$.
Every $a \in A$ can then be written in the form
\[
a=
\left( \sum\nolimits _k c_k \cdot q_k \right) \cdot a 
=
\sum\nolimits _k  c_k \act{g_k}a.
\]
By applying  the automorphism $\iota$ to this representation,
one gets
\begin{align}
\iota(a) 
&=
\iota \left( \sum\nolimits_k  c_k \act{g_k}a \right)
=
\sum\nolimits_k c_k \cdot \act{\iota(g_k)} \iota (a) 
=
\left(\sum\nolimits_k c_k \cdot \iota_{\ab}{(q_k)}\right)  \cdot \iota (a) 
\label{eq:Representation-of-iota(a)}\\
&= 
\left(\sum\nolimits_k c_k \cdot q_k^{-1} \right) \cdot \iota(a).
\notag
\end{align}
Since $\iota \colon G \to G$ is bijective, 
so is its restriction to $A = G'$;
calculation \eqref{eq:Representation-of-iota(a)}
thus proves
that the group ring element $\sum\nolimits_k c_k \cdot q_k^{-1}$ belongs to the centralizer of $A$.
As $(-\chi) (q_k^{-1}) = \chi (q_k) > 0$, 
Lemma \ref{lem:Sigma-criterion} allows one to infer
that $[-\chi] \in \Sigma_A(Q)$.

All taken together, 
the previous calculation proves the implication 
\[
[\chi] \in \Sigma_A(Q) \Rightarrow  [-\chi] \in \Sigma_A(Q).
\]
Its converse holds by symmetry.
\end{proof}

\begin{remark}
\label{remark:Dichotomy}
Theorem \ref{thm:FR-implies-polycyclic} 
will be applied to Artin systems in Section \ref{sec:Artin-systems}.
The standard presentation of an Artin system $(\Gamma, S)$
allows one to deduce with ease
that the map which sends each generator $s$
of the standard generating set $S$ to its inverse $s^{-1}$
extends to an automorphism of $\Gamma$.
The metabelian top $G = \Gamma/\Gamma''$ 
will therefore admit an automorphism $\iota$ 
as described in the statement of Theorem \ref{thm:FR-implies-polycyclic}.
\end{remark}
%
%
\subsection{Second result}
\label{ssec:Second-basic-result}
%
An essential hypothesis of Theorem \ref{thm:FR-implies-polycyclic} is the requirement that the metabelian group $G$ admits an automorphism
which induces $-\id$ on  $G_{\ab}$.
There are, however, no restrictions 
on the structure of the abelianized group.

The second result makes a strong restriction on $G_{\ab}$:
it demands that it be infinite cyclic;
moreover,
it requires 
that the group be generated by two-generator subgroups of a particular kind. 
In view of the applications, 
I formulate  the result for a finitely generated group $\Gamma$ 
that need not be metabelian.

\cite{DiLe99}
\begin{thm}
\label{thm:Groups-with-infinite-cyclic-abelianisation}
Let $\Gamma$ be a finitely generated group 
which admits an epimorphism $\pi \colon \Gamma \epi \Z$,
a finite set $S$ of generators
and a subset $E_0$ of the set of all pairs $\{s,s'\}$ with elements in $S$,
having the following properties:
\begin{enumerate}[a)]
\item the epimorphism $\pi$ maps every generator $s \in S$ to $1 \in \Z$, and
\item the combinatorial graph $\Delta_0$ with vertex set $S$ 
and edge set  $E_0$ is connected.
\end{enumerate}
For every edge $e  = \{s, s'\} \in E_0$,
let $G_e$ be the subgroup generated  by the pair $\{s, s'\}$
and set $N_e = \ker  \pi \cap G_e$.
Then  the union $\bigcup_{e \in E_0} N_e$ generates the kernel of  $\pi$. 
\end{thm}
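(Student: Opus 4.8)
The plan is to exploit the connectedness of $\Delta_0$ and proceed by an induction that builds up $\ker\pi$ from the subgroups $N_e$. First I would fix a spanning tree $T \subseteq \Delta_0$ rooted at some vertex $s_0 \in S$; such a tree exists by hypothesis b). For each $s \in S$ there is a unique reduced path in $T$ from $s_0$ to $s$, and reading off the edge-words along this path produces, for each generator $s$, a word $w_s$ in the generators such that $w_s \equiv s \pmod{\ker\pi}$ — indeed $w_s$ can be taken to be a product of the endpoints of the tree edges, so $\pi(w_s) = \pi(s) = 1$ by hypothesis a). Replacing $s$ by $s' := w_s^{-1}s$ for each $s$, I obtain a new generating set $S'$ of $\Gamma$ consisting entirely of elements of $\ker\pi$, together with the single additional generator $s_0$ (which maps to $1\in\Z$); equivalently, $\ker\pi$ is generated as a group by $s_0$-conjugates of the elements $s'$, but the cleaner formulation is the one below.

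The key step is the following standard fact about groups mapping onto $\Z$: if $\Gamma = \langle S\rangle$ with $\pi(s)=1$ for all $s\in S$, then $\ker\pi$ is generated as a group by the elements $u_{s,s'} := s\,s'^{-1}$ for $s,s'\in S$ — because any $w\in\ker\pi$ of $\pi$-length zero can be bracketed, after inserting trivial pairs $s'^{-1}s'$, into a product of such differences. Moreover one only needs the $u_{s,s'}$ for $\{s,s'\}$ ranging over the edges of a \emph{connected} graph on $S$: the relation $u_{s,s''} = u_{s,s'}\cdot(s'\,u_{s',s''}\,s'^{-1})$ lets one telescope along any path, and conjugation by $s'$ (an element of $\Gamma$ whose image in $\Z$ is $1$) can itself be absorbed after re-expressing $s' = u_{s_0,s'}\cdot s_0$ and noting conjugation by $s_0$ preserves $\ker\pi$; iterating, every $u_{s,s'}$ lies in the subgroup generated by $\{\,{}^{g}u_e : e\in E_0,\ g\in\ker\pi\,\}$, hence in the normal closure of $\bigcup_{e\in E_0}N_e$ in $\ker\pi$. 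But $\ker\pi$ is the increasing union of the retracts $\ker\pi$ itself, and the point is that conjugation by $s_0$ can be handled directly: writing each generator of $\ker\pi$ in terms of the $u_e$ and pushing the $s_0$-conjugations through the telescoping identity shows the $u_e$ already generate $\ker\pi$ as a \emph{group}. Now for $e=\{s,s'\}\in E_0$ the element $u_e = s\,s'^{-1}$ lies in $G_e\cap\ker\pi = N_e$, so $\bigcup_{e\in E_0}N_e$ generates $\ker\pi$.

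Let me restate the argument more carefully, since the treatment of conjugations is the delicate point. Let $K=\ker\pi$ and let $H\le K$ be the subgroup generated by $\bigcup_{e\in E_0}N_e$; I must show $H=K$. Since $u_e\in N_e\subseteq H$ for every $e\in E_0$, it suffices to show the $u_e$ generate $K$. Any $k\in K$ is a product of generators $s_i^{\pm1}$ with total $\pi$-value $0$; grouping consecutive letters and using $s^{\varepsilon} = (s\,t^{-1})^{\varepsilon}\cdot t^{\varepsilon}$ repeatedly, one rewrites $k$ as a product of factors $s_0^{\,m}\,u\,s_0^{-m}$ where $u$ ranges over differences $s\,s'^{-1}$ with $\{s,s'\}$ an \emph{arbitrary} pair — this is the classical normal-form computation for $\Gamma \to \Z$. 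To cut down to edges of $\Delta_0$: pick a spanning tree $T$ and, for each pair $\{s,s'\}$, use the telescoping identity $s\,s'^{-1} = (s\,s_{1}^{-1})(s_1\,s_2^{-1})\cdots(s_{r-1}\,s'^{-1})$ along the $T$-path $s=s_0^{*},s_1,\dots,s_r=s'$; each factor is $u_{e}$ for a tree edge $e$, but conjugated by elements of the form (product of a $w_{s_i}$), and each $w_{s_i}$ is itself a product of tree-edge endpoints, hence lies in the subgroup generated by the $u_e$ together with $s_0$. The $s_0$-conjugations and the $\ker\pi$-conjugations thus all land inside $\langle u_e : e\in E_0\rangle$ after a final induction on word length, giving $K=\langle u_e : e\in E_0\rangle\le H\le K$.

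I expect the main obstacle to be exactly this bookkeeping with conjugations: showing that the conjugating elements appearing in the telescoping identities can be re-expressed so as not to leave the subgroup generated by the $N_e$. The cleanest route is probably to argue at the level of the covering space / Bass–Serre tree of the map $\Gamma\to\Z$ — i.e. to show the graph-of-groups decomposition of $\Gamma$ over $\Z$ coming from $T$ has $\ker\pi$ equal to the fundamental group of the induced graph of subgroups, whose edge and vertex groups are controlled by the $G_e$ — but an elementary rewriting argument as above also works and may be what the author intends.
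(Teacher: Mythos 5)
There is a genuine gap, and it sits exactly at the step you flag as delicate. Your argument culminates in the claim that the $s_0$-conjugations can be pushed through the telescoping identities so that ``the $u_e$ already generate $\ker\pi$ as a group'', i.e. $\ker\pi=\langle\, s s'^{-1} : \{s,s'\}\in E_0\,\rangle$. This is false, and no bookkeeping will repair it. Take $\Gamma$ the free group on $S=\{x,y\}$, $\pi(x)=\pi(y)=1$, and $E_0$ the single edge $\{x,y\}$: then $\ker\pi$ is an infinitely generated free group, freely generated by the conjugates $x^{n}(xy^{-1})x^{-n}$, $n\in\mathbb{Z}$, whereas $\langle xy^{-1}\rangle$ is infinite cyclic. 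The theorem itself survives because here $N_e=\ker\pi\cap G_e$ is all of $\ker\pi$; but the example shows precisely why the statement is about the full subgroups $N_e$ and not about the single elements $u_e$: conjugation by a generator cannot be absorbed into the group generated by the differences $s s'^{-1}$, it can only be absorbed into the group generated by the $N_e$. (Your earlier sentence ``$\ker\pi$ is the increasing union of the retracts $\ker\pi$ itself'' does not parse as an argument, and the Bass--Serre aside is not developed, so neither rescues the step.)

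The repair, which is how the paper argues, keeps your telescoping identity but applies it to conjugate subgroups rather than to generate the kernel out of the $u_e$. Let $M\leq\ker\pi$ be the subgroup generated by $\bigcup_{e\in E_0}N_e$. For a generator $s_0\in S$, an edge $e=\{s,s'\}\in E_0$ and $g\in N_e$, write
$s_0\, g\, s_0^{-1}=(s_0 s^{-1})\,(s\, g\, s^{-1})\,(s_0 s^{-1})^{-1}$.
The middle factor lies in $N_e$ because $s\in G_e$ and $N_e=\ker\pi\cap G_e$ is normal in $G_e$ --- this is exactly where one needs all of $N_e$ and not just $u_e$ --- while $s_0 s^{-1}$ lies in $M$ by telescoping along a path in the connected graph $\Delta_0$ from $s_0$ to $s$, since each factor $s_i s_{i+1}^{-1}$ has $\pi$-value $0$ and lies in $G_{\{s_i,s_{i+1}\}}$, hence in $N_{\{s_i,s_{i+1}\}}$. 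The same computation with $s_0^{-1}$ shows $M$ is normal in $\Gamma$. Connectedness then forces all generators to have the same image in $\Gamma/M$, so $\Gamma/M$ is cyclic; since $M\subseteq\ker\pi$ and that common image is sent to $1\in\mathbb{Z}$, the induced map $\Gamma/M\to\mathbb{Z}$ is an isomorphism and $M=\ker\pi$. So your ingredients (the differences $s s'^{-1}$, the path-telescoping) are the right ones, but the conclusion you draw from them --- that conjugation can be eliminated and the $u_e$ alone generate --- would prove a false statement; the argument has to run through normality of $M$ and the cyclicity of $\Gamma/M$ instead.
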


\begin{proof}
Let $M$ denote the subgroup of the kernel $N$ of $\pi$
which is generated by the subgroups $N_e$  with $e \in E_0$. 
I claim that $M$ is normal in $\Gamma$
and that it coincides with $N$.
Fix a generator $s_0 \in S$, an edge $e \in E_0$ with endpoints $s$ and $s'$,
and choose an element  $g \in N_e$. 
Then 
\[
s_0 \cdot g \cdot s_0^{-1} 
= 
(s_0 \cdot s^{-1}) \cdot (s \cdot g \cdot s^{-1}) \cdot (s_0 \cdot s^{-1})^{-1}.
\]
The factor $s \cdot g \cdot s^{-1}$ of this product  belongs to $N_e$.
Moreover,
as the elements $s_0$ and $s$ are vertices of the connected graph $\Delta_0$, 
there exists a path $(s_0, s_1, \ldots, s_f = s)$  in $\Delta_0$ 
which leads from $s_0$ to the vertex $s$ of the edge $e$.
This path allows one to express the first factor $s_0 \cdot s^{-1}$ 
as a product the the form
 \[
s_0 \cdot s^{-1} = (s_0 \cdot s_1^{-1}) \cdots (s_{f-1} \cdot s_f^{-1}).
\]
Recall now that $\pi$ maps every generator $s \in S$ to $1 \in \Z$.
Each factor  $s_i \cdot s^{-1}_{i+1}$ of the above product lies 
therefore in one of the kernels $N_e$ and hence in $M$,
and an analogous statement holds for $s_0^{-1}$.
It follows
that $M$ is a \emph{normal} subgroup of $\Gamma$,
and that $\Gamma/M$ is an cyclic group, 
generated by the common image of the generators $s_i $ in $\Gamma/M$.
On the other hand, the definition of $M$ implies that $M$ is contained in $N$ 
which, by definition, 
is the kernel of the epimorphism $\pi $ of $\Gamma$ onto $\Z$.
So the normal subgroup $M$ must coincide with $N$
and the proof is complete. 
\end{proof}

Theorem
\ref{thm:Groups-with-infinite-cyclic-abelianisation}
will be applied in section \ref{ssec:Artin-groups-many-odd-labels}
to Artin systems with infinite cyclic abelianisation
and in section \ref{ssec:Generalised-Artin-systems}
to a generalization of these Artin systems.
%
\subsection{Third result}
\label{ssec:Third-basic-result}
%
The third result lists conditions on a finitely generated group 
which imply 
that its metabelian top 
does \emph{not} admit a finite presentation.

To state the result,
I need a definition.
\begin{definition}
\label{def:Large-group}
Let $\Gamma$ be a finitely generated group. 
If $\Gamma$ maps onto a non-abelian free group
it is called \emph{very large};  
 if is contains a very large subgroup of finite index 
 it is termed \emph{large}.
\end{definition}

A finitely generated large group $\Gamma$ may not be very large.
This happens, for instance, 
whenever the metabelian top of $\Gamma$ is polycyclic 
and $\Gamma$ contains a non-abelian free subgroup of finite index.
Explicit examples of such groups are described in \cite[Example 4.10]{PaSu17}.
Actually, there are many other examples with the stated property; 
see, \eg Theorem IV.3.7 in \cite{Bau93}).
\smallskip

Now to the announced third result:
\begin{thm}
\label{thm:Infinitely-related-metabelian-top}
Suppose $\Gamma$ is a finitely generated group 
satisfying one of the conditions (i) through (v):
\begin{enumerate}[(i)]
\item $\Gamma$ is very large;
\item $\Gamma$ maps onto the wreath product of two infinite cyclic groups;
\item $\Gamma$ maps onto the wreath product $\Z_m \wr C_\infty$ 
of a finite cyclic group of order $m > 1$  and an infinite cyclic group; 
\item $\Gamma$ maps onto the semi-direct product $A \rtimes C_\infty$ of an
 \emph{infinite},  locally finite, abelian group $A$ by an infinite cyclic group;
 \item $\Gamma$ maps onto an infinitely related metabelian group.
\end{enumerate}
Then the metabelian top of $\Gamma$ does not admit a finite presentation. 
\end{thm}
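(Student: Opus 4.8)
The plan is to reduce each of the five conditions (i)--(v) to the single case covered by Lemma~\ref{lem:Infinitely-related-metabelian-top}: namely, to exhibit, for a group $\Gamma$ satisfying any one of these conditions, a metabelian quotient $\Gamma/N$ that does \emph{not} admit a finite presentation. Once such an $N$ is produced, Lemma~\ref{lem:Infinitely-related-metabelian-top} applies verbatim and tells us that $\Gamma/\Gamma''$ is infinitely related. So the work is entirely in identifying the right metabelian quotient in each case and checking it is infinitely related; the logical skeleton is trivial.

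First I would dispose of the soft reductions. For (v) there is nothing to do: the hypothesis directly furnishes an infinitely related metabelian quotient, and Lemma~\ref{lem:Infinitely-related-metabelian-top} finishes. For (i), if $\Gamma$ maps onto a non-abelian free group $F$, then it maps onto $F/F''$, the free metabelian group of rank $\geq 2$; since $F/F'' \epi \Z \wr \Z$ (via an epimorphism $F \epi \Z^2 \to \Z\wr\Z$ realising the wreath product as a metabelian quotient of the free metabelian group), case (i) reduces to case (ii). Alternatively one can cite directly that free metabelian groups of rank $\geq 2$ are infinitely related. In any event, (ii), (iii) and (iv) are the substantive cases, and for each the proposed metabelian quotient $\Gamma/N$ is simply the given wreath product or semidirect product itself — these are already metabelian — so the only thing to verify is that each of
\[
\Z \wr C_\infty, \qquad \Z_m \wr C_\infty \ (m>1), \qquad A \rtimes C_\infty
\]
(with $A$ infinite, locally finite, abelian) is infinitely related.

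For (ii), $\Z \wr C_\infty$ is the classical Baumslag example already cited in the introduction (Theorem~1 of \cite{Bau61}), so this is a citation. The cleanest uniform treatment of (ii), (iii), (iv) together, though, is via the Bieri--Strebel invariant: each of these groups $G$ is an extension of an abelian normal subgroup $A$ by $Q = C_\infty = \Z$, so $S(Q)$ is the $0$-sphere, consisting of the two points $[\chi]$ and $[-\chi]$. By Theorem~\ref{thm:Theorem-A}(ii), $G$ is finitely presented iff $\Sigma_A(Q) \cup -\Sigma_A(Q) = S(Q)$; since the two characters are antipodal, this forces $\Sigma_A(Q)$ to contain at least one of the two points. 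Using the Sigma-criterion (Lemma~\ref{lem:Sigma-criterion}) one computes the centralizer $C(A)$ in $\Z[Q] = \Z[t,t^{-1}]$: for $\Z\wr C_\infty$ the module is $A = \Z[t,t^{-1}]$ itself with $C(A) = \Z$, so no element of $C(A)$ has support on which a nonzero character is strictly positive, whence $\Sigma_A(Q) = \emptyset$; for $\Z_m \wr C_\infty$ the module is $\F_m[t,t^{-1}]$ (or $(\Z/m)[t,t^{-1}]$) with likewise $\Sigma = \emptyset$; for $A \rtimes C_\infty$ with $A$ infinite locally finite, $A$ is a torsion $\Z[t,t^{-1}]$-module that is finitely generated (since $A\rtimes C_\infty$ is assumed to be a quotient of the finitely generated $\Gamma$, and one may also take $A$ itself finitely generated as a module without loss), and one checks that finite generation over $\Z[t]$ or over $\Z[t^{-1}]$ would force a polynomial relation making $A$ finitely generated as an abelian group, contradicting $|A| = \infty$; hence again $\Sigma_A(Q) = \emptyset$. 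In all three cases $\Sigma_A(Q)\cup-\Sigma_A(Q) = \emptyset \neq S(Q)$, so $G$ is not finitely presented.

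I expect the main obstacle to be case (iv), specifically pinning down the module-theoretic argument that an infinite locally finite abelian group $A$, when made into a finitely generated $\Z[t,t^{-1}]$-module via an infinite-order automorphism, cannot be finitely generated over $\Z[t]$ (equivalently over $\Z[t^{-1}]$). The point is that "finitely generated over $\Z[Q_\chi]$" for $Q_\chi = \{t^n : n \geq 0\}$ would, combined with the existing finite generation over $\Z[t,t^{-1}]$, produce via the Sigma-criterion an element $\lambda = \sum c_q q \in C(A)$ with all exponents in $\supp\lambda$ strictly positive; normalising, $t^k$ acts on $A$ as a $\Z$-linear combination of the identity and positive powers of $t$, which one massages into: $A$ is generated as an abelian group by finitely many orbits-truncated-to-finite-length, forcing $A$ finite — contradiction. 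I would want to state this carefully, perhaps isolating it as the assertion $\Sigma_A(\Z) = \emptyset$ for such modules, and I would double-check whether one needs $A$ finitely generated as a module a priori or whether it follows from $\Gamma$ being finitely generated together with $A\rtimes C_\infty$ being a quotient (it does, by P.~Hall's theorem that finitely generated metabelian groups have finitely generated derived module, already recalled in Section~\ref{sec:Preliminaries}). Everything else is bookkeeping around Lemma~\ref{lem:Infinitely-related-metabelian-top} and Theorem~\ref{thm:Theorem-A}.
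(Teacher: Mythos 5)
Your overall skeleton is the same as the paper's: everything is funnelled through Lemma \ref{lem:Infinitely-related-metabelian-top} by exhibiting an infinitely related metabelian quotient, and (i) is reduced to (ii) exactly as in the paper. Where you diverge is in how the quotients in (ii)--(iv) are shown to be infinitely related: the paper proves the chain (i)$\Rightarrow$(ii)$\Rightarrow$(iii)$\Rightarrow$(iv)$\Rightarrow$(v), so that only case (iv) needs a real argument, and settles it by showing that $A\rtimes C_\infty$ is not an ascending HNN-extension with finitely generated base contained in $A$ (the conjugates $t^mBt^{-m}$ of a finite subgroup $B$ all have the same order, so an ascending chain of them is stationary) and then quoting Theorem A of \cite{BiSt78}; you instead apply Theorem \ref{thm:Theorem-A} and the Sigma-criterion (Lemma \ref{lem:Sigma-criterion}) and compute $\Sigma_A(\Z)=\emptyset$ case by case. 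For $Q\cong\Z$ the two criteria are equivalent, so this is a legitimate variant. Two small slips: the centralizer of $A=\Z[t,t^{-1}]$ in $\Z[t,t^{-1}]$ is $\{1\}$, not $\Z$ (harmless, since $\chi(t^0)=0$ still gives $\Sigma_A=\emptyset$), and the parenthetical ``$F\epi\Z^2\to\Z\wr\Z$'' is garbled, as $\Z^2$ cannot surject onto $\Z\wr\Z$; the claim $F/F''\epi\Z\wr\Z$ itself is fine because $\Z\wr\Z$ is a two-generator metabelian group.

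The genuine gap is in case (iv), which you yourself flag as the crux but leave as a sketch, and the sketch as written does not go through. From $\lambda=\sum_{j\ge 1}c_jt^j\in C(A)$ the identity $a=\lambda\cdot a$ only lets you rewrite \emph{negative} powers of $t$ applied to the module generators in terms of nonnegative ones---that is precisely the ``moreover'' clause of Lemma \ref{lem:Sigma-criterion}---but it does not let you truncate the forward orbits unless the leading coefficient $c_M$ is $\pm 1$, which is not given. So ``$A$ is generated as an abelian group by finitely many orbits truncated to finite length'' does not follow; all you get is that $A$ is an ascending union of finite subgroups, which is no contradiction with $A$ being infinite and locally finite. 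To close the gap you must use the torsion hypothesis more seriously. For instance: since $A$ is a torsion group and is generated as a $\Z[t,t^{-1}]$-module by finitely many elements, some integer $N\ge 1$ annihilates $A$; if $A$ were finitely generated over $\Z[t]$ it would be a finitely generated $(\Z/N)[t]$-module on which $t$ acts invertibly, and after splitting into $p$-primary parts and filtering by powers of $p$, each layer is a finitely generated $\mathbb{F}_p[t]$-module which must be torsion (a free summand would contradict invertibility of $t$), hence finite; so $A$ would be finite, a contradiction, and likewise for $\Z[t^{-1}]$. Alternatively, avoid $\Sigma$ here altogether and run the paper's own argument: the monoid-ring finite generation corresponds to an ascending HNN structure with finite base, and the stationarity of the chain of conjugates of a finite subgroup, together with Theorem A of \cite{BiSt78}, gives the contradiction. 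With $\Sigma_A(\Z)=\emptyset$ actually established, the rest of your proof is correct.
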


\begin{proof}
I first prove the chain of implications 
\[
(i) \Rightarrow (ii) \Rightarrow (iii) \Rightarrow (iv) \Rightarrow (v).
\]

$(i) \Rightarrow (ii)$: if $\Gamma$ maps onto a non-abelian free group,
it maps onto a free group of rank 2 and hence onto the wreath product 
$\Z \wr C_\infty$ of two infinite cyclic groups.

$(ii) \Rightarrow (iii)$: for every $m > 1$,  
the group $\Z_m \wr C_\infty$ is a quotient  of $\Z \wr C_\infty$.

(iii) $\Rightarrow (iv)$: 
the wreath product $\Z_m \wr C_\infty$ is an extension of an infinite, locally finite abelian group by an infinite cyclic group.

(iv) $\Rightarrow (v)$: 
Let $Q$ be a finitely generated group 
that is an extension of an infinite, locally finite and abelian group $A$ 
by an infinite cyclic group generated by $t$.
\emph{Then $Q$ cannot be an ascending HNN-extension with a finitely generated subgroup $B \subset A$ and stable letter $t$}. 
Indeed,
 if $B$ is a finitely generated subgroup of the locally finite group $A$ 
 then it is finite;
 if, in addition, $B \subseteq t B t^{-1} $
the union of the conjugated groups $t^mB t^{-m}$ with $m \in \N$
will equal $B$ and so be finite,
whence this union cannot coincide with the infinite group $A$.  
If follows that $Q$ is not an ascending HNN-extension 
with finitely generated base group $B$ and stable letter $t$.
One sees similarly, that $Q$ is not an ascending HNN-extension with finitely generated base group and stable letter $t^{-1}$.
Theorem A in \cite{BiSt78}  allows one therefore to infer
that the metabelian group $Q$ does not admit a finite presentation.
\smallskip

Consider, finally, the metabelian top of $G =\Gamma/\Gamma''$ of $\Gamma$
and let $Q$ be a metabelian quotient of $\Gamma$ that does not admit a finite presentation.
The group $Q$ is then a quotient of $G$, 
the maximal metabelian quotient of $\Gamma$, 
and so  Lemma
\ref{lem:Infinitely-related-metabelian-top} allows one to infer
that $G$ is infinitely related, too.
\end{proof}
\begin{remarks}
\label{remarks:History-of-preceding-theorem}
The preceding theorem summarizes several earlier results.
Condition (i) is the hypothesis used by Papadima and Sugiu 
in Proposition 4.9 of \cite{PaSu17}.  
Condition (iii) figures in a paper by G. Baumslag 
(see \cite[Thm.\;A]{Bau72});
actually Baumslag proves more: if condition (iii) is fulfilled, 
the homology group $H_2( - ,\Z)$ of the metabelian top is not finitely generated.
Condition (iv) is used in \cite[Thm.\;D]{BaMi09}
and also in \cite[Corollary B3.32]{Str13a}.
Condition (v), finally, has been added to the list in order to draw attention to the following fact:
in conditions (iii) and (iv) the groups $G$ are extensions of abelian torsion-groups by an infinite cyclic group.
Now there are infinitely related metabelian groups 
which are neither of this kind 
nor free metabelian nor the wreath product of two infinite cyclic groups.
Prototypes of such groups are the metabelian tops of the Baumslag-Solitar groups
\begin{equation}
\label{eq:Baumslag-Solitar-group}\Gamma =\langle a, t \mid t a^{m} t^{-1} = a^{n}\rangle
\end{equation} 
with $m > 1$, $n > 1$ and $\gcd(m,n) = 1$.
Then the metabelian top is an extension of the form $\Z[1/(m \cdot n)] \rtimes C_\infty$ and it is infinitely related
(see \cite{BaSo62} or \cite{BaSt76}).
\end{remarks}
\section{Finitely presented groups all whose free subgroups are cyclic}
\label{sec:Fp-groups-with-no-non-abelian-free-subgroups}
The class of groups considered in this section
comprises finitely presented \emph{soluble} groups,
but also many other groups.
The starting point of our discussion is Theorem B in  \cite{BiSt80},
a result that can be stated like this:
\begin{thm}
\label{thm:ThmB-BiSt80}
Let $\Gamma$ be a finitely presented group, 
or a group of type $\FP_2$, which does not contain a non-abelian free subgroup.
Then every metabelian quotient of $\Gamma$ admits a finite presentation.
\end{thm}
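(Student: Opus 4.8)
The plan is to verify, for an arbitrary metabelian quotient $G = \Gamma/N$, the Bieri--Strebel condition in part (ii) of Theorem \ref{thm:Theorem-A}. Since $G$ is finitely generated, the group $Q := G_{\ab}$ is finitely generated abelian; and, by P. Hall's theorem recalled in Section \ref{sec:Preliminaries}, the abelian normal subgroup $A := G'$, endowed with its conjugation $\Z{Q}$-module structure, is finitely generated over $\Z{Q}$. So it suffices to show that $\Sigma_A(Q) \cup -\Sigma_A(Q) = S(Q)$, and I shall argue by contradiction. Suppose some class lies outside this union. The complement $\Sigma_A(Q)^c$ is a rationally defined polyhedral subset of $S(Q)$ (the polyhedrality theorem of Bieri and Groves, \cite{BiGr84}); hence so is $\Sigma_A(Q)^c \cap \bigl(-\Sigma_A(Q)\bigr)^c$, which is non-empty by assumption and therefore contains a rational point. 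After rescaling I obtain an epimorphism $\chi \colon Q \epi \Z$ with $[\chi], [-\chi] \notin \Sigma_A(Q)$; composing with the canonical projection $p \colon \Gamma \epi G \epi Q$ gives an epimorphism $\psi = \chi \circ p \colon \Gamma \epi \Z$.

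The main step extracts structure from $\Gamma$ through the theory of \cite{BiSt78}. As $\Gamma$ is finitely presented (or of type $\FP_2$) and $\psi \colon \Gamma \epi \Z$, that theory presents $\Gamma$ along $\psi$ as an HNN-extension $\Gamma = \langle B, t \mid t B_1 t^{-1} = B_2 \rangle$ with $B$ a finitely generated subgroup of $\ker \psi$, with $\psi(t) = 1$, and with $B_1, B_2 \le B$. If both $B_1 \ne B$ and $B_2 \ne B$, then, picking $b \in B \setminus (B_1 \cup B_2)$ --- which exists since no group is the union of two proper subgroups --- a ping-pong argument (essentially Britton's Lemma) shows that $t$ and $b t b^{-1}$ generate a free subgroup of rank $2$ of $\Gamma$, contradicting the hypothesis that $\Gamma$ has no non-abelian free subgroup. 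Hence $B_1 = B$ or $B_2 = B$; replacing $\chi$ by $-\chi$ and $t$ by $t^{-1}$ if necessary, I may assume $B_2 = B$, so that $t^{-1} B t = B_1 \subseteq B$ and consequently $\ker \psi = \bigcup_{n \ge 0} t^{n} B t^{-n}$, an ascending union.

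It remains to transport this picture to $G$ and to carry out a module computation. Let $\bar B$ and $\bar t$ be the images of $B$ and $t$ in $G$; then $\bar B$ is finitely generated, $\bar t^{-1} \bar B \bar t \subseteq \bar B$, and the character $\bar\chi \colon G \to \Z$ induced by $\chi$ satisfies $\bar\chi(\bar B) = 0$, $\bar\chi(\bar t) = 1$, whence $\ker \bar\chi = \bigcup_{n \ge 0} \bar t^{n} \bar B \bar t^{-n}$. Since $A = G' \subseteq \ker \bar\chi$ and $A$ is normal in $G$, the groups $A_n := A \cap \bar t^{n} \bar B \bar t^{-n}$ satisfy $A_n = \bar t^{n} A_0 \bar t^{-n}$ and $A = \bigcup_{n \ge 0} A_n$. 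Now $A_0 = A \cap \bar B$ is normal in the finitely generated group $\bar B$, with $\bar B / A_0 \incl G/A = Q$ finitely generated abelian, hence finitely presented; so $A_0$ is finitely generated as a normal subgroup of $\bar B$ by Proposition \ref{prp:Neumans-result}, and, being abelian, is finitely generated --- say by $y_1, \dots, y_r$ --- as a module over $\Z{Q_B}$, where $Q_B$ is the image of $\bar B$ in $Q$ and, crucially, $Q_B \subseteq \ker(\chi|_Q) \subseteq Q_\chi$. Writing $\tau$ for the image of $\bar t$ in $Q$, so that $\chi(\tau) = 1$ and hence $\tau \in Q_\chi$, and noting that conjugation by $\bar t^{n}$ on $A$ is multiplication by $\tau^{n}$, one finds $A = \bigcup_{n \ge 0} \tau^{n} A_0 = \sum_{i=1}^{r} R\, y_i$ with $R := \sum_{n \ge 0} \Z{Q_B}\, \tau^{n} \subseteq \Z{Q_\chi}$; thus $A$ is generated by $y_1, \dots, y_r$ over $\Z{Q_\chi}$. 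Therefore $[\chi] \in \Sigma_A(Q)$, contrary to the choice of $\chi$. This contradiction establishes $\Sigma_A(Q) \cup -\Sigma_A(Q) = S(Q)$, so $G$ is finitely presented by Theorem \ref{thm:Theorem-A}(ii).

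The step I expect to be delicate is the second one: extracting from \cite{BiSt78} the precise HNN description of $\Gamma$ along $\psi$ and making the ping-pong argument rigorous. This is the only point at which the hypotheses on $\Gamma$ enter, and in the $\FP_2$ case one must use the homological, rather than the geometric, form of that structure theorem. The polyhedrality input in the first step and the bookkeeping in the third step are routine within the circle of ideas of \cite{BiSt80} and \cite{BiGr84}.
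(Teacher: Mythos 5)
The paper does not actually prove this statement: Theorem \ref{thm:ThmB-BiSt80} is simply quoted as Theorem B of \cite{BiSt80}, so there is no internal argument to compare yours against; what follows is an assessment of your proof on its own terms. Your argument is correct, and it is in essence a reconstruction of the Bieri--Strebel route: verify the criterion of Theorem \ref{thm:Theorem-A}(ii) for $G=\Gamma/N$ by showing that a character $\chi$ with both $[\chi]$ and $[-\chi]$ outside $\Sigma_A(Q)$ leads to a contradiction. The core steps all check out: the HNN structure theorem of \cite{BiSt78} applied to $\psi=\chi\circ p\colon\Gamma\epi\Z$, the observation that if both associated subgroups were proper then $t$ and $btb^{-1}$ (with $b\notin B_1\cup B_2$) would generate a free group of rank $2$ --- your Britton's Lemma verification is sound, since the letters sandwiched between $t^{\pm1}$-pairs are always $b^{\pm1}$ and hence never lie in $B_1$ or $B_2$ --- and the transport to $G$, where Proposition \ref{prp:Neumans-result} applied to $A_0=A\cap\bar B$ (using that $\bar B/A_0$ embeds in $Q$ and is therefore finitely presented) exhibits $A$ as a finitely generated module over $\Z{Q_\chi}$, contradicting $[\chi]\notin\Sigma_A(Q)$. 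The one place where you use machinery not available to \cite{BiSt80} is the reduction to a discrete character: you invoke the polyhedrality theorem of \cite{BiGr84} to find a rational point in the non-empty set $\Sigma_A(Q)^{c}\cap\left(-\Sigma_A(Q)\right)^{c}$. This is legitimate --- a non-empty rationally defined spherical polyhedron does contain a rational direction, and after rescaling such a $\chi$ maps $Q$ onto $\Z$ because its image is a finitely generated subgroup of $\Q$ --- but it is a heavier (and anachronistic) input than the original 1980 argument, which handles arbitrary real characters directly; since the present paper already relies on \cite{BiGr84}, this is a stylistic rather than a logical defect. The remaining bookkeeping (that conjugation on the abelian group $A$ factors through $Q$, so $A_0$ is a finitely generated $\Z{Q_B}$-module with $Q_B\subseteq Q_\chi$, and that the chain $t^nBt^{-n}$ is ascending with union $\ker\psi$) is correct as written.
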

Here are two consequences of Theorem 
\ref{thm:ThmB-BiSt80}.

\subsection{Finitely presented soluble groups}
\label{ssec:Fp-soluble-group}
Well-known examples of finitely presented soluble groups 
are poly-cyclic groups, 
but there exists far bigger specimens, 
in particular groups with infinite torsion-free rank
\footnote{see, \eg \cite[p.\;422, Ex.\;1]{Rob96}
 for a definition of this notion}.
 The first such groups have been detected in the 70s by G. Baumslag 
 and, independently, by V. M. Remeslennikov (see \cite{Bau72}, \cite{Bau73}, \cite{Rem73a}); the groups studied by these authors were metabelian.
 Later on,
 non-metabelian, soluble matrix groups with infinite torsion-free rank 
 came to light (see, in particular, Theorem 1 in \cite{BGS86}).

\subsection{Groups of PL-homeomorphisms of the real line}
\label{ssec:PL-homeomorphism-groups}
Being soluble is a property which rules out
that the group in question does contain a non-abelian-free subgroup;
being a member of a variety of groups 
distinct from the variety of all groups 
has the same effect.
M. Brin and Squier establish in\cite{BrSq85} 
that groups of a quite different sort also enjoy this consequence,
namely  
the group $\PL_o(\R)$ of all increasing PL-homeomorphisms of the real line
\footnote{the number of break-points of each individual PL-homeomorphism is required to be finite}
and all its subgroups (see \cite[Thm.\;3.1]{BrSq85}).
The group $\PL_o(\R)$ is, of course, infinitely generated, 
but it contains finitely generated subgroups with finite presentations,
in particular R. Thompson's group $F$.
 \footnote{See \cite[§ 3]{CFP96} and the results
\cite[Thm.\;2.9]{BrSq85}, \cite[Prop.\;4.8]{Bro87a},
\cite[Thm:\;2.5]{Ste92},
as well as Proposition D13.7, Theorem D14.2 and  Proposition  D15.10 
in \cite{BiSt16}.}
%

\section{One relator groups}
\label{sec:One-relator-groups}
%
In this section, I discuss two classes of one relator groups
whose metabelian tops are finitely presented for some members of the class,
 but infinitely related for other members, 
 and for which there exists an algorithm 
that allows one,
given a particular member of the class,
 to decide whether the first or the second alternative holds. 

Let $\Gamma$ be a group with $m$ generators $x_1$, \ldots, $x_m$
and one defining relator $r$.
If $m = 1$, the group is cyclic; 
if $m > 2$, the metabelian top is infinitely related,
as will be shown in Section \ref{ssec:Deficiency-bigger-than-1}.
There remains the case where $m = 2$.
Then the metabelian top can be finitely related
and whether this is the case can be read of from the relator $r$.
Actually, 
two cases arise, 
depending on whether or not the relator is a product of commutators.
 %
\subsection{Relator is a product of commutators}
\label{ssec:One-relator-groups-relator-in-[F,F]}
Let $\Gamma$ be a group with generators $x$ and $y$, 
and a single defining relator $r$
that has exponent sum 0 with respect to $x$ and with respect to $y$.
Then $\Gamma_{\ab}$ is free abelian of rank 2 
and $(\Gamma')_{\ab}$ is a cyclic $\Z{G_{\ab}}$--module 
generated by the image of the commutator $[x,y]$ in $(G')_{\ab}$.
According to \cite[Lemma 1]{Str81a},
the annihilator ideal  of $\Gamma'_{\ab}$ is principal, 
and it is generated by an element $\lambda \in \Z{G_{\ab}}$ 
that can be obtained from the relator $r$ as follows.

Let $F$ be the free group on $\{x, y\}$,
let $D_x \colon  F \to \Z{F}$ denote the partial derivative with respect to $x$,
and let $\widehat{\phantom{-}} \colon \Z{F} \epi \Z{F_{\ab}}$ denote the canonical projection.
The element $\widehat{D_x(r)}$ 
is then a multiple of $1 - \hat{y}$
and 
\begin{equation}
\label{eq:Defining-annihilator}
\lambda = \widehat{D_x(r)}/(1- \hat{y})
\end{equation}
generates the annihilator  of the module $(\Gamma')_{\ab}$.
The following isomorphism of $\Gamma'_{\ab}$ thus holds:
\begin{equation}
\label{eq:Abelianized-commutator-subgroup}
A  = \Z{\Gamma_{\ab}}/(\Z{\Gamma}_{ab} \cdot \lambda) \iso \Gamma'_{ab},
\quad 1 +  (\Z{\Gamma}_{ab} \cdot \lambda) \longmapsto [x,y] \cdot \Gamma'' .
\end{equation}

Thanks to the theory developed in \cite{BiSt80}  and \cite[§ 5]{BiSt81a},
the knowledge of $\lambda$ allows one to determine 
whether or not $\Gamma/\Gamma''$ admits a finite presentation.
To describe the decision procedure, 
I need some additional notation.
Let $\vartheta \colon F_{\ab} \iso \Z^2$ be
the isomorphism of the free abelian group $F_{\ab}$ 
onto the standard lattice $\Z^2$  in the euclidean plane $\R^2$
which sends $x\cdot [F,F]$ to $(1,0)$ and $y\cdot [F,F]$ to $(0,1)$.
The element $\lambda$ is a linear combination of elements in $ Q = F_{\ab}$,
say $\lambda = \sum_q \lambda_q  \cdot q$. 
Set
\[
\supp(\lambda) = \{\vartheta(q) \mid \lambda_q \neq 0\}.
\] 
According to Theorem B in \cite{Str81a} 
the following characterization holds:
\begin{prp}
\label{prp:Characterisation-of-Gamma/Gamma''-fr-case-rank-2}
Assume $\supp(\lambda)$ is not a singleton.
Then its metabelian top $\Gamma/\Gamma''$ is finitely related if, 
and only if, $\supp(\lambda)$ has the following properties:
\begin{enumerate} [(i)]
\item $\supp(\lambda)$ does not lie on a straight line,
\item the convex polygon $\PP$ bounding the convex hull of $\supp(\lambda)$ has no parallel edges, and
\item for every edge $e$ of $\PP$ 
the vertex $v_e$ of $\PP$ with greatest distance from the straight line supporting the edge $e$ has coefficient $1$ or $-1$.
\end{enumerate}
\end{prp}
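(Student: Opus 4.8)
The plan is to translate the combinatorial conditions on $\supp(\lambda)$ into a description of $\Sigma_A(Q)$ and then apply Theorem \ref{thm:Theorem-A}(ii), exactly as announced in the paragraph preceding the statement. Recall that by \eqref{eq:Abelianized-commutator-subgroup} the module $A = \Z{\Gamma_{\ab}}/(\Z{\Gamma_{\ab}}\cdot\lambda)$ is a cyclic module with annihilator the principal ideal $(\lambda)$, so the centralizer $C(A)$ relevant to the Sigma-criterion (Lemma \ref{lem:Sigma-criterion}) is governed by $\lambda$: an element $\mu \in \Z{Q}$ acts as the identity on $A$ precisely when $\mu - 1 \in (\lambda)$. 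The first task is therefore to compute, for a character $\chi$ on $Q \cong \Z^2$, when there is such a $\mu$ whose support is mapped to strictly positive reals by $\chi$. The standard reduction (as in \cite[\S5]{BiSt81a} or \cite{Str81a}) is: $[\chi]\in\Sigma_A(Q)$ if and only if $\chi$ is strictly positive on some vertex of $\supp(\lambda)$ that is ``extreme in the $\chi$-direction'' and moreover that vertex has coefficient a unit, i.e. $\pm1$; this is where condition (iii) enters. More precisely, identifying $S(Q)$ with the unit circle $\s^1$, a character $\chi$ lies in $\Sigma_A(Q)$ iff the linear functional $\chi$ attains its maximum on $\supp(\lambda)$ at a single point $v$ with $\lambda_v = \pm1$ --- because then $q_v^{-1}\lambda$ is a unit plus terms on which $\chi$ is strictly negative, giving an element of $C(A)$ of the required form --- and, conversely, if the maximum is attained on an edge (two or more points) no such $\mu$ can exist.

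With this dictionary in hand, the second step is to read off $\Sigma_A(Q)\cup-\Sigma_A(Q)$ from the polygon $\PP$. The directions $\chi$ whose maximum on $\supp(\lambda)$ is attained at a single vertex $v$ form an open arc $U_v\subset\s^1$ (the interior of the outer normal cone at $v$); the directions where the maximum is attained along an edge $e$ are the two antipodal points given by the outer normal to $e$. Thus $S(Q)\setminus\bigcup_v U_v$ is exactly the finite set of edge-normal directions of $\PP$. Condition (i) (not collinear) guarantees $\PP$ is genuinely two-dimensional so that $S(Q) = \Hom(Q,\R)\setminus\{0\}$ is a circle and the cones $U_v$ are nonempty. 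Now $[\chi]\in\Sigma_A(Q)$ iff $\chi\in U_v$ for some vertex $v$ with $\lambda_v=\pm1$; call such a vertex \emph{good}. Condition (iii) says precisely that for every edge $e$, the vertex farthest from the line of $e$ --- which is the unique vertex $v$ such that $U_v$ contains the outer normal direction of $e$ on one side and lies opposite $e$ --- wait, more carefully: condition (iii) ensures that the good vertices' normal cones, together with their antipodes, cover everything \emph{except} possibly at the edge-normal directions, and it is at those directions that one must check the antipodal contribution. The claim then is: $\Sigma_A(Q)\cup -\Sigma_A(Q) = S(Q)$ iff for each edge $e$ of $\PP$ the antipode of the normal direction of $e$ is already in some $U_v$ with $v$ good --- and condition (ii) (no parallel edges) is exactly what makes the antipode of an edge-normal lie in the interior of a normal cone (rather than itself being an edge-normal), while condition (iii) makes that vertex good. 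Conversely, if any of (i)--(iii) fails one exhibits an edge-normal direction $[\chi]$ with both $[\chi]\notin\Sigma_A(Q)$ and $[-\chi]\notin\Sigma_A(Q)$: failure of (ii) puts $[-\chi]$ again on an edge; failure of (iii) makes the relevant opposite vertex have a non-unit coefficient; failure of (i) collapses the picture entirely (and the hypothesis that $\supp(\lambda)$ is not a singleton is what keeps us from the trivial polycyclic case).

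The final step is bookkeeping: having shown $\Sigma_A(Q)\cup-\Sigma_A(Q) = S(Q)$ is equivalent to (i)--(iii), invoke Theorem \ref{thm:Theorem-A}(ii) to conclude that $\Gamma/\Gamma''$ (which is the finitely generated metabelian group $G$ with $G' = A$, $G/G' = Q$) is finitely presented exactly under those conditions. One should also note at the outset that $\Gamma/\Gamma''$ really is the extension to which Theorem \ref{thm:Theorem-A} applies --- $\Gamma$ is finitely presented (two generators, one relator), hence $\Gamma/\Gamma''$ is finitely generated metabelian, $(\Gamma/\Gamma'')' = \Gamma'/\Gamma''$ is abelian and, by P. Hall's maximal condition, finitely generated as a $\Z{Q}$-module, with $\Sigma$ computed from $A$.

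I expect the main obstacle to be the careful case analysis at the edge-normal directions: these are exactly the finitely many points of $S(Q)$ where $\Sigma_A(Q)$ is ``missing'' by the interior-cone description, so membership of the \emph{antipode} in $\Sigma_A(Q)$ has to be decided there, and this is where (ii) and (iii) interact --- (ii) to ensure the antipodal direction falls strictly inside a vertex normal cone, (iii) to ensure that vertex carries a unit coefficient. Getting the quantifiers right (``for every edge $e$, the farthest vertex $v_e$ has coefficient $\pm1$'' versus the more naive ``every vertex has coefficient $\pm1$'') and matching them precisely to the covering condition $\Sigma\cup-\Sigma = S(Q)$ is the delicate point; the rest is the now-standard translation between the Sigma-criterion and Newton-polygon combinatorics, for which I would cite \cite{Str81a} and \cite[\S5]{BiSt81a}.
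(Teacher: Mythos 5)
The paper does not actually prove this proposition: it is quoted as Theorem B of \cite{Str81a}, so there is no internal argument to compare yours against. Your reconstruction follows the method of that reference and of \cite[\S 5]{BiSt81a} --- compute $\Sigma_A(Q)$ for the cyclic module $A=\Z{\Gamma_{\ab}}/(\Z{\Gamma_{\ab}}\cdot\lambda)$ via the Sigma-criterion (Lemma \ref{lem:Sigma-criterion}), translate membership in $\Sigma_A(Q)$ into a Newton-polygon condition on $\supp(\lambda)$, and test $\Sigma_A(Q)\cup-\Sigma_A(Q)=S(Q)$ against Theorem \ref{thm:Theorem-A}(ii) --- and that is the right approach. (One harmless slip: with the paper's convention $Q_\chi=\{q\mid\chi(q)\ge0\}$, the element $1\mp q_v^{-1}\lambda$ you construct is supported where $\chi$ is strictly \emph{negative}, so the vertex criterion should be stated for the $\chi$-\emph{minimal} point of $\supp(\lambda)$; since the target condition is antipodally symmetric this does not affect anything.)

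Two genuine gaps remain if this is to stand as a proof rather than a pointer to the literature. First, your dictionary ``$[\chi]\in\Sigma_A(Q)$ iff the extremum on $\supp(\lambda)$ is attained at a unique point with coefficient $\pm1$'' is argued only in the easy direction; the converse needs an argument (for instance: write $\mu-1=g\lambda$ and pass to leading forms with respect to $\chi$, which live in the domain $\Z[\ker\chi\cap Q]$; the leading form of $g\lambda$ must be $-1$, forcing the leading form of $\lambda$ to be a unit, i.e.\ a single monomial with coefficient $\pm1$), and your ``conversely'' clause excludes only the case where the extremum is attained on an edge, not the case of a unique extremal vertex with non-unit coefficient. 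Second, and more seriously, your covering argument reduces everything to the finitely many edge-normal directions, but if some vertex $v$ has $\lambda_v\ne\pm1$ then the whole open arc $U_v$ of directions extremised exactly at $v$ lies outside $\Sigma_A(Q)$, and one must show $-U_v\subseteq\Sigma_A(Q)$. Condition (iii) only guarantees that $-U_v$ contains no edge-normal direction and hence lies inside a single cone $U_w$; that this $w$ has coefficient $\pm1$ is not automatic. If $w$ were also bad, the same reasoning applied to $w$ would give $-U_v=U_w$, which forces the edges of $\PP$ at $v$ to be parallel to those at $w$ and is excluded only by condition (ii). This interaction of (ii) and (iii) away from the edge normals is the genuinely delicate point of the sufficiency direction, and your sketch asserts rather than proves the covering claim that hides it.
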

\begin{illustration}
\label{illustration:Baumslag-Boler-group}
On p.\;68 of his survey \cite{Bau74}, 
G. Baumslag discusses the group $\Gamma$ with generators $x$, $y$ 
and defining relator
\begin{equation}
\label{eq:Baumslag-Boler-relator}
r = \act{x} [y,x] \cdot \act{y}[x,y] \cdot [x,y] 
\end{equation}
The relator $r$ of this group is a product of commutators 
and so $\Gamma_{\ab}$ is free abelian of rank 2.
The partial derivative $D_x(r)$ of the relator $r$ is easily found:
\begin{align*}
D_x(r) 
&= 
D_x \left( 
(xyx y^{-1} x^{-2}) \cdot (y xy x^{-1} y^{-2}) \cdot (xyx^{-1} y^{-1}\right)\\
&=
\left(1 + xy - x[y,x] - \act{x}[y,x]\right) 
+
\act{x}[y,x] \cdot y \left( 1-xyx^{-1}\right) + \\
& \phantom{==}
\act{x} [y,x] \cdot \act{y}[x,y] \cdot ( 1-xyx^{-1}).
\end{align*}
The image of $D_x(r)$ in $\Z{F_{\ab}}$ simplifies to
\[
\widehat{D_x(r)} 
=
(1 + \hat{x} \hat{y}- \hat{x} - 1) + \hat{y}(1-\hat{y}) + (1- \hat{y}) 
= (1-\hat{y}) \cdot (1 - \hat{x} +\hat{y} )
\]
and so $\lambda = 1 - \hat{x} + \hat{y} $.
The support of $\lambda$ is thus  a triangle 
the vertices of which have coefficients $1$ or $-1$.
According to 
Proposition \ref{prp:Characterisation-of-Gamma/Gamma''-fr-case-rank-2}
the metabelian top of $\Gamma$ admits therefore a finite presentation.
\begin{remark}
\label{remark:Baumslag-Boler-group}
The abelianized derived group $\Gamma'_{\ab}$ of the preceding example 
is isomorphic to the abelian group underlying the module 
$\Z{\Gamma_{\ab}} / \Z{\Gamma_{\ab}} (1 - \hat{x} + \hat{y})$;
this group is free abelian of countable rank.
\end{remark}
\end{illustration}

\subsection{Relator is not a product of commutators}
\label{ssec:One-relator-groups-relator-not-in-[F,F]}
Let $\Gamma$ be a group with generators $x$ and $y$, 
and a single defining relator $r$ which is not a product of commutators.
The analysis of the metabelian top of such a group is,
in general, 
more complicated than that of the case treated before 
in view of the fact 
that the abelianized group may not be infinite cyclic,
but isomorphic to a group of the form $\Z \oplus \Z_d$ with $d > 1$.
The reader can find an investigation of this general case
in Sections 5 and 6 of \cite{BiSt78}.

In the special case where $\Gamma_{\ab}$ is infinite cyclic
the analysis simplifies considerably and becomes pleasant,
as I now show.
Let $F$ be the free group with basis $\{ x, y\}$,
let $w$ be a cyclically reduced word in $x$ and $y$,
and define $\Gamma$ to be the one relator group with the presentation
$\langle x, y \mid w \rangle$.
If $w$ consists of a single letter, the group $\Gamma$ is infinite cyclic,
a case that needs no further study.
Assume therefore that the length of $w$ is greater than 1
and consider the exponent sums $\sigma_x(w)$ and $\sigma_y(w)$ 
of $w$ with respect to $x$ and to $y$.
\footnote{By definition, $\sigma_x(w)$ is the difference of the number of letters $x$ minus the numbers of letters $x^{-1}$ in the word $w$.
The definition of $\sigma_y(w)$ is analogous.}
The abelianisation of $\Gamma$ is then isomorphic to
\[
\Z^2/\Z (\sigma_x(w),\sigma_y(w))
\]
and so it is infinite cyclic if, and only if, 
$\sigma_x(w)$ and $\sigma_y(w)$ are relatively prime.

Assume $\Gamma_{\ab}$ is infinite cyclic. 
According to Theorem 3.5 in Section 3.3 of \cite{MKS04} 
one can then find a set of free generator $\{a, t\}$ of $F$ 
such that $w$, 
when expressed as a word in $a$ and $t$, 
has exponent sum 1 with respect to $a$  
and exponent sum 0 with respect to $t$.

Consider now the well-known beginning of a $\Z{\Gamma}$-free resolution of the trivial $\Gamma$-module $\Z$ which goes back to Lyndon's paper \cite{Lyn50}. 
Using the generators $a$ and $t$ introduced before,
the end of the resolution has this form
\begin{equation}
\label{eq:Lyndons-resolution}
\xymatrixcolsep{4pc}%
\xymatrix{
\Z{\Gamma}  
\ar@{->}[r]^-{(D_a(r),D_t(r))}
&
\Z{\Gamma} \oplus \Z{\Gamma} 
\ar@{->}[r]^-{\left(\begin{smallmatrix}1-a\\1-t \end{smallmatrix}\right)} 
&
\Z{\Gamma} \ar@{->}[r]^-{\varepsilon} 
& 
\Z  \longrightarrow 0 
}
\end{equation}
We use this partial resolution to compute $\Gamma'_{\ab}$,
a group that is isomorphic to $H_1(\Gamma', \Z)$ or, by Shapiro's lemma,
isomorphic to $H_1(\Gamma, \Z{\Gamma_{\ab}})$.
Let 
$\hat{\phantom{x}} \colon \Gamma \epi \Gamma_{\ab}$ 
denote the canonical projection.
The group $\Gamma_{\ab}$ is infinite cyclic, 
generated by the image $\hat{t}$ of $t$,
and $\hat{a} = 1$.
By tensoring the exact complex \eqref{eq:Lyndons-resolution} with 
$\Z{\Gamma_{\ab}} \otimes_{\Z{\Gamma}} -$ one obtains, in particular, 
the complex
\begin{equation}
\label{eq:complex-deduced-from-resolution}
\xymatrixcolsep{4pc}%
\xymatrix{
\Z{\Gamma_{\ab}}  
\ar@{->}[r]^-{(\widehat{D_a(r)},\widehat{D_t(r)})}
&
\Z{\Gamma_{\ab}} \oplus \Z{\Gamma_{\ab}} 
\ar@{->}[r]^-{\left(\begin{smallmatrix}0\\1-\hat{t} \end{smallmatrix}\right)} 
&
\Z{\Gamma_{\ab}}
}
\end{equation}
It follows that the $\Z{\Gamma_{\ab}}$-module $\Gamma'/\Gamma''$
is isomorphic to
\begin{equation}
\label{eq:Structure-of-Gamma'-abelianized} 
A = \Z{\Gamma_{\ab}} / \left(\Z{\Gamma_{\ab}} \cdot \widehat{D_a(r)} \right).
\end{equation}
To simplify notion, 
I denote henceforth the infinite cyclic group $\Gamma_{\ab}$ by $C$,
write $c$ for its generator $\hat{t}$ 
and denote the group ring element $\widehat{D_a(r)}$  by $f(c)$.
Note that $f(c)$ is a Laurent polynomial with coefficients in $\Z$.

The metabelian top $\Gamma/\Gamma''$ of $\Gamma$
is the split extension of the $\Z{C}$-module $A = \Z{C}/(\Z{C} \cdot f(c))$ 
by the infinite cyclic group $C$. 
There remains the problem of finding out 
when this extension is finitely related.
Here is the answer: 
\begin{prp}
\label{prp:Characterization-of-fp-metabelian-groups}
Let $C$ be an infinite cyclic group generated by the element $c$
and let $A$ be the cyclic module $\Z{C}/ (\Z{C} \cdot f(c))$.
Then the following conditions are equivalent:
\begin{enumerate}[(i)]
\item The group $A \rtimes C$ admits a finite presentation;
\item the leading or the trailing coefficient of $f(c)$ is either 1 or $-1$.
\end{enumerate}
\end{prp}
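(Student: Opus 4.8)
The natural approach is to apply Theorem~\ref{thm:Theorem-A} to the metabelian group $G = A \rtimes C$, so everything reduces to understanding the geometric invariant $\Sigma_A(C)$ as a subset of the zero-sphere $S(C) = \{[\chi_+], [\chi_-]\}$, where $\chi_+$ sends $c$ to $1$ and $\chi_- = -\chi_+$. Since $A$ is the cyclic module $\Z{C}/(\Z{C}\cdot f(c))$, its centralizer $C(A)$ in $\Z{C}$ is precisely the ideal $\Z{C}\cdot f(c)$ together with $1$; more usefully, an element $\lambda \in \Z{C}$ centralizes $A$ if and only if $\lambda - 1 \in \Z{C}\cdot f(c)$, i.e. $\lambda \equiv 1 \pmod{f(c)}$. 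By the Sigma-Criterion (Lemma~\ref{lem:Sigma-criterion}), $[\chi_+] \in \Sigma_A(C)$ iff there is such a $\lambda$ all of whose support exponents are strictly positive, and symmetrically for $[\chi_-]$.

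The first key step is to translate ``$\lambda \equiv 1 \pmod{f(c)}$ with $\supp\lambda$ in the positive (resp.\ negative) half-line'' into a statement about the coefficients of $f$. Write $f(c) = a_d c^d + \cdots + a_\ell c^\ell$ with $a_d, a_\ell \neq 0$ ($d \geq \ell$), so $a_d$ is the leading and $a_\ell$ the trailing coefficient. After multiplying $f$ by a unit $\pm c^k$ — which changes neither $G$ up to isomorphism nor the truth of condition (ii) — one may normalise so that $\ell = 0$, i.e. $f(c) = a_d c^d + \cdots + a_1 c + a_0$ with $a_0 \neq 0$. I claim $[\chi_+] \in \Sigma_A(C)$ iff $a_d \in \{1,-1\}$. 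If $a_d = \pm 1$, then $f(c) - a_d^{-1}\cdot(\text{top term adjustment})$ — more precisely, one checks that $1$ is congruent mod $f$ to a polynomial supported on strictly positive powers of $c$: subtract an appropriate $\Z{C}$-multiple of $\pm f(c)$ from $1$ to cancel the constant term, using that the leading unit lets one ``push support to the right'' indefinitely (a division-algorithm argument in $\Z[c]$, valid exactly when the leading coefficient is a unit). Conversely, if $a_d \notin \{1,-1\}$ then no such $\lambda$ exists: reducing any $\lambda \equiv 1$ with positive support modulo $f$ and tracking the constant term mod the non-unit $a_0$ (respectively the leading behaviour mod $a_d$) yields a contradiction — this is where one invokes that a Laurent polynomial all of whose exponents are positive cannot represent $1$ in the quotient unless a suitable coefficient is invertible. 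The antipodal statement gives $[\chi_-] \in \Sigma_A(C)$ iff $a_0 \in \{1,-1\}$ (equivalently, after un-normalising, the trailing coefficient of the original $f$ is $\pm 1$).

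Granting the two computations, the conclusion is immediate: by Theorem~\ref{thm:Theorem-A}(ii), $G = A \rtimes C$ is finitely presented iff $\Sigma_A(C) \cup -\Sigma_A(C) = S(C)$, i.e.\ iff at least one of $[\chi_+], [\chi_-]$ lies in $\Sigma_A(C)$, which by the above holds iff the leading or the trailing coefficient of $f(c)$ is $\pm 1$. (One should note the degenerate case $f = 0$, where $A = \Z{C}$ is free of infinite rank, $G$ is free metabelian of rank $2$, hence infinitely related, consistent with (ii) failing; and the case $\deg f = 0$, i.e.\ $f$ a non-zero constant, where $A$ is finite and $G$ is polycyclic — here $f$ has leading $=$ trailing coefficient and (ii) holds iff that constant is $\pm 1$, iff $A$ is trivial, but in fact $G$ is finitely presented whenever $A$ is finite, so one must treat this case separately and observe that finite $A$ forces the relevant coefficient argument to go through trivially or handle it by hand.)

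**Main obstacle.** The delicate point is the reverse direction of the coefficient computation: showing that a \emph{non}-unit leading (or trailing) coefficient genuinely obstructs membership in $\Sigma_A(C)$. One must argue that no $\Z{C}$-combination $1 + \mu(c)f(c)$ can have all exponents positive, which amounts to a careful Newton-polygon or valuation argument on Laurent polynomials over $\Z$ — keeping track of both the lowest-degree and highest-degree terms under multiplication by $f$ and ruling out cancellation into the constant term. The forward direction (unit leading coefficient $\Rightarrow$ membership) is essentially the Euclidean division algorithm and should be routine; it is the obstruction side, together with the bookkeeping for the normalising unit and the edge cases $f=0$ and $\deg f = 0$, that will require the most care.
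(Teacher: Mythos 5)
Your route is correct but genuinely different from the paper's. The paper does not go through Theorem~\ref{thm:Theorem-A} at all: it invokes Theorem~C of \cite{BiSt78}, which characterises finite presentability of $A \rtimes C$ by the existence of a finitely generated subgroup $B \subseteq A$ turning $A \rtimes C$ into an ascending HNN-extension with stable letter $c$ or $c^{-1}$; it then constructs such a $B$ explicitly (the image of the span of the monomials $c^j$ with $\mu \leq j < M$) when the leading coefficient is a unit, and rules out any such $B$ by tracking degrees when it is not. Your reduction to computing $\Sigma_A(C)$ inside the two-point sphere $\s^0$, followed by Theorem~\ref{thm:Theorem-A}(ii), is a legitimate alternative (the two cited theorems are close relatives), and your final logical step is right: for a zero-sphere, $\Sigma_A(C) \cup -\Sigma_A(C) = S(C)$ precisely when $\Sigma_A(C) \neq \emptyset$. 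In rank one the Sigma-computation becomes the elementary statement that $\Z{C}/(\Z{C} \cdot f)$ is finitely generated over the non-negative (resp.\ non-positive) powers of $c$ iff the trailing (resp.\ leading) coefficient of $f$ is a unit, which is arguably cleaner than the paper's hands-on HNN construction.

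Three corrections to your execution. First, with the paper's convention $Q_\chi = \{q \mid \chi(q) \geq 0\}$ you have the pairing backwards: normalising $f = a_0 + a_1c + \cdots + a_dc^d$ with $a_0 \neq 0$, it is $[\chi_+]$ that lies in $\Sigma_A(C)$ iff the \emph{trailing} coefficient $a_0$ is a unit (take $\lambda = 1 - a_0^{-1}f$, which centralises $A$ and is supported on strictly positive powers), and $[\chi_-]$ iff the leading one is. This is harmless, since the conclusion only uses the symmetric statement. Second, your ``main obstacle'' is not delicate: if $1 + \mu f = g$ with $g$ supported on strictly positive powers, then $\mu f = g - 1$ has coefficient $-1$ at $c^0$ and no terms of negative degree; comparing lowest-degree terms forces the lowest term of $\mu$ to sit in degree $0$ and its coefficient $\mu_0$ to satisfy $\mu_0 a_0 = -1$, impossible when $a_0$ is not a unit. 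Third, and most substantively, your worry about $\deg f = 0$ rests on a false premise: for $f = a_0$ with $|a_0| > 1$ the module $A = \Z{C}/(a_0)$ is \emph{not} finite --- it is $\Z_{a_0}[c,c^{-1}]$, infinite --- and $G \cong \Z_{a_0} \wr C_\infty$ is infinitely related (condition (iii) of Theorem~\ref{thm:Infinitely-related-metabelian-top}), so there is no tension with the proposition and no separate treatment is needed; the lowest-term argument above already covers this case. Similarly, for $f = 0$ the group is $\Z \wr C_\infty$, not the free metabelian group of rank $2$ (whose abelianisation has rank $2$); it is infinitely related all the same, as you correctly conclude.
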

\begin{proof}
By \cite[Theorem C]{BiSt78}
the group  $ G= A \rtimes C$  admits a finite presentation
if, and only if, $G$ is an ascending HNN-extension with finitely generated base group $B \subseteq A$ and stable letter either $c$ or $c^{-1}$.

The element $f(c) \in \Z{C}$ is a Laurent polynomial.
Suppose $f(c)$ is not the zero element.
Then it has a representation of the form
\[
f(c) = a_\mu c^\mu + a_{\mu + 1} c ^{\mu + 1} + \cdots + a_M c^M
\quad \text{with}\quad
a_\mu \neq 0 \text { and } a_M \neq 0.
\]
Assume first that the leading coefficient $a_M$ of $f(c)$ is a unit in $\Z$
and let $\tilde{B}$ be the subgroup of $\Z{C}$ generated by the monomials
$c^j$ with $\mu \leq j < M$.
The element  $c^{M} - c_M \cdot f(c)$ lies then in $\tilde{B}$.
More generally,
every monomial $c^k$ with $k \geq M$ can be written in the form  
$c^k = g(c) \cdot f(c) + \tilde{b} $ for some $g(c) \in \Z{C}$ and 
$\tilde{b} \in \tilde{B}$.
Let $B$ denote the canonical image of $\tilde{B}$ 
in the module $A = \Z{C}/ (\Z{C} \cdot f(c))$.
The preceding calculations then show
that $B$ is a finitely generated subgroup 
that contains every coset $c^k + \Z{C} \cdot f(c)$ with $k \geq 0$,
and so $B$ is a finitely generated subgroup of $A$ with $c \cdot B \subset B$. 
It follows that 
\begin{equation}
\label{eq:Ascending-union-is-A}
B \subseteq c^{-1}  B \subset c^{-2} B \subseteq \cdots
\quad \text{ and } \quad
\bigcup\nolimits_{k \in \N} c^{-k}  B = A
\end{equation}
and so $A \rtimes C$ is an ascending HNN-extension with a finitely generated base group $B$ and stable letter $c^{-1}$.

Suppose now that the leading coefficient $a_M$ of $f(c)$ is not in $\{-1,0, 1\}$.
Consider a finitely generated subgroup $\tilde{B} \neq{0}$  of $\Z{C}$.
There exists then an upper bound, say $d$,  
for the exponents of the monomial $c^k$ 
that occur in the non-zero elements of $\tilde{B}$. 
Consider a linear combination of the form
$h(c) = c^{\ell} + g(c) \cdot f(c)$ with $\ell > d$.
The leading coefficient of $g(c) \cdot f(c)$ cannot be $-1$ 
and so $h(c)$ must involve a term $a_ k c^k$ 
with $a_k \neq 0$ and $k \geq \ell$.
Pass to the canonical image $B$ of $\tilde{B}$ in $A$.
Then $B$ does not contain any of the elements $c^{\ell} + \Z{C} \cdot f(c)$ 
with $\ell > d$.
This, however, implies that $B$ cannot be the base group of an ascending HNN-extension with stable letter $c^{-1}$.
Indeed, suppose this were possible.
Then one would have an ascending chain of  subgroups 
$ B \subseteq c^{-1} B \subseteq \cdots$  whose union is $A$. 
There would therefore exist a natural number $k_0$ 
such that $c^{d+1 } + \Z{C} \cdot f(c)$ lies in $c_{-k_0} \cdot B$
and so $c^{d+1+k_0} + \Z{C} \cdot f(c)$ would be an element of $B$,
contrary to what has been established before.

Taken together, the last two paragraphs show the following:
if $f(c)$ is not the zero polynomial 
then $A \rtimes C$ is an ascending HNN-extension 
with finitely generated base group and stable letter $c^{-1}$
if, and only if, the \emph{leading} coefficient of $f(c)$ is 1 or $-1$.
Similarly one sees that, provided $f(c) \neq 0$,
the group $A \rtimes C$ is an ascending HNN-extension 
with finitely generated base group and stable letter $c$
precisely if the trailing coefficient of $f(c)$ is 1 or $-1$.

One is left with the case where $f(c) = 0$.
Then $A$ is the group ring $\Z{C}$ and it is evident 
that no finitely generated non-zero subgroup $B$ of $A$ satisfies 
$B \subseteq cB$ or $B \subseteq c^{-1} B$.
\end{proof}
\begin{illustration}
\label{illustration:Characterization-of-fp-metabelian-groups}
Let $\Gamma$ be the one-relator group with generating set $\{a, t\}$ 
and defining relator
\begin{equation}
\label{eq:Defining-relator-one-relaor-group}%
r = t a^2 t^2 a t^{-3} a^{-2}.
\end{equation}
The exponents sums of this word are 1 with respect to $a$ and 0 with respect to $t$;
formulae \eqref{eq:Lyndons-resolution}
 through \eqref{eq:Structure-of-Gamma'-abelianized}  apply therefore.
 One finds that
 \[
 D_a(r) = t(1 + a) + ta^2 t^2 - t a^2 t^2 a t^{-3} (a^{-1} + a^{-2})
 \quad \text{and} \quad
 \widehat{D_a(r)} = 2 t + t^3 - 2;
 \]
Proposition \ref{prp:Characterization-of-fp-metabelian-groups}
 thus allows one to conclude 
 that the metabelian top $\Gamma/\Gamma''$ of $\Gamma$ is finitely related.
\end{illustration}

\section{Knot groups}
\label{sec:Knot-groups}
%
Suppose $\Gamma$ is a knot group and $f$ is its Alexander polynomial
(see, \eg \cite{BZ03} for unexplained terminology).
If the knot giving rise to $\Gamma$ can be chosen to lie in a plane 
and hence is unknotted,
the group $\Gamma$ is infinite cyclic;
otherwise the degree of the Alexander polynomial $f$ is positive 
and $\Gamma$ contains non-abelian free subgroups
(this follows, \eg from Theorem 1 in \cite{Neuw60}).

Now to the metabelian top of $\Gamma$.
There is a first pleasant result, due to H. F. Trotter,
which states  that
\emph{the group $\Gamma/\Gamma''$ is finitely related 
if, and only if, the abelian group $\Gamma'_{\ab}$ is finitely generated}
(see the last two lines of the introduction of \cite{Tro74}).
And an equally pleasing second result,
due to E. Strasser-Rapaport,
which asserts
that \emph{the the group $\Gamma'/\Gamma''$ is finitely generated precisely
when the leading coefficient of the Alexander polynomial is $1$ or $-1$}
(see \cite{Str60}).
Note that the abelian group $\Gamma'_{\ab}$ is always torsion-free 
(cf.\;\cite[Corollary 4.2]{Str74})
and of finite torsion-free rank (see \cite[Theorem 3]{Str60}).

The cited results lead to the following
\begin{prp}
\label{prp:Characterization-of-polycyclic-metabelian-top-for-knot-groups}
Assume $\Gamma$ is a knot group.
Then $\Gamma/\Gamma''$ is finitely related precisely it is is polycyclic, 
and this happens if, and only if, the leading coefficient of the Alexander polynomial is $1$ or $-1$.
\end{prp}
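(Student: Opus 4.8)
The plan is to stitch together the three results quoted immediately before the statement: Trotter's theorem (finite presentability of $\Gamma/\Gamma''$ is equivalent to finite generation of $\Gamma'_{\ab}$), Strasser-Rapaport's theorem (finite generation of $\Gamma'_{\ab} = \Gamma'/\Gamma''$ is equivalent to the leading coefficient of the Alexander polynomial being $\pm 1$), and the structural facts that $\Gamma'_{\ab}$ is torsion-free of finite torsion-free rank. The only genuinely new content is the interpolated clause ``precisely [when] it is polycyclic,'' so the real work is to show that, for a knot group, $\Gamma/\Gamma''$ being finitely related forces it to be polycyclic.

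First I would fix notation: set $Q = \Gamma_{\ab} \cong C_\infty$, the infinite cyclic group, and let $A = \Gamma'/\Gamma''$, viewed as a $\Z{Q}$-module via conjugation; by the discussion in Section~\ref{ssec:Properties-of-metabelian-groups} this is a finitely generated $\Z{Q}$-module, and $\Gamma/\Gamma''$ is the metabelian group realizing the extension of $A$ by $Q$. Now I would run the equivalences in a cycle. If $\Gamma/\Gamma''$ is finitely related, Trotter gives that $A$ is finitely generated as an abelian group; since $A$ is also torsion-free of finite rank, $A$ is free abelian of finite rank, hence $\Gamma/\Gamma''$ is (finitely generated free abelian)-by-(infinite cyclic), which is polycyclic. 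Conversely, if $\Gamma/\Gamma''$ is polycyclic then $A$, being a subgroup, is polycyclic and abelian, hence finitely generated, so Trotter's criterion (in its trivial direction, finite generation implies finite presentation) gives that $\Gamma/\Gamma''$ is finitely related. This closes the first biconditional. For the second biconditional, I would simply invoke Strasser-Rapaport: $A = \Gamma'/\Gamma''$ is finitely generated if and only if the leading coefficient of the Alexander polynomial is $1$ or $-1$; combined with the torsion-freeness and finite-rank facts this is exactly the condition that makes $A$ free abelian of finite rank, i.e.\ that makes $\Gamma/\Gamma''$ polycyclic.

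One could alternatively route the ``finitely related $\Rightarrow$ polycyclic'' direction through Theorem~\ref{thm:FR-implies-polycyclic}, since the Wirtinger presentation of a knot group sends every meridian generator to the same generator of $\Gamma_{\ab}$ and the relations are of commutator-like shape, so one might hope the inversion map $g \mapsto g^{-1}$ descends to $\Gamma_{\ab}$ compatibly with an automorphism of $\Gamma/\Gamma''$; but verifying that such an automorphism exists is fiddly and is not needed, so I would stick with the Trotter--Strasser-Rapaport route. I do not expect a serious obstacle here: all the heavy lifting is in the cited theorems, and the only subtlety is remembering to use torsion-freeness of $\Gamma'_{\ab}$ to upgrade ``finitely generated abelian'' to ``free abelian of finite rank,'' which is what identifies the finitely-related case with the polycyclic case rather than merely with a finitely-generated-module case. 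The write-up should therefore be short: state the module setup, cite Trotter, cite Strasser-Rapaport, insert the one-line torsion-free argument, and assemble the two biconditionals.
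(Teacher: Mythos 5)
Your proposal is correct and follows essentially the same route the paper takes: the paper derives the proposition directly by assembling Trotter's criterion, Strasser-Rapaport's theorem, and the torsion-freeness and finite rank of $\Gamma'_{\ab}$, exactly as you do. The one-line upgrade from ``finitely generated'' to ``free abelian of finite rank, hence polycyclic'' is the only glue needed, and you supply it correctly.
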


\section{Artin systems}
\label{sec:Artin-systems}
%
%
I begin by recalling the notion of an \emph{Artin system}.
Let $\Delta(S, E, \lambda)$ be a finite, labelled combinatorial graph,
with non-empty set of vertices $S$, 
set of edges  
\[
E \subset \{\{s,s'\}
\mid s \in S, s' \in S \text{ and } s \neq s'  \},
\]
and labelling function $\lambda \colon E \to (\N \setminus \{0,1\})$.
Enumerate the elements of $S$.
Use this enumeration to define, for every pair in $E$, 
a relation $R_e$ of a particular kind:
given elements $s_i$ and $s_j$ with $i < j$,
let $u(s_i, s_j)$ be the positive, alternating word in $s_i$ and $s_j$ 
that starts with $s_i$ and has length $\lambda (e)$, 
so $u(s_i, s_j)= s_i s_j s_i \cdots$,
and let $u(s_j, s_i)$ be the word that arises from  $u(s_i, s_j)$ by exchanging the letters $s_i$ and $s_j$; thus $u(s_j, s_i) = s_j s_i s_j \cdots$.
Define $R_e$ to be the relation
\begin{equation}
\label{eq:Relation-R_e}
u(s_i,s_j)  = u(s_j, s_i)
\end{equation}
and let $\Gamma$ be the quotient of the free group on $S$ 
by the congruence relation generated by the relations $R_e$.

\begin{definition}
\label{definition_Artin-system}
The Artin system $(\Gamma, \Delta(S, E, \lambda))$ consists 
of the group $\Gamma$, called \emph{Artin group}, and the graph $\Delta$
with vertex set $S$, edge set $E$ and labelling function $\lambda$,
and, for each $e  = \{s_i, s_j\} \in E$ with $i < j$,
the defining relation $u(s_i,s_j) = u(s_j,s_i)$.
\end{definition}
 
In this section, various classes of Artin systems will be studied.
The proofs of some results will be based on a consequence of 
Theorem \ref{thm:FR-implies-polycyclic},
namely Theorem \ref{thm:Metabelian-top-of-Artin-group} below.
The study of Artin systems will be continued in 
Section \ref{sec:Artin-systems-of-finite-type}
with a closer look at a special class of Artin systems.
%
\subsection{Inversion $\iota$ of an Artin system}
\label{ssec:Inversion-admitted-by-Artin-systems}
I begin with a basic result about Artin systems.
Let  $(\Gamma, \Delta(S, E, \lambda))$ be such a system.
Then the assignments $s \mapsto s^{-1}$  for $s \in S$
extend to an automorphism $\iota$ of $\Gamma$.

Indeed, let $F = F(S)$ be the free group on the finite set $S$
ad let $\tilde{\iota} \colon F \iso F$ be the automorphism 
which extends the assignments $s \mapsto s^{-1}$ for $s \in S$.
I claim that the automorphism $\tilde{\iota} \colon F \iso F$ 
maps every defining relation of $\Gamma$ to a consequence of the very same defining relation.
The defining relations of $\Gamma$ are of two forms, 
depending on whether the label of $\{s, s'\}$ is even or odd.
Consider first a relation where the label of $\{s,s'\}$ is even.
The calculations
\begin{align*}
\tilde{\iota}((s s')^m) &=  (s^{-1} (s')^{-1})^m = (s' s)^{-m},\\
\tilde{\iota}((s' s)^m) &=  ((s')^{-1} s^{-1} )^m = (s s')^{-m}
\end{align*}
show
that the relation $\iota((s s')^m) = \iota((s' s)^m) $ 
is a consequence of the defining relation $(s s')^m = (s' s)^m$. 
Consider next a relation with odd label.
Then
\begin{align*}
\tilde{\iota}((s s')^m s) &=  (s^{-1} (s')^{-1})^m s^{-1} = \left((s s')^{m} s\right)^{-1}, \\
\tilde{\iota}((s' s)^m s') 
&=  
((s')^{-1} s^{-1} )^m (s')^{-1} =\left((s' s)^{m} s'\right)^{-1}
\end{align*}
and so the relation $\iota((s s')^m s)  = \iota((s' s)^m s')$ is a consequence of 
the defining relation $(s s')^m s =  (s's)^m s'$.

It follows that the group $\Gamma$  admits an automorphism $\iota$ 
which sends each element $s \in S$ to its inverse.
This automorphism induces, of course, 
an automorphism $\iota_2 \colon G \iso G$ 
of the metabelian top $G$  of $\Gamma$
and an automorphism $\iota_1 \colon \Gamma_{\ab} \iso \Gamma_{\ab}$ of the abelianisation of $\Gamma$. 
This latter automorphism sends 
each element of $\Gamma_{\ab}$ to its inverse;
in additive notation, it is thus $-\id$.
So Theorem \ref{thm:FR-implies-polycyclic} applies 
and leads to the next result:
 \begin{thm}
\label{thm:Metabelian-top-of-Artin-group}
Let $(\Gamma, \Delta(S, E, \lambda))$ be an Artin system
and let $G = \Gamma/\Gamma''$ be the met\-abelian top of the group $\Gamma$.
Then $G$ is finitely related if, and only if, it is polycyclic.
\end{thm}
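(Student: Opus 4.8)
The plan is to deduce Theorem~\ref{thm:Metabelian-top-of-Artin-group} directly from Theorem~\ref{thm:FR-implies-polycyclic}, which has been set up precisely for this purpose. Recall that Theorem~\ref{thm:FR-implies-polycyclic} says: if a finitely generated metabelian group $G$ admits an automorphism inducing $gG' \mapsto g^{-1}G'$ on $G_{\ab}$, then $G$ is finitely related if and only if it is polycyclic. So the entire task reduces to two things: checking that $G = \Gamma/\Gamma''$ is finitely generated metabelian, and exhibiting the required automorphism on $G$.

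First I would note that $G$ is finitely generated since $\Gamma$ is (an Artin group is finitely generated by its standard generating set $S$), and $G$ is metabelian by construction as the quotient $\Gamma/\Gamma''$. Second, and this is the heart of the matter, I would produce the automorphism $\iota$ of $G$ inducing $-\id$ on $G_{\ab}$. This is exactly what the preceding subsection (Section~\ref{ssec:Inversion-admitted-by-Artin-systems}) establishes: the assignment $s \mapsto s^{-1}$ for $s \in S$ extends to an automorphism of $F(S)$ that carries each defining relator of the Artin presentation to a consequence of the defining relators (the even-label and odd-label computations given there), hence descends to an automorphism of $\Gamma$, and then further to an automorphism $\iota_2$ of $G = \Gamma/\Gamma''$. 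Since this automorphism sends every generator $s$ to $s^{-1}$, the induced map on $\Gamma_{\ab}$ (equivalently on $G_{\ab} = \Gamma_{\ab}$, as the abelianisation of $\Gamma$ and of its metabelian top coincide) inverts every element, i.e.\ is $-\id$ in additive notation.

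Having checked both hypotheses, I would invoke Theorem~\ref{thm:FR-implies-polycyclic} with $G = \Gamma/\Gamma''$ and $\iota = \iota_2$ to conclude that $G$ is finitely related if and only if it is polycyclic, which is the assertion. I do not expect any serious obstacle here: the only content beyond citing Theorem~\ref{thm:FR-implies-polycyclic} is the verification that the inversion map is an automorphism of the Artin group, and that verification has already been carried out in the immediately preceding paragraphs. The mild point to be careful about is the passage from an automorphism of $\Gamma$ to an automorphism of $G$ and the identification of its effect on $G_{\ab}$ with $-\id$ — but since $\Gamma''$ is characteristic in $\Gamma$, any automorphism of $\Gamma$ descends to $G$, and the abelianisation is unaffected by killing $\Gamma''$, so nothing delicate occurs. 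The proof is therefore essentially a two-line deduction once the groundwork of Section~\ref{ssec:Inversion-admitted-by-Artin-systems} is in place.
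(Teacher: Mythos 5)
Your proposal is correct and follows exactly the paper's own route: the paper also obtains the theorem by applying Theorem~\ref{thm:FR-implies-polycyclic} to the metabelian top, using the inversion automorphism $s \mapsto s^{-1}$ constructed in Section~\ref{ssec:Inversion-admitted-by-Artin-systems}, which descends to $G = \Gamma/\Gamma''$ and induces $-\id$ on the abelianisation. Nothing is missing.
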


\begin{remark}
\label{remark:Metabelian-top-of-Artin-group}
The conclusion of Theorem \ref{thm:Metabelian-top-of-Artin-group}
is analogous to that of Trotter's result, 
mentioned in Section \ref{sec:Knot-groups}.
\end{remark} 

In view of Theorem \ref{thm:Metabelian-top-of-Artin-group}
the problem of finding out 
whether the metabelian top $G$ of an Artin group is finitely related 
reduces to the question whether $G$ is polycyclic or, equivalently,
whether the abelian group $\Gamma'_{\ab}$ is finitely generated.
In sections \ref{ssec:Two-generator-Artin-systems}, 
\ref{ssec:Artin-groups-many-odd-labels}
and in Section \ref{sec:Artin-systems-of-finite-type},
various classes of Artin systems will be discussed
for which I have been able to determine
whether or not their metabelian tops are polycyclic.
In section \ref{ssec:Generalised-Artin-systems}, finally, 
I study a class of generalized Artin groups 
all whose metabelian tops are polycyclic.

\begin{remark}
\label{remark:Subgroups-of-Artin-groups}
An Artin system $(\Gamma, \Delta(S, E, \lambda))$ is,
by definition, 
given by a presentation with generating set $S$ 
and a set of relations $\RR$ of a very particular kind,
one particularity being
that every relation involves exactly two generators in $S$.
If $S_1$ is  a non-empty subset of $S$ 
there is thus a well-defined subset $\RR_1$ of $\RR$ made up of all the relations that involve only generators in $S_1$.
The sets $S_1$ and $\RR_1$ define then an Artin system $(\Gamma_1, S_1)$.
In addition, 
there is a homomorphism $\can_{S_1 \incl S} \colon\Gamma_1 \to \Gamma$
induced by the inclusion of $S_1$ into $S$.
\emph{This homomorphism is injective}; see \cite[Theorem 3.1]{Par97b}.
In the sequel, 
I shall use this important result to simplify the wording in several statements or proofs;
one could avoid making use of the result 
at the expense of inserting the word \emph{canonical} at appropriate places.
\end{remark}

%
\subsection{Artin systems with two generators}
\label{ssec:Two-generator-Artin-systems}
If the Artin system $(\Gamma, \Delta(S, E, \lambda))$ is generated by two elements,
 say by $s_1$ and by $s_2$,
 the group is a one-relator group. 
 So the results of Section \ref{sec:One-relator-groups} apply
 and allow one to determine 
 whether the metabelian top admits a finite presentation.
 In the present case, however, the relator is of a very simple form 
and so the answer simplifies.
Two cases arise, 
depending on whether the label of the edge $\{s_1, s_2\}$ is even or odd.

\emph{Assume first the label is even}, say $2m$ with $m > 0$.
The defining relation of $\Gamma$ has then the form 
$(s_1 s_2)^m  = (s_2s_1)^m$ 
and this relation is equivalent to the relator
\begin{equation}
\label{eq:Artin-relator-with-even-label}
r = (s_1 s_2)^m \cdot (s_2s_1)^{-m} 
= 
\left(s_1 s_2\right)^m \cdot \left(s_1^{-1} s_2^{-1} \right)^m .
\end{equation}
The reasoning at the beginning of section 
\ref{ssec:One-relator-groups-relator-in-[F,F]}
allows one to determine the structure of the abelian group 
underlying the $\Z{\Gamma_{\ab}}$-module  $\Gamma'_{\ab}$.
To find it, 
one calculates the image of the partial derivative $D_{s_1}(r)$ 
under the canonical projection 
$\widehat{\phantom{-}} \colon \Z{F} \epi \Z{F_{\ab}}$.
One finds first of all:
\begin{equation*}
D_{s_1}(r)
= 
\left(1 + s_1 s_2 + \cdots + (s_1 s_2)^{m-1} \right)
-
(s_1 s_2)^m \cdot \left(s_1^{-1} + \cdots  + (s_1^{-1} s_2^{-1})^{m-1} s_1^{-1}\right)
\end{equation*}
Set $t_1 = \hat{s}_1$ and $t_2 = \widehat{s}_2$.
The image of $D_{s_1}(r)$ under the canonical projection is then:
\begin{align*}
\widehat{D_{s_1}(r)}
&=
\left(1 + t_1t_2 + \cdots + (t_1 t_2)^{m-1} \right) 
-
\left((t_1 t_2)^{m-1} t_2+ \cdots + (t_1 t_2) t_2 + t_2\right)\\
&= 
\left(1 + t_1t_2+ \cdots + (t_1 t_2)^{m-1}\right) \cdot  (1-t_2).
\end{align*}

By formula \eqref{eq:Abelianized-commutator-subgroup}
the $\Z{\Gamma_{\ab}}$-module $\Gamma'_{\ab}$
is thus isomorphic to
\begin{equation}
\label{eq:Module-Artin-group-with-2-generators-Case1}
A= \Z{\Gamma_{\ab}}/ (\Z{\Gamma_{\ab}} \cdot \hat{\lambda}) 
\quad\text{with}\quad
\hat{\lambda} = 1 + t_1t_2 + \cdots + (t_1t_2)^{m-1}.
\end{equation}
Two cases now arise.
If $m = 1$, then $\hat{\lambda} = 1$ and $A$ is reduced to 0, 
in accordance with the fact 
that $G = \Gamma/\Gamma''$ is abelian.
If $m > 1$
the abelian group underlying the module $A$ 
is free abelian of countably infinite rank.
Indeed,
let $U$ be the subgroup of $\Gamma_{\ab}$ 
generated by the element of $t_1\cdot t_2$.
The additive group of the ring $\Z{U}/(\Z{U} \cdot \hat{\lambda})$ 
is free abelian of rank $m-1$.
On the other hand, 
$U$ is a direct summand of  $\Gamma_{\ab}$ 
and $\Gamma_{\ab}/U$ is infinite cyclic.
The abelian group underlying $A$ is thus free abelian of countable rank.
In view of Theorem \ref{thm:Metabelian-top-of-Artin-group}
this fact forces the metabelian top of $\Gamma $ to be infinitely related.

The previous reasoning establishes
 \begin{prp}
\label{prp:Two-generator-Artin-groups-with-even-label}
Let $(\Gamma, \Delta(S, E, \lambda))$ be the Artin system with standard generators $s_1$ and $s_2$,
label $\lambda(\{s_1,s_2\}) = 2m$ 
and defining relator \eqref{eq:Artin-relator-with-even-label}. 
If $m =1$, the group $\Gamma$ is abelian;
if, on the other hand, $m > 1$, 
the abelian group $\Gamma'_{\ab}$ is free abelian of countable rank
and the metabelian top of $\Gamma$ is infinitely related. 
\end{prp}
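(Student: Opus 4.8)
The plan is to assemble the computation already carried out above for two-generator Artin systems with even label and then to invoke Theorem \ref{thm:Metabelian-top-of-Artin-group}.

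First I would dispose of the case $m = 1$. Here the defining relator \eqref{eq:Artin-relator-with-even-label} is the commutator $s_1 s_2 s_1^{-1} s_2^{-1}$, so $\Gamma$ has the presentation $\langle s_1, s_2 \mid [s_1, s_2] \rangle$ and is free abelian of rank $2$; in particular it is abelian, and its metabelian top, which is $\Gamma$ itself, is polycyclic and finitely presented.

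Now assume $m > 1$. Since the relator \eqref{eq:Artin-relator-with-even-label} has exponent sum $0$ with respect to both $s_1$ and $s_2$, it is a product of commutators, and the reasoning at the beginning of Section \ref{ssec:One-relator-groups-relator-in-[F,F]} applies: $\Gamma_{\ab}$ is free abelian of rank $2$, and the Fox-derivative calculation displayed above, combined with \eqref{eq:Abelianized-commutator-subgroup}, gives the isomorphism of $\Z{\Gamma_{\ab}}$-modules \eqref{eq:Module-Artin-group-with-2-generators-Case1}, namely $\Gamma'_{\ab} \cong A = \Z{\Gamma_{\ab}}/(\Z{\Gamma_{\ab}} \cdot \hat{\lambda})$ with $\hat{\lambda} = 1 + t_1 t_2 + \dots + (t_1 t_2)^{m-1}$ and $t_i = \hat{s}_i$. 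It then remains to read off the abelian group underlying $A$. Put $u = t_1 t_2$ and $U = \langle u \rangle \le \Gamma_{\ab}$; since $\{t_1, u\}$ is again a basis of the free abelian group $\Gamma_{\ab}$, one has $\Gamma_{\ab} = U \times \langle t_1 \rangle$, so $\Z{\Gamma_{\ab}}$ is free as a $\Z{U}$-module on the powers $t_1^k$, $k \in \Z$; and since $\hat{\lambda} \in \Z{U}$, this descends to
\[
A \;\cong\; \bigoplus_{k \in \Z} \bigl( \Z{U}/(\Z{U} \cdot \hat{\lambda}) \bigr)\, t_1^k .
\]
Finally one checks that $\Z{U}/(\Z{U} \cdot \hat{\lambda})$ is free abelian of rank $m - 1$: from $u^m - 1 = (u - 1)\hat{\lambda}$ one gets $u^m = 1$ in this quotient, so $u$ becomes a unit and the quotient equals $\Z[u]/(\hat{\lambda})$, which has $\{1, u, \dots, u^{m-2}\}$ as a $\Z$-basis. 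Hence $A$ is a countable direct sum of free abelian groups of rank $m - 1 \ge 1$, and so is free abelian of countably infinite rank, as asserted.

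The conclusion is then immediate: $\Gamma'/\Gamma'' = \Gamma'_{\ab} \cong A$ is a subgroup of the metabelian top $G = \Gamma/\Gamma''$ and is not finitely generated, so $G$ is not polycyclic, whence Theorem \ref{thm:Metabelian-top-of-Artin-group} forces $G$ to be infinitely related. Almost everything here is routine; the only point that needs some care is the structure of $\Z{U}/(\Z{U} \cdot \hat{\lambda})$ as an abelian group, together with the reduction to $\Z[u]/(\hat{\lambda})$ and the compatible splitting of $A$ over the complementary infinite cyclic factor — but these all follow quickly once the identity $u^m - 1 = (u - 1)\hat{\lambda}$ has been observed.
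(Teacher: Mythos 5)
Your proof is correct and follows essentially the same route as the paper: compute $\hat{\lambda}=1+t_1t_2+\cdots+(t_1t_2)^{m-1}$ via the Fox derivative, split $\Z{\Gamma_{\ab}}$ over $\Z{U}$ with $U=\langle t_1t_2\rangle$ a direct summand with infinite cyclic complement, identify $\Z{U}/(\Z{U}\cdot\hat{\lambda})$ as free abelian of rank $m-1$, and conclude via Theorem \ref{thm:Metabelian-top-of-Artin-group}. Your only addition is to make explicit, through the identity $u^m-1=(u-1)\hat{\lambda}$, why the Laurent-polynomial quotient reduces to $\Z[u]/(\hat{\lambda})$ — a detail the paper merely asserts.
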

\smallskip

\emph{Assume now that the label is odd}, say $2m +1$ with $m \geq 1$.
The defining relation of $\Gamma$ is then
\begin{equation}
\label{eq:Artin-relation-with-odd-label}
(s_1 s_2)^m \cdot s_1 = s_2 \cdot  (s_1  s_2)^m.
\end{equation}
The letter $s_1$ occurs $m+1$ times on the left hand side of this relation
and $m$ times on its right side,
while the letter $s_2$ occurs $m$ times on the left
and $m+1$ times on the right.
It follows that $s_1$ and $s_2$ define the same element in $\Gamma_{\ab}$
and that $\Gamma_{\ab}$ is infinite cyclic.
The next result shows that
 the group $\Gamma'$ is finitely generated.
\begin{prp}
\label{prp:Two-generator-Artin-groups-with-odd-label}
Let $(\Gamma, \Delta(S, E, \lambda))$ be the Artin system with standard generators $s_1$ and $s_2$,
label $\lambda(\{s_1,s_2\}) = 2m + 1$ 
and defining relation \eqref{eq:Artin-relation-with-odd-label}. 
Then the derived group of $\Gamma$ is a free group of rank $2m$.
\end{prp}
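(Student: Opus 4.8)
The plan is to realise $\Gamma$ as an ascending HNN-extension (in fact a semidirect product) and read off the derived group directly from the Reidemeister--Schreier data, exploiting the fact that $\Gamma_{\ab}$ is infinite cyclic. First I would introduce a change of generators: since $s_1$ and $s_2$ have the same image in $\Gamma_{\ab}$, set $t = s_1$ and $a = s_2 s_1^{-1}$, so that $\{a,t\}$ is a free basis of the ambient free group $F$, the image of $t$ in $\Gamma_{\ab}\cong C_\infty$ is a generator, and $a$ lies in the kernel of $\Gamma\epi\Gamma_{\ab}$. Rewriting the defining relation \eqref{eq:Artin-relation-with-odd-label} in terms of $a$ and $t$ expresses it as a relation of the form $w(a,t) = 1$ with $\sigma_t(w)=0$, so $\Gamma$ becomes an HNN-extension with stable letter $t$ over a free base group freely generated by finitely many $t$-conjugates of $a$.

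The key computation is to carry out this rewriting explicitly and identify the base group. The relation $(s_1 s_2)^m s_1 = s_2 (s_1 s_2)^m$ can be rearranged as $(s_1 s_2)^m s_1 s_2 = s_2 (s_1 s_2)^m s_2$, i.e. $(s_1 s_2)^{m+1} = s_2 (s_1 s_2)^m s_2$; substituting $s_1 = t$, $s_2 = at$ and collecting the occurrences of $t$ one obtains a relation that, when conjugated through by powers of $t$, takes the shape $a_{k} = (\text{word in } a_0,\dots,a_{k-1})$ for a suitable index $k$, where $a_j := t^{j} a t^{-j}$. This exhibits $\ker(\Gamma\epi C_\infty)$ as the ascending union of the subgroups generated by $a_0,\dots,a_{N}$ and shows that in fact the relation lets one eliminate generators so that the kernel is \emph{freely} generated by exactly $2m$ of the $a_j$'s; since the kernel of the abelianisation map is precisely the derived group $\Gamma'$ (because $\Gamma_{\ab}$ is cyclic, hence $\Gamma' = \ker(\Gamma\epi\Gamma_{\ab})$), this is the claim. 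An alternative, cleaner route is to observe that $\Gamma$ is the fundamental group of the mapping torus of a free-group endomorphism: after the change of basis one sees $\Gamma = F_{2m}\rtimes_\phi C_\infty$ where $\phi$ is an \emph{automorphism} of the free group $F_{2m}$ of rank $2m$, so that $\Gamma' = F_{2m}$ is free of rank $2m$.

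The main obstacle is the bookkeeping in the rewriting step: one must verify carefully that the single Artin relation of length $4m+2$, once transported to the generators $a,t$ and cyclically normalised, produces exactly $2m$ independent relations among the $t$-translates $a_j$ (equivalently, that the associated square matrix over $\Z[t,t^{-1}]$ coming from $\widehat{D_a(r)}$ has the right degree span and unit leading and trailing coefficients, so that the base group is finitely generated and free of the asserted rank). I would handle this by computing the Fox derivative $\widehat{D_a(r)}\in\Z[t,t^{-1}]$ as in Section~\ref{ssec:One-relator-groups-relator-not-in-[F,F]}, checking it is a polynomial of degree $2m$ with leading and trailing coefficients $\pm1$ (so $\Gamma'_{\ab}$ is free abelian of rank $2m$ by Proposition~\ref{prp:Characterization-of-fp-metabelian-groups}), and then upgrading this homological statement to the group-level assertion by noting that the Stallings--Bieri--Strebel picture forces $\Gamma'$ itself to be finitely generated; a finitely generated normal subgroup of a one-relator group with infinite cyclic quotient is free (it is a subgroup of a one-relator group that is locally free by the Magnus/Karrass--Solitar theory, being an ascending union of free factors), and its rank is computed by the Euler characteristic $\chi(\Gamma)=0$ together with $\chi(\Gamma') = 1-\operatorname{rk}(\Gamma')$, giving $\operatorname{rk}(\Gamma') = 2m$ after checking the base group explicitly is $F_{2m}$.
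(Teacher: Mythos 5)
Your main line of argument --- passing to the basis $t=s_1$, $a=s_2s_1^{-1}$, rewriting the single Artin relation as a word in the conjugates $a_j=t^jat^{-j}$, and observing that the extreme indices $0$ and $2m$ each occur exactly once so that the kernel of $\Gamma\to\Z$ is freely generated by $a_0,\dots,a_{2m-1}$ --- is precisely the paper's proof. The only caveat concerns your fallback: the Euler-characteristic step cannot work as stated, since $\chi(F_k\rtimes C_\infty)=0$ for every $k$ and so gives no information on the rank; but that aside is dispensable, because the rank is already read off from the explicit rewriting (or, once $\Gamma'$ is known to be free, from the rank of $\Gamma'_{\ab}$ computed via $\widehat{D_a(r)}$).
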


\begin{proof}
To simplify notation, set $s = s_1$ and $s' = s_2$.
The abelianisation of $\Gamma$ is infinite cyclic, 
and the derived group of $\Gamma$ is generated 
by the conjugates of  $ a = s_2 \cdot s_1^{-1} = s' \cdot s^{-1} $ by the powers of $s$.
Notice that $s' = as$. 
Set  $v_1 = s(s's)^m $ and $v_2 = (s's)^m s'$.
Then
\begin{align*}
v_1 &= s \cdot  (s's)^m =  s\cdot (as^2)^m
= 
\act{s} a \cdot \act{s^3} a \cdots \act{s^{2m-1}}a \cdot s^{2m+1}\\
v_2 
&= 
(s's)^m\cdot  s' = (as^2)^m \cdot as 
= 
a \cdot  \act{s^2} a \cdots  \act{s^{2m}}  a \cdot s^{2m+1}
\end{align*}
and so 
\[
v_1 \cdot v_2^{-1} 
= 
\left(\act{s} a \act{s^3} u \cdots \act{s^{2m-1}}a \right)
\cdot 
\left(
\act{s^{2m}}a^{-1} 
\cdot 
\act{s^{2m-2}}a^{-1} 
\cdots 
\act{s^2}a^{-1} \cdot a^{-1}\right)
\]
Upon setting $a_n =  s^n a s^{-n}$ 
the relator $r_0 = v_1 \cdot v_2^{-1} $ can be written in the form
\begin{equation}
\label{eq:Relator-r0}
r_0 
=
\left(a_1 \cdot  a_3 \cdots a_{2m-1} \right)
\cdot 
\left(a_{2m}^{-1} \cdot a_{2m-2}^{-1} \cdots a_2^{-1} \cdot a_0^{-1}\right)
 \end{equation}
 The derived group of $\Gamma$,
 has therefore a presentation with  $(a_n)_{n \in \Z}$  as the set of generators
 and with the conjugates of $r_0$ by the powers of $s$ as the defining relators.
 It follows that
 the derived group $\Gamma'$ is a free group of rank $2m$, 
 freely generated by the canonical images of  $u_0$, $u_1$, \ldots, $u_{2m-1}$ in $\Gamma'$.
 \end{proof}
 %
 %
\subsection{Artin groups with many odd labels}
\label{ssec:Artin-groups-many-odd-labels}
By allying Theorem \ref{thm:Groups-with-infinite-cyclic-abelianisation}
and Proposition \ref{prp:Two-generator-Artin-groups-with-odd-label}
one can show 
that the derived groups of many Artin groups are finitely generated. 
In order to describe these groups, 
I need a definition.

Let $(\Gamma, \Delta(S, E, \lambda))$ be an Artin system.
The function $\lambda \colon E \to \left(\N \setminus \{0,1\}\right)$
allows one to divide the set of edges $E$ into two disjoint subsets,
defined by
\begin{equation}
\label{eq:Definition-Eodd-Eeven}
E_{\odd} =  \{e \in E \mid \lambda (e) \in 2\N + 1\}
\quad \text {and }\quad
E_{\even} =  \{e \in E \mid \lambda (e) \in 2\N\}.
\end{equation}

The announced result is now this:
\begin{prp}
\label{prp:Artin-systems-with-infinite-cyclic-abelianisation}
Let $\Gamma, \Delta(S, E, \lambda))$ is an Artin system
and assume the subgraph $(S, E_{\odd})$ is connected.
Choose a spanning tree $E_0$ of $\Delta$ 
inside the subgraph $(S, E_{\odd})$
and set
\begin{equation}
\label{eq:Bound-for-number-of-generators}
f = \sum\nolimits_{e \in E_0} \left( \lambda(e) -1\right).
\end{equation}
Then the derived group of $\Gamma$ is generated by at most $f$ elements 
and thus the metabelian top of $\Gamma$ is polycyclic.
\end{prp}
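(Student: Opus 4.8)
The plan is to combine Theorem~\ref{thm:Groups-with-infinite-cyclic-abelianisation} with Proposition~\ref{prp:Two-generator-Artin-groups-with-odd-label}, exactly as announced in the paragraph preceding the statement.

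\emph{Step 1: produce an epimorphism $\pi \colon \Gamma \epi \Z$.} The assignment $s \mapsto 1$ for every $s \in S$ respects all defining relations: for an even label $2m$ both sides of $(s_i s_j)^m = (s_j s_i)^m$ have exponent sum $2m$, and for an odd label $2m+1$ both sides of $(s_i s_j)^m s_i = s_j (s_i s_j)^m$ have exponent sum $2m+1$. So $\pi$ is well defined and onto. Moreover, comparing exponent sums in an odd relation forces its two generators to become equal in $\Gamma_{\ab}$; since $(S, E_{\odd})$ is connected, $\Gamma_{\ab}$ is cyclic, generated by the common image of the elements of $S$. As $\pi$ maps that image to a generator of $\Z$, it has infinite order, hence $\Gamma_{\ab} \cong \Z$ and, a surjective endomorphism of $\Z$ being injective, $\ker \pi = \Gamma'$.

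\emph{Step 2: apply the second basic result.} Invoke Theorem~\ref{thm:Groups-with-infinite-cyclic-abelianisation} with $\Gamma$, the finite generating set $S$, the epimorphism $\pi$, and the edge set $E_0$. Hypothesis (a) holds by construction of $\pi$, and (b) holds because $E_0$ is a spanning tree of $\Delta$, so the graph $(S, E_0)$ is connected. The theorem gives that $\Gamma' = \ker \pi$ is generated by $\bigcup_{e \in E_0} N_e$, where for an edge $e = \{s, s'\}$ one puts $G_e = \langle s, s' \rangle \leq \Gamma$ and $N_e = \ker \pi \cap G_e$.

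\emph{Step 3: identify each $N_e$ and count.} For $e = \{s, s'\} \in E_0 \subseteq E_{\odd}$, Remark~\ref{remark:Subgroups-of-Artin-groups} (Paris's theorem) identifies $G_e$, via the canonical map, with the two-generator Artin group on $\{s, s'\}$ carrying the single odd label $\lambda(e) = 2m_e + 1$; as recalled before Proposition~\ref{prp:Two-generator-Artin-groups-with-odd-label}, this group has infinite cyclic abelianisation. The restriction $\pi|_{G_e}$ sends $s$ to $1$, hence is onto $\Z$ and factors through $(G_e)_{\ab} \cong \Z$ by an isomorphism, so $N_e = \ker(\pi|_{G_e}) = (G_e)'$. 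By Proposition~\ref{prp:Two-generator-Artin-groups-with-odd-label} this derived group is free of rank $2m_e = \lambda(e) - 1$, in particular generated by $\lambda(e) - 1$ elements. Therefore $\Gamma'$ is generated by $\sum_{e \in E_0} (\lambda(e) - 1) = f$ elements. Finally $G' = \Gamma'/\Gamma''$ is a quotient of the finitely generated group $\Gamma'$, hence finitely generated abelian, while $G/G' \cong \Gamma_{\ab} \cong \Z$ is finitely generated abelian; thus the metabelian top $G = \Gamma/\Gamma''$ is polycyclic.

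The only genuinely delicate point, and the one I would treat most carefully, is the identification $N_e = (G_e)'$: it relies on recognising $G_e$ as the two-generator Artin group through Remark~\ref{remark:Subgroups-of-Artin-groups} and on the fact that its abelianisation is infinite cyclic and is realised by $\pi|_{G_e}$. Everything else is bookkeeping.
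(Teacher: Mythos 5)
Your proof is correct and follows essentially the same route as the paper: abelianisation is infinite cyclic so $\ker\pi=\Gamma'$, Theorem~\ref{thm:Groups-with-infinite-cyclic-abelianisation} reduces everything to the edge subgroups $N_e$, and Proposition~\ref{prp:Two-generator-Artin-groups-with-odd-label} bounds each $N_e$ by $\lambda(e)-1$ generators. The only cosmetic difference is that the paper first passes to the Artin group $\Gamma_0$ of the spanning tree and then views $\Gamma$ as a quotient, whereas you work directly inside $\Gamma$ via Paris's injectivity theorem (Remark~\ref{remark:Subgroups-of-Artin-groups}); both are fine, and in fact one only needs that $G_e$ is a \emph{quotient} of the two-generator Artin group, so even the appeal to injectivity could be dropped.
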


\begin{proof}
The defining relations of $\Gamma$ imply
that the function $s \mapsto 1$ extends to an epimorphism
$\pi \colon \Gamma \epi \Z$.
Moreover,
the assumption that the subgraph $(S, E_{\odd})$ is connected implies
that the abelianisation of $\Gamma$ is infinite cyclic;
it follows
that the kernel $N$ of $\pi$ coincides with the derived group of $\Gamma$.

Consider now the Artin system $(\Gamma_0, S)$ 
defined by the subgraph $(S, E_0)$ 
and the restriction $\lambda_0$ of $\lambda$ to $E_0$.
This subgraph is, by hypothesis, a spanning tree of $(S, E)$.
Fix an edge $e = \{s, s'\} \in E_{\odd}$ 
and let $\Gamma_e$ be the subgroup generated 
by the end points of the edge $e$.
Proposition \ref{prp:Two-generator-Artin-groups-with-odd-label}
then tells one 
that the kernel $N_e$ of the restriction of $\pi$ to $\Gamma_e$ 
is generated by $\lambda(e) - 1$ elements.  
Theorem \ref{thm:Groups-with-infinite-cyclic-abelianisation},
on the other hand, 
shows that $N = \ker \pi$ is generated by the union of subgroups $N_e$.
It follows, first of all, 
that $N$ is generated by at most $f = \sum\nolimits_{e \in E_0} (\lambda(e) -1)$ elements and, then,
that the metabelian top of $\Gamma_0$ is polycyclic.

Now to the Artin system $(\Gamma, (S, E, \lambda))$. 
The group $\Gamma$ has a presentation with the same generating set $S$ 
as has $\Gamma_0$
and a set of relations which includes those of $\Gamma_0$.
So $\Gamma$ is a quotient group of $\Gamma_0$
and the claim of the proposition follows.
\end{proof}

\begin{remarks}
\label{remarks:Artin-system}
1) Let $(\Gamma, \Delta(S, E, \lambda))$ be an Artin system
and let $\Delta_{\odd}$ be the subgraph of $\Delta(S, E, \lambda)$ 
with vertex set $S$ and with $E_{\odd}$ as set of edges. 
The graph $\Delta_{\odd}$ need not be connected;
its connected components correspond bijectively to a basis of the abelianisation
$\Gamma_{\ab}$ of $\Gamma$.
The Artin systems discussed in the previous proposition 
are thus precisely the \emph{Artin systems with abelianisations of rank 1}.

2) If one strengthens the hypotheses of Proposition 
\ref{prp:Artin-systems-with-infinite-cyclic-abelianisation}
suitably, the derived group of $\Gamma$ will be perfect; 
see Theorem \ref{thm:Artin-systems-with-perfect-derived-groups}.
\end{remarks}
In the proof of Proposition
\ref{prp:Artin-systems-with-infinite-cyclic-abelianisation} 
the assumption 
that the subgraph $(S, E_{\odd})$ be connected 
is used in two places:
it implies 
that the kernel of $\pi$ coincides with the derived group of 
$\Gamma$
and it guarantees 
that the kernels of the epimorphisms $\pi_e \colon G_e \epi \Z$  are finitely generated.
If the label of an edge is even and greater than 2, 
the latter property does not hold, 
as a look at Proposition
\ref{prp:Two-generator-Artin-groups-with-even-label}
will disclose, 
but is true if the label is 2 
since $G_e$ is then free abelian of rank 2.
The given proof of Proposition 
\ref{prp:Artin-systems-with-infinite-cyclic-abelianisation} 
allows one therefore to infer the following variant of the previous proposition:
\begin{prp}
\label{prp:Artin-systems-with-infinite-cyclic-quotient}
Let $(\Gamma, \Delta(S, E, \lambda))$ be an Artin system
and let $(S, E_{(2, \odd})$ be the subgraph of 
$(\Gamma, \Delta(S, E, \lambda))$ with vertex set $S$ 
and set of edges all those edges with label either 2 or an odd number greater than 2.
Assume this subgraph is connected;
choose a spanning tree $E_0$ inside  it
and set
\begin{equation}
\label{eq:Bound-for-number-of-generators}
f = \sum\nolimits_{e \in E_0} \left( \lambda(e) -1\right).
\end{equation}
Let $\pi \colon \Gamma \epi \Z$ denote the epimorphism
which sends each generator $s \in S$ to $1 \in \Z$.
Then the kernel of $\pi$ is generated by at most $f$ elements.
\end{prp}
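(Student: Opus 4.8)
The plan is to run the proof of Proposition~\ref{prp:Artin-systems-with-infinite-cyclic-abelianisation} almost verbatim, the one adjustment being that the step there identifying $\ker\pi$ with the derived group of $\Gamma$ must be dropped: connectedness of the subgraph spanned by the edges with label $2$ or with odd label does \emph{not} force $\Gamma_{\ab}$ to be infinite cyclic, since a label-$2$ edge imposes no relation on the abelianisation. This is precisely why the proposition only asserts a bound on $\ker\pi$ and not polycyclicity of the metabelian top, and it means everything below is phrased directly in terms of the kernel $N=\ker\pi$.

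First I would reduce to the case in which $E$ is the tree $E_0$. Let $(\Gamma_0, S)$ be the Artin system attached to the spanning tree $(S, E_0)$ with the restriction of $\lambda$. Since $\Gamma$ is presented by the generating set $S$ and a set of relations containing those of $\Gamma_0$, there is a canonical epimorphism $\varphi\colon \Gamma_0 \epi \Gamma$ fixing $S$, and the two epimorphisms $\pi_0$ and $\pi$ onto $\Z$ that send every generator to $1$ satisfy $\pi\circ\varphi = \pi_0$. Hence $\varphi^{-1}(\ker\pi) = \ker\pi_0$, and since $\varphi$ is surjective, $\varphi(\ker\pi_0) = \ker\pi$; so a generating set of $\ker\pi_0$ of size $\le f$ maps onto one of $\ker\pi$ of the same size. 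It therefore suffices to prove the bound for $\Gamma_0$, i.e. one may assume $E = E_0$.

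Next, for an edge $e = \{s,s'\}\in E_0$ let $\Gamma_e$ be the subgroup of $\Gamma$ generated by $s$ and $s'$. By injectivity of parabolic subgroups of Artin groups (Remark~\ref{remark:Subgroups-of-Artin-groups}, \cite[Theorem 3.1]{Par97b}) this $\Gamma_e$ is the two-generator Artin group with the single label $\lambda(e)$, and $N_e := \ker\pi\cap\Gamma_e$ is the kernel of the epimorphism $\Gamma_e\epi\Z$ sending both generators to $1$. If $\lambda(e) = 2$, then $\Gamma_e$ is free abelian of rank $2$ and $N_e$ is infinite cyclic, hence generated by $\lambda(e)-1 = 1$ element. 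If $\lambda(e)$ is odd, then $\Gamma_e$ has infinite cyclic abelianisation, so $N_e$ coincides with the derived group of $\Gamma_e$, which by Proposition~\ref{prp:Two-generator-Artin-groups-with-odd-label} is free of rank $\lambda(e)-1$. In both cases $N_e$ is generated by $\lambda(e)-1$ elements.

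Finally I would apply Theorem~\ref{thm:Groups-with-infinite-cyclic-abelianisation} to the connected graph $\Delta_0 = (S, E_0)$ (a spanning tree has vertex set $S$ and is connected) and to $\pi$, which sends every generator to $1\in\Z$: this gives that $\bigcup_{e\in E_0} N_e$ generates $\ker\pi$. Combined with the preceding paragraph, $\ker\pi$ is generated by at most $\sum_{e\in E_0}(\lambda(e)-1) = f$ elements, which is the assertion. I expect no genuine obstacle: the statement is designed to drop out of Theorem~\ref{thm:Groups-with-infinite-cyclic-abelianisation} together with the two-generator analysis, and the only mildly non-routine point is noticing that a label-$2$ edge contributes exactly $\lambda(e)-1 = 1$ generator, so that the uniform formula $f = \sum_{e\in E_0}(\lambda(e)-1)$ continues to govern the count.
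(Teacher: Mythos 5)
Your proposal is correct and follows essentially the same route as the paper, which itself derives this proposition by rerunning the proof of Proposition \ref{prp:Artin-systems-with-infinite-cyclic-abelianisation}: reduce to the spanning tree, note that a label-$2$ edge gives $\Gamma_e \cong \Z^2$ so $N_e$ is infinite cyclic (hence $\lambda(e)-1=1$ generators) while odd-label edges are handled by Proposition \ref{prp:Two-generator-Artin-groups-with-odd-label}, and then glue via Theorem \ref{thm:Groups-with-infinite-cyclic-abelianisation}. Your explicit observation that the identification of $\ker\pi$ with $\Gamma'$ must be dropped is exactly the point the paper makes in the paragraph preceding the statement.
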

\begin{remark}
\label{remark:Artin-systems-with-infinite-cyclic-quotient}
Suppose $(\Gamma, \Delta(S, E, \lambda))$ 
is an Artin system all whose edges have label 2.
\footnote{Recall that $E$ is not required to be the set of all edges of the complete graph on the set $S$}
Then this system is, by definition, a \emph{right angled Artin system}.
Proposition \ref{prp:Artin-systems-with-infinite-cyclic-quotient}
holds in this case, too, 
but a far more general result, due to J. Meier and L. vanWyck, is then available;
see \cite[Thm.\;6.1]{MeVa95}.
\end{remark}

%
\subsection{Generalized Artin systems}
\label{ssec:Generalised-Artin-systems}
Proposition 
\ref{prp:Artin-systems-with-infinite-cyclic-abelianisation}
shows 
that the derived groups of many Artin systems discussed in the literature
are finitely generated.
Its proof is based on two results,
the very general Theorem \ref{thm:Groups-with-infinite-cyclic-abelianisation}
and Proposition \ref{prp:Two-generator-Artin-groups-with-odd-label}
which guarantees that the derived groups of the two generator subgroups 
$\Gamma_e$ are finitely generated.
Now there are other two generator groups that enjoy this property 
and which can be used as building blocks for generalized Artin systems,
in particular the groups defined and discussed next.

\begin{definition}
\label{definition:Generalised-two-generator-Artin-systems}
Let $v_1$ and $v_2$ be \emph{positive} words in the generators $s$ and $s'$ 
that have the same odd length $\ell = 2m + 1$.
Assume that the initial letters of $v_1$, $v_2$ are distinct
and that their terminal letters are likewise distinct.
Suppose, in addition,
that $s$ occurs $m+1$ times in $v_1$ 
and that $s'$ occurs $m+1$ times in $v_2$.
Define $\Gamma(s, s', v_1, v_2)$  to be the group with generators $s$, $s'$
and defining relation $v_1 = v_2$.
\end{definition}
\begin{lem}
\label{lem:Generalized-Artin-groups}
Let $\Gamma = \Gamma(s, s', v_1, v_2)$ be the group defined before. 
The assignments $s \mapsto 1$ and $s' \mapsto 1$ extend then to an epimorphism  $\pi \colon \Gamma \epi \Z$.
Its kernel is a finitely generated free group 
which coincides with the derived group of $\Gamma$.
\end{lem}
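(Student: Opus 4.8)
The plan is to imitate the proof of Proposition \ref{prp:Two-generator-Artin-groups-with-odd-label}. Since $v_1$ and $v_2$ are positive words of the common length $\ell=2m+1$, the assignment $s,s'\mapsto 1$ sends the relator $r=v_1v_2^{-1}$ to $\ell-\ell=0$ and so extends to a surjection $\pi\colon\Gamma\epi\Z$. Abelianising, $r$ maps to $(1,-1)\in\Z^{2}$: indeed $v_1$ contains the letter $s$ exactly $m+1$ times and hence $s'$ exactly $m$ times, whereas $v_2$ contains $s'$ exactly $m+1$ times and $s$ exactly $m$ times. Thus $\Gamma_{\ab}\cong\Z^{2}/\Z(1,-1)$ is infinite cyclic, generated by the common image of $s$ and $s'$; hence $\pi$ factors through an isomorphism $\Gamma_{\ab}\iso\Z$, and therefore $\ker\pi=\Gamma'$.

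Next I pass to the free basis $\{t,a\}$ of the free group $F$ on $\{s,s'\}$ given by $t=s$, $a=s's^{-1}$, so that $s=t$, $s'=at$, and $\pi(t)=1$, $\pi(a)=0$. Applying the Reidemeister--Schreier procedure to $\ker\pi=\Gamma'$ with Schreier transversal $\{t^{n}\mid n\in\Z\}$ presents $\Gamma'$ on the generators $a_{n}:=t^{n}at^{-n}$ $(n\in\Z)$ with the conjugates $t^{k}r_{0}t^{-k}$ $(k\in\Z)$ as relators, where $r_{0}$ is obtained from $r$ by the substitution $s=t$, $s'=at$ and by collecting powers of $t$. Using $ta=a_{1}t$ (more generally $t^{j}a_{i}=a_{i+j}t^{j}$) one checks at once that a positive word $w$ in $s,s'$ of length $L$ becomes, after this substitution, $a_{i_{1}}a_{i_{2}}\cdots a_{i_{p}}\,t^{L}$, where $i_{1}<i_{2}<\cdots<i_{p}$ are precisely the positions (numbered $0,1,\dots,L-1$) of the letters $s'$ in $w$. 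Applying this to $v_1$ and $v_2$ and cancelling the resulting equal powers of $t$ gives
\begin{equation*}
r_{0}=a_{\alpha_{1}}a_{\alpha_{2}}\cdots a_{\alpha_{m}}\;a_{\beta_{m+1}}^{-1}\cdots a_{\beta_{2}}^{-1}a_{\beta_{1}}^{-1},
\end{equation*}
where $\alpha_{1}<\cdots<\alpha_{m}$ are the positions of the $m$ letters $s'$ in $v_1$ and $\beta_{1}<\cdots<\beta_{m+1}$ are the positions of the $m+1$ letters $s'$ in $v_2$, all of them lying in $\{0,1,\dots,2m\}$.

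The heart of the argument — and the step I expect to need the most care — is the claim that the smallest index and the largest index occurring in $r_{0}$ each occur in $r_{0}$ exactly once. As the $\alpha_{k}$ are pairwise distinct and so are the $\beta_{k}$, it suffices to show $\alpha_{1}\neq\beta_{1}$ and $\alpha_{m}\neq\beta_{m+1}$, and this is exactly where the two hypotheses on $v_1,v_2$ are used. One has $\alpha_{1}=0$ if and only if $v_1$ begins with $s'$, and $\beta_{1}=0$ if and only if $v_2$ begins with $s'$ (otherwise the position of the first $s'$ is $\ge 1$); since by hypothesis exactly one of $v_1,v_2$ begins with $s'$, the value $0=\min(\alpha_{1},\beta_{1})$ is attained by exactly one of the two sequences, so the smallest index $0$ occurs once in $r_{0}$. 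Likewise $\alpha_{m}=2m$ if and only if $v_1$ ends with $s'$ and $\beta_{m+1}=2m$ if and only if $v_2$ ends with $s'$ (otherwise the position of the last $s'$ is $\le 2m-1$); since by hypothesis exactly one of $v_1,v_2$ ends with $s'$, the largest index $2m=\max(\alpha_{m},\beta_{m+1})$ occurs once in $r_{0}$. (Incidentally $\alpha_{m}\neq\beta_{m+1}$ also shows that the two halves of $r_{0}$ do not cancel at their junction, so $r_{0}$ is reduced, with smallest index $0$ and largest index $2m$.)

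It remains only to conclude as in the proof of Proposition \ref{prp:Two-generator-Artin-groups-with-odd-label}. Since $a_{2m}$ occurs exactly once in $r_{0}$, the relator $r_{0}$ allows one to eliminate $a_{2m}$ as a word in $a_{0},\dots,a_{2m-1}$; the relator $t^{k}r_{0}t^{-k}$ then eliminates $a_{2m+k}$ in terms of strictly smaller-indexed generators, disposing successively of all $a_{n}$ with $n\ge 2m$. Symmetrically, $a_{0}$ occurs once in $r_{0}$, so the relators $t^{-k}r_{0}t^{k}$ $(k\ge1)$ eliminate $a_{-1},a_{-2},\dots$ one after another. Each relator is used exactly once in this process, so after these Tietze transformations one is left with the free presentation $\langle a_{0},\dots,a_{2m-1}\mid\varnothing\rangle$; hence $\ker\pi=\Gamma'$ is a free group of rank $2m$, in particular finitely generated. (For $m=0$ the same computation gives $r_{0}=a_{0}^{-1}$ and $\Gamma'$ free of rank $0$.)
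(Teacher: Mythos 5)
Your proof is correct and follows essentially the same route as the paper: you pass to the basis $\{t=s,\ a=s's^{-1}\}$ (the paper's $u=s's^{-1}$, $u_n=s^nus^{-n}$), rewrite the relator in the shifted generators $a_n$, observe that the hypotheses on distinct initial and terminal letters force the extremal-index generators to occur exactly once, and eliminate to obtain a free group of rank $2m=\ell-1$. Your uniform description via the positions of the letters $s'$ merely avoids the paper's normalisation (first letter of $v_1$ equal to $s$) and its case distinction on the terminal letter, and you additionally make explicit the computation $\Gamma_{\ab}\cong\Z$ and the Tietze eliminations, which the paper leaves implicit.
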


\begin{proof}
Since the first letters of $v_1$ and $v_2$ are distinct, 
one may assume 
that the first letter of $v_1$ is $s$.
The words $v_1$ and $v_2$ have then the form

\[
v_1 = s^{a_1} (s')^{b_1} s^{a_2} \cdots 
\quad \text{ and } \quad
v_2 = (s')^{c_1} s^{d_1} (s')^{c_2} \cdots. 
\]
In the sequel, 
I shall view $\{s, s'\}$ as the generating set of a free group $F$ of rank 2
and $\pi$ as the epimorphism of $F$ onto $\Z$
which sends both $s$ and $s'$ to $1 \in \Z$.

The kernel of $\pi$ contains the element $u = s' \cdot s^{-1}$
and it is freely generated by the conjugates  $u_n = s^n u s^{-n}$
of  $u$ by the powers of $s$.
Since $s' = u \cdot s$ the words $v_1$ and $v_2$, 
when rewritten as words in $s$ and $u$,
have the form
\[
w_1 = s^{a_1} (u s)^{b_1} s^{a_2} \cdots 
\quad \text{ and } \quad
w_2 = (us)^{c_1} s^{d_1} (us)^{c_2} \cdots. 
\]
Two cases now arise, depending on 
whether the word $v_1$ ends in $s$ or in $s'$.
Assume first that the last letter of $v_1$ is $s$. 
Then
\begin{align*}
w_1 &= u_{a_1} u_{a_1 + 1} \cdots u_{a_1 + b_1 - 1} 
\cdot u_{a_1 + b_1 + a_2}\cdots u_{\ell -a_{h+1}-1} \cdot s^\ell, \\
w_2 &= u_{0} u_{1} \cdots u_{c_1-1} \cdot u_{c_1 + d_1}\cdots u_{\ell-1} \cdot s^{\ell}
\end{align*}
The crucial observations to be made at this point are these:
the indices of the generators $u_k$ occurring in the words $w_1$ or $w_2$
are strictly increasing;
moreover, the letter $u_0$ occurs in $w_2$, but not in $w_1$;
similarly, the letter $u_{\ell-1}$ occurs in $w_2$,
but no letter with a higher index does, 
while the highest index of a letter $u_k$ present in $w_1$ is
$\ell - a_{h+1}-1 < \ell-1$.
The kernel of $\pi$ is therefore a free group of rank $\ell -1$.

The analysis of the case 
where the word $v_1$ ends in $s'$ 
is quite similar.
\end{proof}

Upon combining the preceding lemma with
Theorem \ref{thm:Groups-with-infinite-cyclic-abelianisation} one arrives at
\begin{prp}
\label{prp:Generalized-Artin-groups-with-polycyclic-metabelian-tops}
Let $\Delta$ be a finite, connected, combinatorial graph, 
with non-empty set of vertices $S$
and set of edges  
$E \subset \{\{s,s'\} \mid s \in S, s' \in S \text{ and } s \neq s'  \}$.
Choose a total order  $<$ on $S$
and select, 
for every ordered pair $e = (s,s')$ with elements in $S$ and $s < s'$,
a group $\Gamma(e, v_1(e), v_2(e))$, 
as described in Definition 
\ref{definition:Generalised-two-generator-Artin-systems}. 
Finally, 
define
\[
\Gamma = \Gamma(\Delta, E, \{\Gamma(e,v_1(e), v_2(e)) \mid e \in E \}
\]
to be the group of the graph of groups 
with vertex groups the infinite cyclic groups $G_s$
generated by the elements $s \in S$
and edge groups the groups $\Gamma(e, v_1(e), v_2(e))$.
Then the derived group of $\Gamma$ is finitely generated
and hence the metabelian top of $\Gamma$ is polycyclic.
\end{prp}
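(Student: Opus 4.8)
The plan is to imitate the proof of Proposition~\ref{prp:Artin-systems-with-infinite-cyclic-abelianisation}, using the two-generator groups of Definition~\ref{definition:Generalised-two-generator-Artin-systems} and Lemma~\ref{lem:Generalized-Artin-groups} in the roles previously played by the two-generator Artin groups and Proposition~\ref{prp:Two-generator-Artin-groups-with-odd-label}. Unwinding the definition, $\Gamma$ has a presentation with generating set $S$ and, for each edge $e=(s,s')\in E$, the single defining relation $v_1(e)=v_2(e)$, both sides of which are positive words of the same odd length $\ell(e)=2m(e)+1$. Since any two positive words of equal length have the same image under the map $s\mapsto 1$, this assignment respects every defining relation and extends to an epimorphism $\pi\colon\Gamma\epi\Z$.

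Next I would determine $\Gamma_{\ab}$. Abelianising the relation $v_1(e)=v_2(e)$ attached to $e=(s,s')$ and using that $s$ occurs $m(e)+1$ times (hence $s'$ occurs $m(e)$ times) in $v_1(e)$, while $s'$ occurs $m(e)+1$ times (hence $s$ occurs $m(e)$ times) in $v_2(e)$, one finds that $s$ and $s'$ represent the same element of $\Gamma_{\ab}$. As $\Delta$ is connected, all of $S$ collapses to a single generator in $\Gamma_{\ab}$; since $\Gamma_{\ab}$ maps onto $\Z$ it is infinite, hence infinite cyclic, and consequently the kernel $N$ of $\pi$ coincides with the derived group $\Gamma'$ of $\Gamma$.

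The heart of the argument is then Theorem~\ref{thm:Groups-with-infinite-cyclic-abelianisation}, applied with $S$ as generating set and the full edge set $E_0=E$ of the connected graph $\Delta$ (property (a) of that theorem holds by construction of $\pi$). For $e=(s,s')\in E$ let $G_e\leq\Gamma$ be the subgroup generated by $\{s,s'\}$ and $N_e=\ker\pi\cap G_e$. Because the relation $v_1(e)=v_2(e)$ holds in $\Gamma$, there is a canonical epimorphism $\phi_e\colon\Gamma(e,v_1(e),v_2(e))\epi G_e$, and the composite $\pi\circ\phi_e$ is precisely the epimorphism onto $\Z$ considered in Lemma~\ref{lem:Generalized-Artin-groups}; hence $N_e=\phi_e(\ker(\pi\circ\phi_e))$ is a homomorphic image of the kernel computed there. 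By Lemma~\ref{lem:Generalized-Artin-groups} that kernel is a finitely generated (free) group of rank $\ell(e)-1$, so $N_e$ is generated by at most $\ell(e)-1$ elements. Theorem~\ref{thm:Groups-with-infinite-cyclic-abelianisation} now yields that $\bigcup_{e\in E}N_e$ generates $N=\Gamma'$; since $E$ is finite, $\Gamma'$ is finitely generated, by at most $\sum_{e\in E}(\ell(e)-1)$ elements (a bound one can sharpen by shrinking $E$ to a spanning tree of $\Delta$).

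Finally I would pass to the metabelian top $G=\Gamma/\Gamma''$: its derived group $G'=\Gamma'/\Gamma''$ is a quotient of the finitely generated group $\Gamma'$, hence finitely generated, and it is abelian, so polycyclic; and $G/G'\cong\Gamma_{\ab}\cong\Z$ is polycyclic. An extension of a polycyclic group by a polycyclic group being polycyclic, $G$ is polycyclic, which is the assertion. The one point requiring care is the middle step: one must resist invoking injectivity of $\Gamma(e,v_1(e),v_2(e))\to\Gamma$ — the analogue of the theorem of Paris cited in Remark~\ref{remark:Subgroups-of-Artin-groups}, which is not available for these generalised systems. It is enough that $N_e$ is a \emph{quotient} of the finitely generated group $\ker(\pi\circ\phi_e)$, and quotients of finitely generated groups are finitely generated.
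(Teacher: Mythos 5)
Your proof is correct and takes essentially the approach the paper intends: the proposition is stated there as the direct combination of Lemma~\ref{lem:Generalized-Artin-groups} with Theorem~\ref{thm:Groups-with-infinite-cyclic-abelianisation}, argued just as in Proposition~\ref{prp:Artin-systems-with-infinite-cyclic-abelianisation}, which is exactly what you carry out. Your observation that each $N_e$ need only be a \emph{quotient} of the edge-group kernel, so that no analogue of Paris's embedding theorem is needed, is a welcome precision that the paper itself only hints at in Remark~\ref{remark:Subgroups-of-Artin-groups}.
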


\begin{question}
\label{question:Interest-of-generalisation}
Have the generalized Artin groups described in
Proposition \ref{prp:Generalized-Artin-groups-with-polycyclic-metabelian-tops} ever been considered in the literature? 
Do they have interesting applications or special properties?
\end{question}
\section{Artin systems of finite type}
\label{sec:Artin-systems-of-finite-type}
In this section,
I study the metabelian tops of a particular class of Artin systems,
the class of \emph{Artin systems of finite type}. 
My results are not new,
they reproduce part of an earlier investigation, 
undertaken by Jamie Mulholland and Dale Rolfsen in \cite{MuRo06}.
The proofs employed in the two studies differ, however,
as I use \emph{Homology Theory of Groups}, 
and not the method of Reidemeister-Schreier, 
to determine the abelian group $\Gamma'_{\ab}$ 
of an irreducible Artin group of finite type.
The chosen approach allows one, in addition,
to investigate the metabelian tops of some Artin systems 
whose associated Coxeter groups are infinite.
%
\subsection{Artin systems of finite type -- basics}
\label{ssec:Artin-groups-of-finite-type-basics}
%
I begin by recalling the definition of a 
\emph{Coxeter system associated to an Artin system}.
Let $(\Gamma, \Delta(S, E, \lambda))$ be an Artin system
and let $R$ be its standard set of defining relations $ss' \cdots = s's \cdots$.

The Coxeter system $(W(\Gamma), S)$ 
associated to $(\Gamma, \Delta(S, E, \lambda))$ consists of the Coxeter group 
with presentation  
\begin{equation}
\label{eq:Associated-Coxeter-group}
W(\Gamma) = \langle S \mid R \cup \{s^2 = 1 \mid s \in S \}\; \rangle,
\end{equation}
and of the set of generators $S$.

Artin systems of finite type are now defined as follows:
\begin{definition}
\label{def:Artin-group-of-finite-type}
An Artin system $(\Gamma, \Delta(S, E, \lambda))$ is said to of \emph{finite} 
(or \emph{spherical}) type
if its associated Coxeter group is finite.
\end{definition}

Every finite Coxeter system $(W, S)$ is the direct product 
$(W_1, S_1) \times \cdots \times (W_r, S_r)$ of Coxeter systems
which are \emph{irreducible}
in the sense 
that their Coxeter graphs are the connected components 
of the Coxeter graph of $(W, S)$.
The finite, irreducible Coxeter systems have been classified 
by H. S. M. Coxeter in \cite{Cox34};
their Coxeter graphs are displayed in Figure 
\ref{fig:Coxeter-graphs-of-the-irreducible-Artin-groups-of-finite-type}; 
\cf \cite[Theorem 2.7]{Hum90}.
\footnote{In this list, the Coxeter system of type $G_2$ is omitted,
but the isomorphic system of type $I_2(6)$ is included.}
\begin{figure}[ht!]
{\small
\vspace*{0.3cm}\hspace*{1.2cm} $A_\ell$
\hspace*{8.0cm} ($\ell \geq 1$ vertices) \par
\vspace*{-0.55cm} 
\begin{center}
\includegraphics[width=7cm]{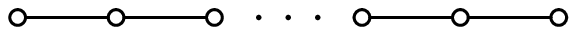}\par
\end{center}
\vspace*{0.3cm}\hspace*{1.2cm} $B_\ell$ 
\hspace*{8.0cm} ($\ell \geq 2$ vertices) \par
\vspace*{-0.5cm} 
\begin{center}
\includegraphics[width=7cm]{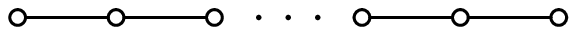}\par
\end{center}
\vspace*{-0.9cm}\hspace*{4.35cm} $4$ \par
\vspace*{0.95cm}\hspace*{1.3cm} $D_\ell$  
\hspace*{8.1cm} ($\ell \geq 4$ vertices) \par
\vspace*{-1.05cm}
\begin{center}
\includegraphics[width=6.6cm]{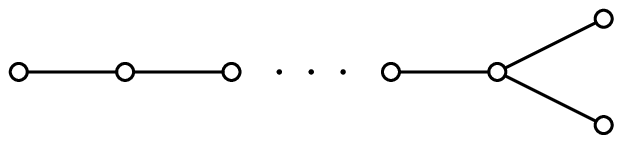}\par
\end{center}
\vspace*{0.2cm}\hspace*{1.2cm} $E_6$ \hspace*{10.2cm} 
\vspace*{-0.85cm}\par
\begin{center}
\includegraphics[width=5.2cm]{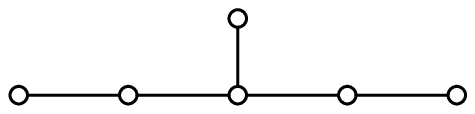}\hspace*{1.4cm}\par
\end{center}
\vspace*{0.4cm}\hspace*{1.2cm} $E_7$ \hspace*{10.2cm}\par
\vspace*{-0.85cm} 
\begin{center}
\includegraphics[width=6.4cm]{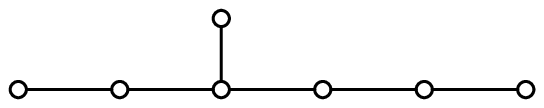}\hspace*{0.25cm}\par
\end{center}
\vspace*{0.4cm}\hspace*{1.2cm} $E_8$  \hspace*{10.2cm} \par
\vspace*{-0.85cm}
\begin{center}
\hspace*{0.9cm}\includegraphics[width=7.5cm]{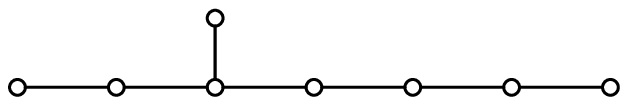} \hspace*{0.0cm}\par
\end{center}
\vspace*{0.65cm}\hspace*{1.2cm} $F_4$ \hspace*{10.2cm} \par
\vspace*{-0.55cm} 
\begin{center}
\includegraphics[width=4.2cm]{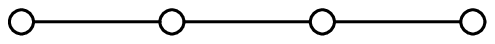}\hspace*{1mm}\par
\end{center}
\vspace*{-1cm}\hspace*{-0.1cm} $4$ \par
\vspace*{0.9cm}\hspace*{1.2cm} $H_3$  \hspace*{10.2cm}\par
\vspace*{-0.5cm} 
\begin{center}
\includegraphics[width=3.0cm]{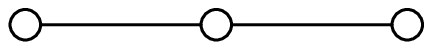}\par
\end{center}
\vspace*{-1cm}\hspace*{0.9cm} $5$ \par
\vspace*{0.9cm}\hspace*{1.2cm}  $H_4$ \hspace*{10.2cm}\par
\vspace*{-0.5cm} 
\begin{center}
\includegraphics[width=4.2cm]{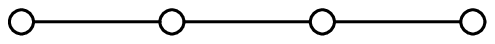}\par
\end{center}
\vspace*{-1cm}\hspace*{1.9cm} $5$ \par
\vspace*{0.9cm}\hspace*{1.2cm} $I_2(m)$\hspace*{10.2cm}\par
\vspace*{-0.5cm} 
\begin{center}
\includegraphics[width=2.0cm]{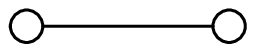}\par
\end{center}
\vspace*{-0.95cm}\hspace*{-0.05cm} $m$ \par
\vspace*{-0.1cm}
\hspace*{9.6cm} ($m \geq 5$) \par
\vspace*{-1.05cm}
}
\vspace*{1cm}
\caption{Coxeter graphs of the irreducible Artin systems of finite type}
\label{fig:Coxeter-graphs-of-the-irreducible-Artin-groups-of-finite-type}
\end{figure}

%
The abelianisation of such a Coxeter system is infinite cyclic,
except for the systems of type $B$, of type $F_4$ 
or of type $I_2(2n)$ with $n \geq1$
where the abelianisation is free abelian of rank 2.

The Coxeter graphs displayed in Figure
\ref{fig:Coxeter-graphs-of-the-irreducible-Artin-groups-of-finite-type}
are to be interpreted as follows.
\footnote{I follow the convention according to which 
commuting relations $s_i s_j = s_j s_i$ between generators in $S$ 
are not represented by edges of the Coxeter graph; 
\cf \cite[Prop.\;6.1]{Hum90}.}
Let $\Delta(S,E,\lambda)$ be the Coxeter graph of an irreducible Coxeter system of finite type,
with generating set $S$, edge set $E$ and labelling function 
$\lambda \colon E \to \N \smallsetminus \{0,1,2\}$.
Enumerate the elements of $S$, say $s_1$, \ldots, $s_\ell$.
The Artin system associated to $\Delta$ 
is the group with generating set $S$,
with set of edges 
$ E = \{ \{s_i, s_j\} \mid 1 \leq  i < j  \leq \ell\}$
and with the following set of 
$\tfrac{1}{2} (\card S - 1) \cdot \card S$ 
defining relations:
for every couple $(i, j)$ with $i < j$ and $\{i,j\} \in E$
there is a relation $u(s_i,s_j) = u(s_j,s_i)$
where $u(s_i,s_j) = s_is_js_i \cdots$ is the positive word of length 
$\lambda(e) \geq 3$ with first letter $s_i$ and in which the letters $s_i$ and $s_j$ alternate,
and for every couple $(i, j)$ with $i < j$ and $\{i,j\} \notin E$
there is the commuting relation $s_is_j= s_js_i$.

Each of the Coxeter graphs listed in Figure
\ref{fig:Coxeter-graphs-of-the-irreducible-Artin-groups-of-finite-type}
has a spanning tree the edges of which have labels greater than 2,
and these labels are all odd, 
except for the Coxeter graphs of type $B_\ell$, of type  $F_4$, 
or of type $I_2(m)$ with $m$ even.
\footnote{By convention,
an edge of the Coxeter graph with label 3 is not decorated by the number 3.}
The spanning tree is a line, except if the system is of type $D$ or of type $E$.
The abelianisations of the irreducible Artin groups of finite type 
are thus infinite cyclic or free abelian of rank 2, 
and the latter case happens precisely 
if the defining graph has an edge  with label 4 
or the the system is of type $I_2(2n)$ with $n \geq 1$.
%
%
\subsection{Summary of results}
\label{ssec:Summary-of-results}
Given an irreducible Artin system $(\Gamma, \Delta(S, E, \lambda))$ of finite type,
we are interested in the answers to the following questions:
\begin{questions}
\label{questions:derived-group}
\begin{enumerate}
\item [\textbf{Q1:}] 
What is the structure of $\Gamma'$?  Is it finitely generated?
\item [\textbf{Q2:}]
What is the structure of $\Gamma'_{\ab} = \Gamma' /\Gamma''$?
Is it finitely generated?
\item [\textbf{Q3:}]
Is $\Gamma'$ perfect?
\end{enumerate}
\end{questions}
The obtained answers are rather complete 
if the abelianisation of $\Gamma$ is infinite cyclic 
or if $\Gamma$ is of type $I_2$,
but less so if $\Gamma$ is of type $B$ or of  type $F_4$.

Here is a brief summary of the results that will be proved in this Section
\ref{sec:Artin-systems-of-finite-type}.
\begin{thm}
\label{thm:Answers-to-questions}
Let $(\Gamma, \Delta(S, E, \lambda))$ be an irreducible Artin system of finite type.
Then the following statements hold:
\begin{enumerate}[(i)]
\item $\Gamma'$ is finitely generated 
whenever $\Gamma_{\ab}$ its infinite cyclic.
\item $\Gamma'_{\ab}$ is free abelian and of finite rank 
unless $\Gamma$ has type $I_2(2n)$ with $n \geq 3$. 
\item  $\Gamma'$ is perfect unless $(\Gamma, \Delta(S, E, \lambda))$ 
has one of the types
\[
A_2, \quad A_3, \quad B_2, \quad B_3, \quad  B_4, \quad D_4, \quad F_4
\quad\text{or}\quad I_2(m)  \text{ with } m \geq 5.
\]
\end{enumerate}
\end{thm}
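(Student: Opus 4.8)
The plan is to run through the finite list of irreducible Coxeter graphs of finite type displayed in Figure~\ref{fig:Coxeter-graphs-of-the-irreducible-Artin-groups-of-finite-type}, handling the families $A_\ell$, $B_\ell$, $D_\ell$, $E_6$, $E_7$, $E_8$, $F_4$, $H_3$, $H_4$ and $I_2(m)$ one at a time. Write $Q=\Gamma_{\ab}$ throughout, and recall from the discussion preceding Questions~\ref{questions:derived-group} that $Q$ is infinite cyclic except when $\Gamma$ is of type $B_\ell$, $F_4$ or $I_2(2n)$, where $Q$ is free abelian of rank~$2$, and that each of these connected Coxeter graphs has a spanning tree all of whose labels are $\geq 3$, the labels being choosable odd precisely in the cases where $Q$ is infinite cyclic. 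Statement~(i) then follows immediately: when $Q$ is infinite cyclic such a spanning tree lies inside the subgraph $(S,E_{\odd})$, so $(S,E_{\odd})$ is connected and Proposition~\ref{prp:Artin-systems-with-infinite-cyclic-abelianisation} yields that $\Gamma'$ is generated by at most $\sum_{e\in E_0}(\lambda(e)-1)$ elements.

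For (ii) and (iii) the real task is, for each type, to determine the abelian group underlying the $\Z Q$-module $\Gamma'_{\ab}=\Gamma'/\Gamma''$, which by Shapiro's lemma is $H_1(\Gamma,\Z Q)$. I would compute this homologically, as announced at the start of the section. The standard presentation $\langle S\mid R\rangle$ gives the initial segment $\Z\Gamma^{|R|}\xrightarrow{J}\Z\Gamma^{|S|}\xrightarrow{\partial}\Z\Gamma\to\Z\to 0$ of a free resolution of the trivial module --- here $J$ is the Fox Jacobian $(\partial R_e/\partial s)$ and $\partial$ the column vector of the elements $s-1$, and the only input needed is that the images of the relators generate the relation module, so no asphericity is involved. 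Tensoring with $\Z Q\otimes_{\Z\Gamma}-$ exhibits $\Gamma'_{\ab}$ as the homology at the middle term of the length-two complex $\Z Q^{|R|}\xrightarrow{\widehat J}\Z Q^{|S|}\xrightarrow{\widehat\partial}\Z Q$; since $\ker\widehat\partial$ is free of rank $|S|-1$ --- simply the augmentation-zero submodule in the infinite cyclic case --- one may rewrite $\Gamma'_{\ab}$ as the cokernel of a matrix over $\Z Q$, the reduced Jacobian. Then (ii) asks whether this cokernel is free abelian of finite rank, and (iii) whether it vanishes, i.e.\ whether the ideal generated by the maximal minors of the reduced Jacobian is all of $\Z Q$.

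I would then evaluate these cokernels family by family. For the two-generator systems $I_2(m)$ the answer is already in hand: by Proposition~\ref{prp:Two-generator-Artin-groups-with-odd-label} an odd label $m=2k+1$ gives $\Gamma'\cong F_{2k}$, hence $\Gamma'_{\ab}\cong\Z^{m-1}$, free of finite rank and non-zero, whereas by Proposition~\ref{prp:Two-generator-Artin-groups-with-even-label} the even-label two-generator systems have $\Gamma'_{\ab}$ free abelian of countably infinite rank, which is responsible for the exception in (ii) and, being non-zero, also for the appearance of these systems in (iii). For the path-shaped families $A_\ell$, $B_\ell$ and for $F_4$ the Fox Jacobian comes out banded and all but triangular, since the defining graph is a path and each relation involves only two adjacent generators; the diagonal contributions of edges with odd label $\geq 3$ are shifted polynomials of the form $1+(st)+\cdots+(st)^{k-1}$, with unit leading and trailing coefficient, and the off-diagonal entries can be cleared by elementary row and column operations over $\Z Q$. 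The graphs underlying $D_\ell$, $E_6$, $E_7$, $E_8$, $H_3$ and $H_4$ are a path with one short branch, and essentially the same reduction applies. Reading off the reduced matrix should show in every case that $\Gamma'_{\ab}$ is free abelian of finite rank, proving (ii) off the even-$I_2$ exceptions, and that it collapses to the zero group precisely for the types \emph{not} named in (iii) --- equivalently that the maximal-minor ideal is the unit ideal exactly for $A_\ell$ with $\ell\geq 4$, $B_\ell$ with $\ell\geq 5$, $D_\ell$ with $\ell\geq 5$, $E_6$, $E_7$, $E_8$, $H_3$ and $H_4$.

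The hard part will be this last reduction, and in particular the perfectness assertions in (iii). Because $\Z Q$ is not a principal ideal domain the reduced Jacobian cannot simply be diagonalised; one must keep track of the explicit polynomial entries and use that a spanning tree of the Coxeter graph has all labels odd and $\geq 3$ --- so that the associated diagonal entries are units up to a cyclotomic factor that the remaining relations annihilate --- in order to force the cokernel to vanish. This is most delicate for the rank-two systems $B_\ell$ with $\ell\geq 3$ and $F_4$, where $\Z Q$ is a two-variable Laurent ring and the syzygies are genuinely two-dimensional; correspondingly the conclusions there are weaker --- finite rank in (ii), but perfectness only from $B_5$ onwards --- which is exactly why $B_2$, $B_3$, $B_4$ and $F_4$ survive in the exceptional list of (iii). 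The whole computation recovers, by a homological route, the results obtained by Mulholland and Rolfsen in \cite{MuRo06}.
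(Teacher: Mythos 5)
Your part (i) and your handling of the two-generator systems coincide with the paper: (i) is indeed an immediate consequence of Proposition \ref{prp:Artin-systems-with-infinite-cyclic-abelianisation}, and the $I_2(m)$ cases follow from Propositions \ref{prp:Two-generator-Artin-groups-with-odd-label} and \ref{prp:Two-generator-Artin-groups-with-even-label}. For (ii) and (iii), however, what you give is a programme rather than a proof, and the gap sits exactly where you write that ``the hard part will be this last reduction''. The content that needs proving is precisely the value of the cokernel in each case: that $\Gamma'/\Gamma''$ vanishes for $A_\ell$ ($\ell\geq4$), $D_\ell$ ($\ell\geq5$), $E_6$, $E_7$, $E_8$, $H_3$, $H_4$, $B_\ell$ ($\ell\geq5$), and that it is free abelian of rank $2$ for $A_3$, $D_4$, $B_4$ and of rank $4$ for $B_3$ and $F_4$; you assert that ``reading off the reduced matrix should show'' this without ever producing a reduction. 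Moreover, the mechanism you invoke --- clearing the off-diagonal entries by elementary row and column operations --- is not available over the Laurent ring $\mathbb{Z}Q$, as you yourself concede a few lines later, and your structural claim that the Jacobian is banded ``since each relation involves only two adjacent generators'' is mistaken: the standard presentation also contains commuting relations between generators that are \emph{not} adjacent in the Coxeter graph, and these rows are exactly what make the computation succeed. In the seed computation for $A_4$ (section \ref{sssec:Artin-system-of-type-A4}) the combinations $L_3-L_2$, $L_1+L_2-L_3$, $L_3-L_1$ of the three commuting-relation rows give $(1-s)u$, $(1-s)v$, $(1-s)w$, which together with the rows $(1-s+s^2)u$, $(1-s+s^2)v$, $(1-s+s^2)w$ put $u$, $v$, $w$ into the image of $d_2$ and hence give perfectness; a path carrying only the braid relations along its edges has $\Gamma'/\Gamma''\cong\bigl(\mathbb{Z}Q/(1-s+s^2)\bigr)^{3}\neq 0$, so bandedness alone proves nothing.

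What is missing, and what the paper supplies, is a seed-plus-propagation argument in place of your hoped-for matrix reduction: explicit low-rank computations ($A_3$, $A_4$, $D_4$, $H_3$, $B_3$, $B_4$, $F_4$, the last three over the rank-two group ring, with Lemma \ref{lem:T-as-bar-S-module} needed to identify the additive group in the $B_3$ case), and then Theorem \ref{thm:Artin-systems-with-perfect-derived-groups}, which shows that if the spanning tree has odd labels, vertices at distance two commute, and one difference $b_1-b_2$ already lies in the image of $d_2$, then all such differences do; Corollaries \ref{crl:Artin-systems-with-perfect-derived-groups} and \ref{crl:Artin-systems-with-perfect-derived-groups-II} obtain the seed from an $A_4$ or an $H_3$ subsystem, and $B_\ell$ with $\ell\geq 5$ is then settled via its $A_{\ell-1}$ subsystem together with one further commuting relation. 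Until you either carry out your case-by-case reductions in full or establish some equivalent of this induction, parts (ii) and (iii) remain unproven.
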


Claim (i) is a direct consequence of Proposition
\ref{prp:Artin-systems-with-infinite-cyclic-abelianisation}.
The remaining assertions will be established 
by computing the homology groups $H_1(\Gamma', \Z)$ 
for irreducible Artin systems of finite type with few generators
and by using an inductive argument for the remaining Artin systems.
Note that, by Shapiro's Lemma 
\footnote{see, \eg \cite[Ch.\;III, Prop.\;6.2]{Bro94}}, 
the group $H_1(\Gamma', \Z)$ is isomorphic to the homology group
$H_1(\Gamma, \Z{\Gamma_{\ab}})$
and that the latter group can be computed with the beginning of the 
$\Z{\Gamma}$-free resolution of $\Z$ 
associated to the standard presentation of the Artin group $\Gamma$.
%
%
\subsection{Study of the differentials $d_1$ and $d_2$}
\label{ssec:Study-of-differentials}
%
Let $\ell$ be the number of generators of $\Gamma$ 
in its standard presentation.
The beginning of the previously mentioned $\Z{\Gamma}$-free resolution of $\Z$ 
has the form
\begin{equation}
\label{eq:Beginning-of-resolution}
\cdots \longrightarrow 
\bigoplus\nolimits_{(i,j)} \Z{\Gamma} \cdot b_{(i,j)}
 \xrightarrow{\delta_2}
\bigoplus_{1 \leq k \leq \ell} \Z{\Gamma} \cdot b_k
 \xrightarrow{\delta_1}\Z{\Gamma} \to \Z \to 0.
\end{equation}
In the above, 
$b_{(i,j)}$ and $b_k$ denote the basis elements of the free modules of the resolution
in dimensions 2 and 1, respectively.
To simplify notation,
set $Q =\Gamma_{\ab}$ and $R = \Z{Q}$,
and put  $d_2 = \id_R \otimes_{R}\delta_2$ 
and $d_1 = \id_R \otimes_{R}\delta_1$.
The group $H_1(\Gamma, \Z{\Gamma_{\ab}})$ is then the homology group 
at the middle term of the complex
\begin{equation}
\label{eq:Complex-for-homology}
\bigoplus\nolimits_{(i,j)} R \cdot b_{(i,j)} 
 \xrightarrow{d_2}
\bigoplus_{1 \leq k \leq \ell} R \cdot b_k 
 \xrightarrow{d_1} R
\end{equation}

\subsubsection{Differential $d_1$ and its kernel}
\label{sssec:d1-and-ker-d1}
Two cases arise, 
depending on whether $Q$ is infinite cyclic or free abelian of rank 2.
In the first case,
the group $Q$ is generated by the common image, say $s$,  
of the standard generators $s_1$,  \ldots, $s_\ell$  in $Q$. 
The differential $d_1$ is then given by multiplication by the column vector 
\[
\left(1-s, 1-s, \ldots, 1-s\right)
\]
and an easy calculation shows that
\begin{equation}
\label{eq:kernel-d1-rank-1}
\ker d_1 
= 
R \cdot (b_1 - b_2)  
\oplus \cdots \oplus 
R\cdot (b_{\ell-1} - b_\ell).
\end{equation}

Consider next the case 
where $Q = \Gamma_{\ab}$ is free abelian of rank 2.
The system $(\Gamma, \Delta(S, E, \lambda))$ 
is then either of type $B_\ell$ for some $\ell \geq 2$,
of type $F_4$, or of type $I_2(2n)$ for some $n \geq 3$.
If it is of type $B_\ell$ 
the standard generators $s_1$, \ldots, $s_{\ell-1}$ map onto one and the same element, say $s$, 
and $e_\ell$ maps onto an element, say $t$, 
all in such a way 
that $s$ and $t$ are free generators of the free abelian group $Q$.
The differential $d_1$ is given by multiplication by the column vector 
$\left(1-s, \ldots, 1-s, 1-t \right)$
and
\begin{equation}
\label{eq:kernel-d1-type-B}
\ker d_1 
= 
R \cdot (b_1 - b_2)  \oplus \cdots 
\oplus 
R\cdot (b_{\ell-2}-b_{\ell-1}) 
\oplus 
R\cdot ((1-t)b_{\ell-1} + (s-1)b_\ell),
\end{equation}
as a short calculation will confirm.
The case of a system of type $F_4$ is similar:
the generators $s_1$ and $s_2$ map onto the same element, say $s$, of $Q$,
the generators  $s_3$ and $s_4$ go to the same element $t$, say, 
and $s$, $t$ form a basis of $Q$.
The differential $d_1$ is given by multiplication by the column vector 
$\left(1-s,1-s,  1-t, 1-t \right)$
and
\begin{equation}
\label{eq:kernel-delta1-type-F4}
\ker d_1 
= 
R \cdot (b_1 - b_2)  
\oplus 
R\cdot ((1-t) b_2 + (s-1)b_3) 
\oplus 
R\cdot (b_3 - b_4).  
\end{equation}
If, finally, $\Gamma$ is of type $I_2(2n)$ we are in the case already treated 
in section \ref{ssec:Two-generator-Artin-systems}. 
\subsubsection{Differential $d_2$ and its image}
\label{sssec:d2-and-im-d2}
The differential $d_2$ is given by a matrix with $\tfrac{1}{2}(\ell-1)\ell$ rows,
one row for each relation in the standard presentation,
and $\ell$ columns.
To find the rows of the matrix, 
one computes the partial derivatives of the defining relators of the system in question.
The standard relations of every irreducible Artin system of finite type, 
distinct from those of type $I_2$, 
have one of 4 forms, namely
\begin{equation}
\label{eq:List-relations}
\left.
\begin{aligned}
s_i s_j &= s_j s_i, 
&
s_i s_js_i &= s_j s_i s_j, \\
(s_i s_j)^2 &= (s_j s_i)^2, 
&
(s_i s_j)^2 s_i &= s_j (s_i  s_j )^2.
\end{aligned}
\hspace*{2mm}\right \rbrace
\end{equation}
Corresponding relators are
\begin{equation}
\label{eq:List-relators}
\left.
\begin{aligned}
r_2 &= s_i s_j \cdot s_i^{-1} s_j^{-1}, &
r_3 &= s_i s_j s_i \cdot  s_j^{-1} s_i^{-1}s_j^{-1}, \\
r_4 &=(s_i s_j)^2 \cdot  (s_i^{-1} s_j^{-1})^2, &
r_5 &= (s_i s_j)^2 s_i \cdot  s_j^{-1} (s_i^{-1}  s_j^{-1})^2.
\end{aligned}
\hspace*{2mm}\right \rbrace
\end{equation}
The non-zero partial derivatives of the relators $r_2$ and $r_3$ are
\begin{align*}
D_{s_i}(r_2)  &= 1 - s_i s_j s_i^{-1}, 
&
D_{s_j} (r_2) &= s_i - r_2, \\
D_{s_i}(r_3)  &= 1 + s_i s_j - r_3 s_j, 
&
D_{s_j} (r_3) &= s_i - s_i s_j s_i s_j^{-1} - r_3, 
\end{align*}
while those of the relators $r_4$ and $r_5$ are
\begin{align*}
D_{s_i}(r_4)  &= 1 + s_i s_j - (s_i s_j)^2 s_i^{-1} - r_4 s_j,\\
D_{s_j} (r_4) &= s_i +s_i s_j s_i - (s_is_j)^2s_i^{-1} s_j^{-1} - r_4,\\
D_{s_i}(r_5)  
&= 
1 + s_i s_j + (s_is_j)^2
- (s_is_j)^2 s_i \cdot s_j^{-1} s_i^{-1} - r_5s_j ,\\
D_{s_j} (r_5) 
&=
 s_i +s_i s_j s_i - (s_is_j)^2s_i \cdot s_j^{-1} - 
 (s_is_j)^2s_i\cdot  s_j^{-1}s_i^{-1} s_j^{-1}  -r_5.
\end{align*}

Now it is not these partial derivatives
that enter into the description of $d_2$,
but there images under the canonical map 
$\widehat{\phantom{-}} \colon \Z{\Gamma} \epi \Z{\Gamma_{\ab}}$.
Two cases arise, 
depending on whether 
the length of the relator is twice an odd number or twice an even number.
In the first case, 
the generators $s_i$ and $s_j$ map onto the same element $s$, say, 
of $Q = \Gamma_{\ab}$
and the corresponding row in the matrix $M$  is
\begin{align}
&(1 - s + s^2) (b_i - b_j)&  \text{ for } &r_3,
\label{eq:Row-for-r3}\\
&(1 - s + s^2 -s^3 + s^4) (b_i - b_j) &\text{ for } &r_5.
\label{eq:Row-for-r5}
\end{align}

Consider next the relator $r_2$.
Then $s_i$ and $s_j$  can map onto the same element 
or onto $\Z$-linearly independent elements, 
depending on the type of the system.
If $\Gamma_{\ab}$ is infinite cyclic, 
they map onto the same element
and the corresponding row of the matrix $M$ is
\begin{equation}
\label{eq:Relator-r2-rank-1}
(1 - s) (b_i - b_j).
\end{equation}
If, on the other hand, $\Gamma_{\ab}$ is free abelian of rank 2,
the generators can map onto the same element,
in which case the corresponding row will have the form
\eqref{eq:Relator-r2-rank-1}.
If the generators map onto linearly independent elements,
say $s_i$ has image $s$ and $s_j$ has image $t$,
the row in the matrix $M$ will be 
\begin{equation}
\label{eq:Relator-r2-rank-2}
(1 - t) b_i  + (s-1) b_j .
\end{equation}

Assume, finally, that the relator is $r_4$.
Then $\Gamma$ is either of type $B_\ell$ for some $\ell  \geq 2$ 
or of type $F_4$ or of type $I_2(4)$.
In all three cases, the generators $s_i$ and $s_j$ map onto linearly independent elements of $Q$ 
and the associated row in the matrix $M$ is
\begin{equation}
\label{eq:Row-for-r4}
(1 + st) \left((1-t)b_i  + (s-1) b_j \right).
\end{equation}
%
\subsection{Artin systems with 4 generators}
\label{ssec:Artin-systems-with-4-generators}
%
Suppose  $(\Gamma, \Delta(S, E, \lambda))$ is an Artin system of finite type 
with \emph{infinite cyclic abelianisation}.
One goal of this Section 
\ref{sec:Artin-systems-of-finite-type}
is to show 
that the derived group of $\Gamma$ is perfect,
unless the system belongs to a small group of exceptions.
These exceptions arise only 
if the number of standard generators is small,
the exact bound depending on the type.

In this section, 
Artin systems with 4 generators will be investigated;
later on, the obtained results will be used in an inductive argument.
I begin with a system of type $A_4$ 
and then pass on to a generalization of this system,
a generalization which covers, in particular, the system of type $H_4$.
%
\subsubsection{Artin system of type $A_4$}
\label{sssec:Artin-system-of-type-A4}
%
Let $(\Gamma, \Delta(S, E, \lambda))$ be an Artin system of type $A_4$ 
with standard generating set $S = \{s_1, s_2, s_3, s_4\}$.
The group has 6 defining relations,
the three commuting relations
\begin{equation}
\label{eq:Commuting-relations-A4}
s_1 s_3 = s_3 s_1, \quad
s_2 s_4 = s_4 s_2, \quad
s_1 s_4 = s_4 s_1,
\end{equation}
and the three relations
\begin{equation}
\label{eq:Other-relations-A4}
s_1 s_2 s_1 = s_2 s_1 s_2, \quad
s_2 s_3 s_2 = s_3 s_2 s_3, \quad
s_3 s_4 s_3= s_4 s_3s_4.
\end{equation}
According to the plan laid out in section 
\ref{ssec:Artin-groups-of-finite-type-basics},
we shall compute the homology group $\ker d_1/\im d_2$.
In order to describe the kernel of $d_1$ and the image of $d_2$ succinctly,
set $Q =\Gamma_{\ab}$ and $R = \Z{Q}$. 
The generators $s_i$ in $S$ map to the same element, say $s$, of $Q$ 
and
\begin{equation}
\label{eq:ker-d1-type-A4}
\ker d_1 
= 
R \cdot (b_1- b_2)  \oplus R \cdot (b_2- b_3)  \oplus R \cdot (b_3- b_4) 
\end{equation}
by formula \eqref{eq:kernel-d1-rank-1}.
So $\ker d_1$ is a free $R$-module with basis 
\begin{equation}
\label{eq:Introducing-vectors-u-v-w}
u = b_1-b_2, \quad v = b_2 - b_3 \quad \text{and}\quad w = b_3 - b_4.
\end{equation}
The differential $d_2$ is given by a matrix $M(A_4)$ with entries in $R$;
it has 6 rows, one row for each defining relator, and 4 columns.
By equation \eqref{eq:Relator-r2-rank-1},
the rows corresponding to the relations listed in
\eqref{eq:Commuting-relations-A4}
are
\begin{equation}
\label{eq:Rows-L1-L2-L3-of-M(A4)}
L_1 = (1-s) (b_1 - b_3),\hspace*{2mm}
L_2 = (1-s) (b_2 - b_4),\hspace*{2mm}
L_3 = (1-s)(b_1-b_4).
\end{equation}
Now, 
one is not interested in the individual rows of the matrix $M(A_4)$,
but in the $\Z{Q}$-linear combination of these rows.
Here a surprise happens: 
\begin{equation}
\label{eq:Three-linear-combinations-A4}
L_3 - L_2  = (1-s) u, \quad 
L_1 + L_2 - L_3 = (1-s) v, \quad
L_3 -L_1 = (1-s)w . 
\end{equation}
These three rows are the rows
one would obtain if the commutators 
$[s_1,s_2]$, $[s_2, s_3]$ and $[s_3, s_4]$ 
were relators of the group $\Gamma$.

Consider next the three relations displayed in equation 
\eqref{eq:Other-relations-A4}.
In view of equation \eqref{eq:Row-for-r3},
they give rise to the rows
\begin{equation}
\label{eq:Rows-4to6-of-M(A4)}
L_4 = (1-s + s^2) u, \quad
L_5 = (1-s + s^2) v, \quad
L_6 = (1-s + s^2) w.
\end{equation}
The image of $d_2$ contains therefore the linear combinations 
\begin{equation}
\label{eq:Generators-im-d2-for-A4}
L_4 +  s \cdot (L_3-L_2) = u,
\quad
L_5 +s \cdot (L_1 + L_2 - L_3) = v,
\text{ and } 
L_6 + s \cdot (L_3 -L_1 )= w.
\end{equation}
A comparison of equations 
\eqref{eq:ker-d1-type-A4}, \eqref{eq:Introducing-vectors-u-v-w} 
and \eqref{eq:Generators-im-d2-for-A4} 
finally shows that $\im d_2 = \ker d_1$.
The homology group $H_1(\Gamma, \Z{\Gamma_{\ab}})$ is thus trivial;
equivalently, $\Gamma'$ is perfect.
%
\subsubsection{More Artin systems with 4 generators}
\label{sssec:More-Artin-systems-with-4-generators}
%
If one scrutinizes the calculation of the the homology group $\ker d_1/\im d_2$,
carried out in the preceding section,
one detects that this calculation can be adapted
to other Artin systems with four generators,
provided the labels of the edges 
$\{s_1, s_2\}$, $\{s_2, s_3\}$ and $\{s_3, s_4\}$ 
are odd numbers greater than 2.
Indeed, 
let $F$ denote the free group on $S = \{s_1, s_2, s_3, s_4\}$
and assume the Artin system $(\Gamma, S)$ 
has the following six relations:
the three commuting relations \eqref{eq:Commuting-relations-A4}
and the relations
\begin{equation}
\label{eq:Other-relations-new-Artin-groups}
(s_1 s_2)^{m} s_1= s_2 (s_1 s_2)^{m}, \quad
(s_2 s_3)^{n} s_2= s_3 (s_2 s_3)^{n}, \quad
(s_3 s_4)^{p} s_3= s_4 (s_3 s_4)^{p},
\end{equation}
where $m$, $n$ and $p$ are positive natural numbers.
The abelianisation of $\Gamma$ is infinite cyclic 
and the generators $S$ map to the same element of 
$Q = \Gamma_{\ab}$; call it $s$.  
The three commuting relations imply,
as in section \ref{sssec:Artin-system-of-type-A4},
that the image of $d_2 \colon \Z{Q}^6 \to \Z{Q}^4$ 
contains the vectors
\[
v_{12} =(1-s) (b_1- b_2), \quad v_{23} =(1-s) (b_2-b_3), \quad 
v_{34}=(1-s)(b_3-b_4).
\]

Consider now one of the relations in equation 
\eqref{eq:Other-relations-new-Artin-groups}, 
say the first one. 
Set 
\[
U = (s_1s_2)^m s_1\quad\text{and}\quad V =s_2(s_1s_2)^m,
\]
and denote the corresponding relator $U \cdot V^{-1}$ by $r$.
If $D$ is one of the derivations $D_{s_1}$ or $D_{s_2}$, 
the following formulae hold in the group ring of the free group $F$
on $\{s_1, s_2 \}$:
\begin{align}
D(U \cdot V^{-1}) &= D(U) + U\cdot D(V^{-1}), \quad
D(V^{-1}) = - V^{-1} D(V), \quad \text{and so} \notag
\\
D(U \cdot V^{-1}) &= D(U) - U V^{-1} \cdot D(V)
\label{eq:Identity-for-derivations}
\end{align}
(see, \eg \cite[p.\;549, formulae (1.2') and 1.6)]{Fox53}).
It follows that
\begin{align*}
D_{s_1}(U) &= 1 + s_1s_2 + \cdots + (s_1s_2)^m,\\
D_{s_2}(U)&= s_1 + s_1s_2s_1 + \cdots + (s_1 s_2)^{m-1}s_1, \\
D_{s_1}(V) &=s_2 + s_2 s_1 s_2 + \cdots + (s_2 s_1)^{m-1} s_2,\\
D_{s_2}(V) &= 1 + s_2 s_1 + \cdots + (s_2 s_1)^m .
\end{align*}
By using the identity \eqref{eq:Identity-for-derivations}
and the above calculations,
one can find the images of the elements $D_{s_i}(U\cdot V^{-1})$
under the canonical map $\ab \colon \Z{F} \to \Z{\Gamma_{\ab}}$.
Since the generators $s_k$  map to the same element $s \in \Gamma_{\ab}$,
these images are:
\begin{align*}
\widehat{D_{s_1}}(U \cdot V^{-1}) 
&= 
\widehat{D_{s_1}}(U) - \widehat{D_{s_1}} (V)\\
&=
(1 + s^2 + \cdots +  s^{2m}) - (s + s^3 + \cdots + s^{2m-1}), 
\quad \text{and}\\
\widehat{D_{s_2}}(U \cdot V^{-1}) 
&= 
\widehat{D_{s_2}}(U) - \widehat{D_{s_2}} (V)\\
&=
(s + s^3 + \cdots + s^{2m-1}) - ( 1 + s^2 + \cdots + s^{2m}).
\end{align*}
The row $L_4$ corresponding to the relator $r$ is therefore
\begin{equation}
\label{eq:L4-for-general-Artin-group}
L_4 = (1 - s + s^2 +\cdots  +  (-s^{2m-1}) + s^{2m}) \cdot (b_1 -b_2)
\end{equation}
and so the image  of $d_2$ contains the linear combination 
\[
L_4 + (s + s^3 + \cdots + s^{2m-1})\cdot (1-s) (b_1-b_2) = b_1 - b_2.
\]
The situation is quite analogous for the rows $L_2$, $L_5$ 
and for the rows $L_3$, $L_6$.
The above calculations and Proposition 
\ref{prp:Artin-systems-with-infinite-cyclic-abelianisation}
therefore establish 
\begin{thm}
\label{thm:More-groups-with-perfect-derived-group}
Let $(\Gamma, \Delta(S, E, \lambda))$ be an Artin system 
with standard generating set  $S = \{s_1, \ldots, s_4\}$,
with commuting relations 
\eqref{eq:Commuting-relations-A4}
and relations
\eqref{eq:Other-relations-new-Artin-groups}.
Then the derived group of $\Gamma$ is finitely generated and perfect.
\end{thm}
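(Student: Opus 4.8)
The statement is precisely the conclusion that the computations of Sections \ref{sssec:Artin-system-of-type-A4} and \ref{sssec:More-Artin-systems-with-4-generators} have been leading up to, so the plan is to assemble those computations into a proof of the two separate assertions: finite generation of $\Gamma'$, and perfectness of $\Gamma'$.

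\emph{Finite generation.} First I would record that $\Gamma_{\ab}$ is infinite cyclic. Each of the three relations in \eqref{eq:Other-relations-new-Artin-groups} is length-balanced and, in $\Gamma_{\ab}$, forces the two generators it involves to coincide; hence all four of $s_1,\dots,s_4$ are identified in $\Gamma_{\ab}$, and their common image has infinite order because the assignment $s_k\mapsto 1$ extends to an epimorphism $\Gamma\epi\Z$. The odd-label subgraph $(S,E_{\odd})$ consists of the edges $\{s_1,s_2\},\{s_2,s_3\},\{s_3,s_4\}$ with labels $2m{+}1$, $2n{+}1$, $2p{+}1$, which is already a spanning path of $S$, hence connected and itself a spanning tree $E_0$. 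Proposition \ref{prp:Artin-systems-with-infinite-cyclic-abelianisation} applied with this $E_0$ then shows that $\Gamma'$ is generated by at most $f=2m+2n+2p$ elements, so it is finitely generated (and the metabelian top of $\Gamma$ is polycyclic).

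\emph{Perfectness.} Here I would compute $H_1(\Gamma',\Z)$ and show that it vanishes. By Shapiro's Lemma, $H_1(\Gamma',\Z)\cong H_1(\Gamma,\Z{\Gamma_{\ab}})$, and this group is the homology $\ker d_1/\im d_2$ at the middle term of the complex \eqref{eq:Complex-for-homology}, with $R=\Z{Q}$ and $Q=\Gamma_{\ab}=\langle s\rangle$. By \eqref{eq:kernel-d1-rank-1}, $\ker d_1$ is the free $R$-module on $u=b_1-b_2$, $v=b_2-b_3$, $w=b_3-b_4$, so it suffices to exhibit $u,v,w$ inside $\im d_2$. For this I would use the rows of $d_2$ exactly as in Section \ref{sssec:More-Artin-systems-with-4-generators}: the three commuting relations \eqref{eq:Commuting-relations-A4} give rows whose $R$-linear combinations are $(1-s)u$, $(1-s)v$, $(1-s)w$ (the combination \eqref{eq:Three-linear-combinations-A4} of the $A_4$ case), while the Fox-derivative computation for the relators in \eqref{eq:Other-relations-new-Artin-groups} produces the rows $L_4=(1-s+s^2-\cdots-s^{2m-1}+s^{2m})u$ of \eqref{eq:L4-for-general-Artin-group}, and similarly $L_5$ an analogous multiple of $v$ (with exponent $2n$) and $L_6$ of $w$ (with exponent $2p$). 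The telescoping identity $(1-s+s^2-\cdots+s^{2m})+(s+s^3+\cdots+s^{2m-1})(1-s)=1$ in $R$ then yields $u=L_4+(s+s^3+\cdots+s^{2m-1})\cdot(1-s)u\in\im d_2$, and likewise $v,w\in\im d_2$. Therefore $\im d_2=\ker d_1$, so $H_1(\Gamma,\Z{\Gamma_{\ab}})=0$, i.e. $\Gamma'$ is perfect.

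The only points that require a little care — and neither is a genuine obstacle — are the bookkeeping that the three rows arising from \eqref{eq:Commuting-relations-A4} combine over $R$ into exactly $(1-s)u$, $(1-s)v$, $(1-s)w$ (which hinges on the three commuting pairs being precisely the non-edges $\{s_1,s_3\},\{s_2,s_4\},\{s_1,s_4\}$ of the path $s_1{-}s_2{-}s_3{-}s_4$), and the verification that the alternating geometric sum telescopes as stated. Both are identities in the one-variable Laurent ring $R=\Z{\langle s\rangle}$, and since $\ker d_1$ is free of rank $3$ while the six defining relators already map, through the Fox calculus, onto a generating set of it, I expect no step harder than this.
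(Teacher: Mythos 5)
Your proposal is correct and takes essentially the same route as the paper: finite generation is obtained by applying Proposition \ref{prp:Artin-systems-with-infinite-cyclic-abelianisation} to the odd-labelled spanning path $s_1\text{--}s_2\text{--}s_3\text{--}s_4$, and perfectness by showing $u, v, w \in \im d_2$ via the commuting-relation rows $(1-s)u$, $(1-s)v$, $(1-s)w$ together with the telescoping combination with the Fox-derivative rows \eqref{eq:L4-for-general-Artin-group}. This is exactly the computation carried out in sections \ref{sssec:Artin-system-of-type-A4} and \ref{sssec:More-Artin-systems-with-4-generators} of the paper.
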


As will be seen in the sequel,
Theorem \ref{thm:More-groups-with-perfect-derived-group} 
allows one to deduce that the derived group of many Artin groups
with infinite cyclic abelianisations is perfect.
%
\subsection{Artin systems of type $A$}
\label{ssec:Artin-systems-of-type-A}
%
Let $(\Gamma, \Delta(S, E, \lambda))$ 
be an Artin system of type $A_\ell$ for some integer $\ell \geq 1$.
If $\ell =1$, the group $\Gamma$ is infinite cyclic; 
so its derived group is trivial and hence perfect. 
If $\ell = 2$,
the system is one of the systems treated in Proposition 
\ref{prp:Two-generator-Artin-groups-with-odd-label}
and its derived group is free of rank 2.
The Artin system of type $A_4$ has been dealt with in section
\ref{sssec:Artin-system-of-type-A4}; 
those with $\ell = 3$
or $\ell > 4$ will be discussed in the next two sections.
%
\subsubsection{Artin system of type $A_3$}
\label{sssec:Artin-system-of-type-A3}
%
The group $\Gamma$ has the generators $s_1$, $s_2$, $s_3$
and the defining relators
\begin{equation}
\label{eq:Relators-A3}
r_2 = s_1 s_3 \cdot s_1^{-1} s_3^{-1},
\quad
r_3 = s_1 s_2 s_1 \cdot  s_2^{-1} s_1^{-1}s_2^{-1},
\quad
\tilde{r}_3 = s_2 s_3 s_2 \cdot  s_3^{-1} s_2^{-1}s_3^{-1}.
\end{equation}
The kernel of $d_1$ is a free submodule of 
$R b_1 \oplus R  b_2 \oplus R  b_3$,
namely
\begin{equation}
\label{eq:Kernel-d1-type-A4}
\ker d_1 = R \cdot (b_1 - b_2) \oplus  R \cdot (b_2 - b_3)
\end{equation}
(see formula \eqref{eq:kernel-d1-rank-1}).
Set $u = b_1 - b_2$ and $v = b_2-b_3$.
In view of equations \eqref{eq:Row-for-r3} and \eqref{eq:Relator-r2-rank-1},
the image of the differential $d_2$ is then generated by the three vectors
\[
x = (1 - s + s^2)\cdot u,
\quad
y=  (1 - s + s^2)\cdot v,
\quad
z= (1-s) (u + v).
\]
Since $x + y = (1-s+s^2) (u + v)  = (u+v) - s \cdot z$, 
one has $u + v \in \im d_2$ and so 
$ \im d_2 = R \cdot (u+v) + R \cdot  (1 - s + s^2)u$.
It follows that the homology group $\ker d_1 / \im d_2$ 
is isomorphic to  $R / R(1 - s + s^2)$ 
and hence free abelian of rank 2.
%
%
\subsubsection{A generalization}
\label{sssec:Generalization}
%
Let $(\Gamma, \Delta(S, E, \lambda))$ be an Artin system of type $A_\ell$ 
with standard generating set $S = \{s_1, s_2, \ldots, s_\ell\}$,
and assume that $\ell \geq 4$.
The aim is to show that $\Gamma'$ is perfect.
This goal can be reached by (at least) two different roads.
One can imitate the proof given in section \ref{sssec:Artin-system-of-type-A4} 
but with $\ell$ instead of 4 generators $s_k$.
Alternatively,
one can deduce the claim from Theorem 
\ref{thm:Artin-systems-with-perfect-derived-groups}
below.

I shall travel on the second road
and use, in my proof,  
a distance function $d$ on a spanning tree $T$ of a combinatorial graph,
defined as follows: let $v_1$ and $v_2$ be vertices of $T$ and 
count the number of edges of the geodesic path from $s_1$ to $s_2$.
The distance is then given by the formula
\begin{equation}
\label{eq:Definition-distance}
d(s_1, s_2 ) = \text{ number of edges of the geodesic path from $s_1$ to $s_2$}.
\end{equation}

\begin{thm}
\label{thm:Artin-systems-with-perfect-derived-groups}
Let $(\Gamma, \Delta(S, E, \lambda))$ be an Artin system
and assume the following hypotheses are satisfied:
\begin{enumerate}[(i)]
\item the graph $(S, E)$ is connected and admits a spanning tree $T$ 
all whose edges are labelled by odd numbers;
\item if $s$, $s'$ are vertices of $T$ with distance 2,
the commuting relation $s s' = s' s$ holds in $\Gamma$;
\item there exists an edge $e_0 = \{s_1, s_2\}$ in $T$ such 
that the difference $u = b_1 - b_2$ of the basis vectors $b_1$, $b_2$ 
associated to $s_1$ and $s_2$, respectively, 
lies in the image of the differential $d_2$.
\end{enumerate}
Then the derived group of $\Gamma$ is perfect.
\end{thm}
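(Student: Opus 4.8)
The plan is to compute the abelianised derived group $\Gamma'_{\ab}=H_1(\Gamma',\Z)$ by means of the homology complex \eqref{eq:Complex-for-homology} and to show that hypotheses (i)--(iii) force $\im d_2=\ker d_1$, so that $\Gamma'_{\ab}=0$ and $\Gamma'$ is perfect. First I would fix the set-up. As the spanning tree $T$ consists of edges carrying odd labels, the subgraph $(S,E_{\odd})$ contains a spanning tree and is therefore connected, so by Remark \ref{remarks:Artin-system} (or by Proposition \ref{prp:Artin-systems-with-infinite-cyclic-abelianisation}) the group $Q=\Gamma_{\ab}$ is infinite cyclic; write $s$ for the common image in $Q$ of all standard generators and put $R=\Z{Q}$. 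By Shapiro's Lemma $\Gamma'_{\ab}\cong H_1(\Gamma,R)$, and this group equals $\ker d_1/\im d_2$ for the complex \eqref{eq:Complex-for-homology}. Since $R$ is an integral domain and $1-s\neq 0$, formula \eqref{eq:kernel-d1-rank-1} exhibits $\ker d_1$ as $\{\sum_k c_k b_k\mid \sum_k c_k=0\}$, a free $R$-module of rank $\ell-1$; because $T$ is a spanning tree, the oriented edges of $T$ already furnish a free $R$-basis of $\ker d_1$. Hence it suffices to establish
\[
(\star)\qquad b_x-b_y\in\im d_2\quad\text{for every edge }\{x,y\}\text{ of }T.
\]

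The second step is to collect the relevant rows of $d_2$. For an edge $e=\{x,y\}$ of $T$ with odd label $\lambda(e)=2m+1$ (necessarily $m\ge1$), both endpoints of $e$ have image $s$ in $Q$, and the local computation carried out in Section \ref{sssec:More-Artin-systems-with-4-generators}, notably formula \eqref{eq:L4-for-general-Artin-group}, shows that the Artin relation of $e$ contributes to $\im d_2$ the element $p_e(s)\,(b_x-b_y)$, where $p_e(X)=\sum_{k=0}^{2m}(-X)^k$. The decisive point is $p_e(1)=1$: then $1-p_e(s)$ is divisible by $1-s$ in $R$, so there is an element $h_e(s)\in R$ with
\[
1=p_e(s)+h_e(s)\,(1-s),
\]
that is, $p_e(s)$ and $1-s$ generate the unit ideal of $R$. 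On the other hand, by hypothesis (ii) any two vertices $x,z$ of $T$ at distance $2$ commute in $\Gamma$; adjoining the relation $xz=zx$ to the standard presentation if it is not already among the defining relators — which does not affect $H_1(\Gamma,R)$, this group being intrinsic to $\Gamma$ — and invoking \eqref{eq:Relator-r2-rank-1}, we obtain $(1-s)\,(b_x-b_z)\in\im d_2$.

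The heart of the argument is a \emph{propagation} along $T$. Let $\mathcal{E}$ denote the set of edges $\{x,y\}$ of $T$ with $b_x-b_y\in\im d_2$; hypothesis (iii) says $e_0\in\mathcal{E}$. Suppose $e=\{y,z\}$ is an edge of $T$ distinct from an edge $e'=\{x,y\}\in\mathcal{E}$ but sharing the vertex $y$ with it. Since $T$ is a tree, $x\neq z$ and the path $x-y-z$ is the $T$-geodesic from $x$ to $z$, so these vertices are at distance $2$; hence $(1-s)(b_x-b_z)\in\im d_2$ by the previous paragraph. Subtracting $(1-s)(b_x-b_y)$, which lies in $\im d_2$ because $\im d_2$ is an $R$-submodule and $e'\in\mathcal{E}$, one gets $(1-s)(b_y-b_z)\in\im d_2$. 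Combining this with $p_e(s)(b_y-b_z)\in\im d_2$ and the identity $1=p_e(s)+h_e(s)(1-s)$ gives $b_y-b_z\in\im d_2$, i.e. $e\in\mathcal{E}$. As every edge of $T$ can be reached from $e_0$ by a finite chain of successively adjacent edges ($T$ being connected, its line graph is connected), it follows that $\mathcal{E}=E(T)$, which is exactly $(\star)$. Therefore $\im d_2=\ker d_1$, so $H_1(\Gamma,R)=0$ and $\Gamma'=\Gamma''$.

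The computational ingredients are already in place — formula \eqref{eq:L4-for-general-Artin-group} for the Artin rows, \eqref{eq:Relator-r2-rank-1} for the commuting rows, and \eqref{eq:kernel-d1-rank-1} for $\ker d_1$ — so the point that needs real attention is the bookkeeping of the propagation: one must recognise that hypothesis (ii) supplies precisely the multiples $(1-s)(b_x-b_z)$ needed to carry the membership $b_x-b_y\in\im d_2$ across a vertex of $T$, while the odd label of the next edge provides, through $p_e(1)=1$, the coprimality that removes the leftover factor $1-s$. A smaller but genuine point is to justify enlarging the defining relation set by the commutators of hypothesis (ii); this is harmless since $H_1(\Gamma,R)$ does not depend on the chosen presentation.
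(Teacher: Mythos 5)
Your proof is correct and takes essentially the same route as the paper's: you propagate the membership $b_x-b_y\in\operatorname{im}d_2$ along the spanning tree $T$, using the distance-2 commuting relations to place $(1-s)(b_y-b_z)$ in the image of $d_2$ and the odd-label Artin row together with $p_e(1)=1$ to strip off the factor $1-s$, which is precisely the paper's induction along a geodesic issuing from $e_0$ (with the unit-ideal identity $1=p_e(s)+h_e(s)(1-s)$ replacing the explicit combination used in the section on four-generator systems). The additional bookkeeping you supply --- the explicit description of $\ker d_1$ by tree-edge differences and the remark that adjoining the redundant commuting relators of hypothesis (ii) does not change $H_1(\Gamma',\mathbb{Z})$ --- merely makes explicit steps the paper leaves implicit.
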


\begin{proof}
Let $ e\in T$ be an edge distinct from $e_0$,
and let $\gamma$ be a geodesic path from one of the endpoints of $e_0$
to one of the endpoints of $e$;
let $e_1$, \ldots, $e_k$ be the edges making up $\gamma$.
We may assume that $e_0 \neq e_1$ and $e_k \neq e$
and shall prove by induction on $k$ that the difference of the basis vectors corresponding to the end points of $e$ lies in the image of $d_2$.
If $k = 0$, then $e_0$ and $e$ have an endpoint, say $s_*$ in common.
Let $s$ be the endpoint of $e_0$ distinct from $s_*$; 
similarly, let $s'$ be the endpoint of $e$ distinct from $s_*$
and let $b$, $b_*$ and $b'$ denote the corresponding basis vectors.
The distance $d(s, s')$ is 2,
and so the generators $s$ and $s'$ commute by hypothesis (ii).
By formula \eqref{eq:Relator-r2-rank-1},
the image of $d_2$ contains therefore the vector
\[
(1-s) (b - b') = (1-s) (b-b_*) + (1-s) (b_* - b').
\]
Now, by hypothesis,  
the image of $d_2$ contains the vector $u = b_1 - b_2$
and this vector coincides, up to a sign, with $b-b_*$,
whence  $(1-s) (b_* - b') \in \im d_2$.
Hypothesis (i) implies next 
that the group $\Gamma$ satisfies the relation $(s' s_*)^m s' = s_* (s' s_*)^m$
for some positive integer $m$
and so it follows, 
as in section \ref{sssec:More-Artin-systems-with-4-generators},
that $b_* -b' \in \im d_2$.
If $k > 0$, the induction hypothesis guarantees 
that the difference $b_{0, e_k} - b_{1, e_k}$ lies in $\im d_2$, 
where $s_{0, e_k}$ and $s_{1, e_k}$ are the end points of the edge $e_k$,
and so it follows as before that $b_* -b' \in \im d_2$.
\end{proof}

There is one kind of Artin system  $(\Gamma, \Delta(S, E, \lambda))$ 
to which the above proposition applies 
and which will occur repeatedly in the sequel:
by section \ref{sssec:Artin-system-of-type-A4},
hypothesis (iii) holds, in particular, if the system $(\Gamma, \Delta(S, E, \lambda))$ 
contains a subsystem of type $A_4$.
Theorem \ref{thm:Artin-systems-with-perfect-derived-groups}
has therefore the following
\begin{crl}
\label{crl:Artin-systems-with-perfect-derived-groups}
Let  $(\Gamma, \Delta(S, E, \lambda))$ be an Artin system  
and assume the following hypotheses are satisfied:
\begin{enumerate}[(i)]
\item the graph $(S,E)$ is connected and admits a spanning tree $T$ 
all whose edges are labelled by odd numbers;
\item if $s$, $s'$ are vertices of $T$ with distance 2,
then the commuting relation $s s' = s' s$ holds in $\Gamma$;
\item there exist 4 generators in $S$, 
say $s_1'$, $s'_2$, $s'_3$ and $s'_4$,
which satisfy the 6 standard relations of an Artin system of type $A_4$
and have the property that the three edges $\{s'_1, s'_2\}$, $\{s'_2, s'_3\}$ and $\{s'_3, s'_4\}$ form a segment contained in $T$.
\end{enumerate}
Then the derived group of $\Gamma$ is perfect.
\end{crl}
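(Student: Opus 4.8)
The plan is to deduce the corollary directly from Theorem \ref{thm:Artin-systems-with-perfect-derived-groups}. Hypotheses (i) and (ii) of the corollary are, word for word, those of the theorem, so the entire task is to check that hypothesis (iii) of the corollary supplies an edge $e_0$ of $T$ for which the difference of the two dimension-$1$ basis vectors attached to its endpoints lies in $\im d_2$ --- that is, hypothesis (iii) of the theorem. First I would record a preliminary remark: since $T$ is a spanning tree of $(S,E)$ all of whose edges carry odd labels, $T$ lies inside the subgraph $(S, E_{\odd})$ of \eqref{eq:Definition-Eodd-Eeven}; hence that subgraph is connected, and by the first of Remarks \ref{remarks:Artin-system} the abelianisation $\Gamma_{\ab}$ is infinite cyclic. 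Consequently every standard generator of $\Gamma$ has the same image, call it $s$, in $Q = \Gamma_{\ab}$, the ring $R = \Z{Q}$ is the Laurent polynomial ring in $s$, and we are in the case of section \ref{sssec:d1-and-ker-d1} where $Q$ is infinite cyclic. In particular, writing $b'_1, b'_2, b'_3, b'_4$ for the dimension-$1$ basis vectors attached to the four generators $s'_1, s'_2, s'_3, s'_4$ of hypothesis (iii), the computation of section \ref{sssec:Artin-system-of-type-A4} applies verbatim once every $b_k$ there is replaced by $b'_k$.

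Next I would run that computation inside the ambient group $\Gamma$. By hypothesis the six standard relations of an Artin system of type $A_4$ --- the three commuting relations and the three braid relations among $s'_1, \ldots, s'_4$ --- hold in $\Gamma$; hence the Fox-derivative vectors of the corresponding six relators lie in $\ker\delta_1 = \im\delta_2$ by exactness of the resolution \eqref{eq:Beginning-of-resolution}, and so their images under $\widehat{\phantom{-}}\colon \Z{\Gamma}\epi R$ lie in $\im d_2$. By the formulas of section \ref{sssec:d2-and-im-d2}, these images are exactly the six vectors $L_1, \ldots, L_6$ of equations \eqref{eq:Rows-L1-L2-L3-of-M(A4)} and \eqref{eq:Rows-4to6-of-M(A4)}, rewritten in terms of the $b'_k$. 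Forming the linear combinations of \eqref{eq:Three-linear-combinations-A4} and then of \eqref{eq:Generators-im-d2-for-A4}, one obtains $b'_1 - b'_2 \in \im d_2$ (and, likewise, $b'_2 - b'_3$ and $b'_3 - b'_4$). As the three edges $\{s'_1, s'_2\}$, $\{s'_2, s'_3\}$, $\{s'_3, s'_4\}$ form a segment contained in $T$, the edge $e_0 = \{s'_1, s'_2\}$ lies in $T$, so $b'_1 - b'_2 \in \im d_2$ is precisely hypothesis (iii) of Theorem \ref{thm:Artin-systems-with-perfect-derived-groups} with this choice of $e_0$. That theorem then yields at once that $\Gamma'$ is perfect.

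The one point that deserves care --- though I do not expect it to be a genuine obstacle --- is the transfer of the type-$A_4$ argument from the pure Artin group of type $A_4$ to the subsystem embedded in the larger group $\Gamma$. Two things must be checked: that enlarging the presentation by extra generators and relations can only enlarge $\im d_2$ (each new relator merely adds one basis element to the source of $d_2$), and that the vectors associated to the six $A_4$-relators are unaltered, the latter because a Fox derivative depends only on the relator as a word in $F(S)$ and every generator occurring in these relators is sent to the same element $s$ of $Q$. Once this bookkeeping is in place, no computation beyond the one already performed in section \ref{sssec:Artin-system-of-type-A4} is needed, and the corollary follows.
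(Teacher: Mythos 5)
Your proposal is correct and follows essentially the same route as the paper's own proof: you transfer the type-$A_4$ computation of section \ref{sssec:Artin-system-of-type-A4} to the subsystem $s'_1,\ldots,s'_4$ inside $\Gamma$ to conclude that the differences $b'_k - b'_{k+1}$ lie in $\im d_2$, which verifies hypothesis (iii) of Theorem \ref{thm:Artin-systems-with-perfect-derived-groups}, and then invoke that theorem. Your extra care about why the computation carries over (infinite cyclic abelianisation from the odd-labelled spanning tree, exactness of the resolution, and the fact that additional relators only enlarge $\im d_2$) merely makes explicit what the paper's one-sentence proof leaves implicit.
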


\begin{proof}
The calculations in section \ref{sssec:Artin-system-of-type-A4} show, 
in particular, 
that the image of the differential $d_2$ contains all the differences 
$s'_{k}- s'_{k+1}$ for $k \in \{1,2,3\}$.
Hypothesis (iii) in Theorem \ref{thm:Artin-systems-with-perfect-derived-groups} 
is thus fulfilled and so the claim follows from that theorem. 
\end{proof}
%
%
\subsubsection{Return to Artin systems of type $A$}
\label{sssec:Artin-systems-of-type-A-return}
%
The relations imposed on an Artin systems 
$(\Gamma, \Delta(S, E, \lambda))$ of type $A_\ell$ 
with $\ell \geq 4$ guarantee 
that hypotheses (i), (ii) and (iii) of the preceding corollary are satisfied.
This corollary and the calculations at the beginning of the present section 
imply therefore
\begin{prp}
\label{prp:Derived-group-of-Artin-group-of-type-A}
Let $(\Gamma, \Delta(S, E, \lambda))$ be an Artin system of type $A_\ell$ 
and let $\Gamma'$ be the derived group of $\Gamma$.
For $\ell = 2$ the group $\Gamma'$ is then a free group of rank 2  
and it is perfect for  $\ell = 1$ and for $\ell \geq 4$.
If $\ell = 3$ then $\Gamma'_{\ab}$ is free abelian of rank 2.
\end{prp}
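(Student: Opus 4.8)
The plan is to dispatch the small cases $\ell\in\{1,2,3\}$ by appealing to results already in hand, and then to treat all $\ell\ge 4$ uniformly by checking the hypotheses of Corollary \ref{crl:Artin-systems-with-perfect-derived-groups}.

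For $\ell=1$ the group $\Gamma$ is infinite cyclic, so $\Gamma'$ is trivial and hence perfect. For $\ell=2$ the sole defining relation is $s_1s_2s_1=s_2s_1s_2$, which is the relation \eqref{eq:Artin-relation-with-odd-label} with $m=1$; Proposition \ref{prp:Two-generator-Artin-groups-with-odd-label} then gives that $\Gamma'$ is free of rank $2m=2$. For $\ell=3$ the computation carried out in section \ref{sssec:Artin-system-of-type-A3} identifies $H_1(\Gamma,\Z{\Gamma_{\ab}})$ with $R/R(1-s+s^2)$, an abelian group that is free of rank $2$; by Shapiro's lemma this homology group is $\Gamma'_{\ab}$, so $\Gamma'_{\ab}$ is free abelian of rank $2$.

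For $\ell\ge 4$ I would verify conditions (i)--(iii) of Corollary \ref{crl:Artin-systems-with-perfect-derived-groups} for the defining graph of $A_\ell$, which is the line $s_1-s_2-\cdots-s_\ell$ with every edge labelled $3$. Being a tree, this graph is its own spanning tree $T$, and all its edge labels are the odd number $3$, so (i) holds. A pair of vertices $s_i,s_j$ of $T$ is at distance $2$ exactly when $|i-j|=2$, and then the standard presentation of an Artin system of type $A_\ell$ contains the commuting relation $s_is_j=s_js_i$, which gives (ii). Finally, since $\ell\ge 4$ the four generators $s_1,s_2,s_3,s_4$ satisfy precisely the six standard relations of an Artin system of type $A_4$ --- the three braid relations $s_ks_{k+1}s_k=s_{k+1}s_ks_{k+1}$ for $k\in\{1,2,3\}$ together with $s_1s_3=s_3s_1$, $s_1s_4=s_4s_1$, $s_2s_4=s_4s_2$ --- and the edges $\{s_1,s_2\},\{s_2,s_3\},\{s_3,s_4\}$ form a segment of $T$, which is (iii). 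The corollary then yields that $\Gamma'$ is perfect for every $\ell\ge 4$; for $\ell=4$ this is consistent with the direct calculation of section \ref{sssec:Artin-system-of-type-A4}.

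I do not expect any genuine obstacle here: the proof is a matter of assembling the earlier special-case computations and verifying that the path graph underlying $A_\ell$ meets the (mild) hypotheses of the corollary. The one point worth a moment's attention is hypothesis (ii): one must be sure that every distance-$2$ pair of vertices of $T$ really does correspond to a commuting pair of standard generators, and for a path graph this holds because distance $2$ in $T$ forces the index difference to be exactly $2$.
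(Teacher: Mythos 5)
Your proposal is correct and follows essentially the same route as the paper: the small cases $\ell\in\{1,2,3\}$ are settled by the earlier computations (infinite cyclic, Proposition \ref{prp:Two-generator-Artin-groups-with-odd-label}, and the $A_3$ homology calculation), and the cases $\ell\ge 4$ are handled by checking the hypotheses of Corollary \ref{crl:Artin-systems-with-perfect-derived-groups}, which is exactly the paper's argument in section \ref{sssec:Artin-systems-of-type-A-return} (resting, as yours does, on the $A_4$ computation of section \ref{sssec:Artin-system-of-type-A4}). Your explicit verification of hypotheses (i)--(iii) for the path graph merely spells out what the paper asserts in one sentence.
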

%
\subsection{Artin systems of types $D$ or $E$}
\label{ssec:Artin-systems-of-types-D-or-E}
%
Let $(\Gamma, \Delta(S, E, \lambda))$ be an Artin system of type $D_\ell$  
with $\ell \geq 4$.
If $\ell = 4$,  
the edges with labels 2 or 3 of the graph of $\Gamma$ form a triangular pyramid,
with $s_1$, $s_2$, $s_3$, say,  the vertices of the base and with apex $s_4$.
The group has the three commuting relations
\begin{equation}
\label{eq:Commuting-relations-D4}
s_1 s_2 = s_2 s_1, \quad
s_1 s_3 = s_3 s_1, \quad
s_2 s_3 = s_3 s_2
\end{equation}
and the three relations
\begin{equation}
\label{eq:Other-relations-D4}
s_1 s_4 s_1 = s_4 s_1 s_4, \quad
s_2 s_4 s_2 = s_4 s_2 s_4, \quad
s_3 s_4 s_3= s_4 s_3s_4.
\end{equation}
Set $Q =\Gamma_{\ab}$ and  $R = \Z{Q}$. 
The generators $s_k$ are mapped to the same element, 
say $s$, in $Q$;
so the differential $d_1$ is given by multiplication by the column vector 
$(1-s, 1-s, 1-s, 1-s)$.
If one denotes the standard basis vectors of $R^4$ by $b_1$,  \ldots, $b_4$,
the kernel of $d_1$ can be described thus:
\begin{equation}
\label{eq:ker-d1-type-D4}
\ker d_1 
= 
R \cdot (b_1- b_2)  \oplus R \cdot (b_1- b_3)  \oplus R \cdot (b_1- b_4). 
\end{equation}
So $\ker d_1$ is the free $R$-module with basis 
\[
u = b_1-b_2, \quad v = b_1 - b_3, \quad \text{and}\quad w = b_1 - b_4.
\]
The differential $d_2$ is given by a matrix $M(D_4)$
with 6 rows, 4 columns and entries in $R$.
By equation \eqref{eq:Relator-r2-rank-1},
the rows corresponding to the relations listed in
\eqref{eq:Commuting-relations-D4},
are
\begin{equation}
\label{eq:Rows-L1-L2-L3-of-M(D4)}
L_1 =  (1-s) \cdot u,\quad
L_2 =  (1-s)  \cdot v, \quad 
L_3 = (1-s) \cdot (v - u).
\end{equation}
By equation \eqref{eq:Row-for-r3}
the rows corresponding to the relations listed in
\eqref{eq:Other-relations-D4}
are
\begin{align}
L_4 &= (1-s + s^2) \cdot (b_1 - b_4) = (1-s+s^2) \cdot w,
\label{eq:Row-L4-of-M(D4)}\\
L_5 &= (1-s + s^2) \cdot (b_2 - b_4) = (1-s+s^2)\cdot (w -u),
\label{eq:Row-L5-of-M(D4)}\\ 
L_6 &= (1-s + s^2) \cdot (b_3 - b_4) = (1-s + s^2) \cdot (w- v).
\label{eq:Row-L6-of-M(D4)}
\end{align}
Since $L_3= L_2 -L_1$, the row $L_3$ is superfluous.
Next $L_4 - L_5 = (1-s+s^2) u$ and so $L_4 - L_5 + s \cdot L_1 = u$.
Similarly,  $L_4 - L_6 + s \cdot L_2 = v$.
It follows that the homology group $\ker d_1/ \im d_2$ is isomorphic to
\[
\left(R u \oplus R v \oplus R w\right) / 
\left(R u \oplus R v \oplus R (1 - s + s^2) w\right)
\]
and so $\Gamma'_{\ab}$ is a free abelian group of rank 2. 

Assume next that $(\Gamma, \Delta(S, E, \lambda))$ 
is an Artin system of type $D_\ell$ with $\ell > 4$.
The system contains then a subsystem of type $A_4$,
and it follows, as in section \ref{sssec:Artin-systems-of-type-A-return},
that $\Gamma'$ is perfect.
 
 Suppose, finally, 
 that $(\Gamma, \Delta(S, E, \lambda))$ is an Artin system of type $E_\ell$
 with $\ell$ in $\{6,7,8\}$.
The restriction on $\ell$ is imposed for the following reasons:
if one omits one of the terminal edges of a Coxeter graph of type $E_6$ 
one obtains a Coxeter graph of type $A_5$ or of type $D_5$.
If, on the other hand, 
one adds an edge to the longest chain in a Coxeter graph of type $E_8$
one obtains a Coxeter graph 
whose corresponding Coxeter group is no longer finite
(see, \eg \cite[Section 2.5]{Hum90}).

Suppose now
that  $\Delta$ is a Coxeter graph of one of the types 
$E_6$, $E_7$ or $E_8$.
Then its Artin system contains a subsystem of type $A_4$ 
and so it follows as before 
that  the derived group of $\Gamma$ is perfect.

\begin{remark}
\label{remark:Generalised-groups-of-of-type-E}
The Coxeter graph $\Delta$ of an Artin system of type $E$ 
has the form of a tripod.
If each leg of this tripod has length 1, 
the graph is that of an Artin system of type $D_4$.
If each leg has positive length 
and at least one of them has length greater than 1,
and if the generators in the tripod with distances 2 or 3 commute,
the Artin system contains a subsystem of type $A_4$
whence the derived group of the Artin group in question is perfect 
by Corollary \ref{crl:Artin-systems-with-perfect-derived-groups}.
Examples of Coxeter graphs with the stated properties can be found
on page 144 in \cite{Hum90}.
\end{remark}

Here is a summary of the results obtained in this section
\ref{ssec:Artin-systems-of-types-D-or-E}:
\begin{prp}
\label{prp:Derived-group-of-Artin-group-of-types-D-and-E}
Assume first 
that $(\Gamma, \Delta(S, E, \lambda))$ is an Artin system of type $D_\ell$ 
with $\ell \geq 4$
and consider the derived group  $\Gamma'$ of $\Gamma$.
If $\ell = 4$ then $\Gamma'_{\ab}$ is free abelian of rank 2
and if $\ell > 4$ then $\Gamma'$ is perfect.
If, secondly,
$(\Gamma, \Delta(S, E, \lambda))$ is an Artin system of type $E_6$, $E_7$ or $E_8$
 its derived group is perfect.
\end{prp}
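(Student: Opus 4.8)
The plan is to mirror the two-pronged treatment already used for type $A$: a direct homology computation for the smallest diagram, $D_4$, which is genuinely exceptional, and a uniform reduction to an $A_4$-subsystem for every larger diagram of type $D$ or $E$.

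First I would settle $D_4$ by computing $H_1(\Gamma,\Z{\Gamma_{\ab}}) = \ker d_1/\im d_2$ explicitly. All four standard generators map to a single element $s\in Q=\Gamma_{\ab}$, so $\ker d_1$ is free on $u=b_1-b_2$, $v=b_1-b_3$, $w=b_1-b_4$. The six rows of the matrix of $d_2$ are, by \eqref{eq:Relator-r2-rank-1}, the vectors $(1-s)u$, $(1-s)v$, $(1-s)(v-u)$ coming from the three commuting relations \eqref{eq:Commuting-relations-D4}, and, by \eqref{eq:Row-for-r3}, the vectors $(1-s+s^2)w$, $(1-s+s^2)(w-u)$, $(1-s+s^2)(w-v)$ coming from the three braid relations \eqref{eq:Other-relations-D4}. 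Suitable $\Z Q$-linear combinations of these rows produce $u$ and $v$, so that $\im d_2 = Ru\oplus Rv\oplus R(1-s+s^2)w$ and $H_1(\Gamma,\Z{\Gamma_{\ab}})\cong R/R(1-s+s^2)$, a ring whose additive group is free abelian of rank $2$; this is the assertion for $D_4$.

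Next I would dispose of $D_\ell$ with $\ell>4$ and of $E_6$, $E_7$, $E_8$ in one stroke by locating an $A_4$-subsystem in each --- four consecutive vertices of the long chain for $D_\ell$, the first four vertices of the longest arm for $E_6$, $E_7$, $E_8$. Since the Coxeter graphs of types $D$ and $E$ are trees all of whose edges carry the label $3$, such a graph is its own spanning tree $T$, every edge of $T$ has odd label, every pair of vertices of $T$ at distance $2$ is non-adjacent and hence commutes in $\Gamma$, and the three edges joining the four chosen generators form a segment inside $T$. Thus hypotheses (i), (ii) and (iii) of Corollary \ref{crl:Artin-systems-with-perfect-derived-groups} are met, and the corollary gives that $\Gamma'$ is perfect. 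Combining this with the $D_4$ case yields the proposition.

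The main obstacle is merely bookkeeping in the reduction step: for each of $D_\ell$ with $\ell>4$, $E_6$, $E_7$ and $E_8$ one must check that the four chosen generators really satisfy all six standard $A_4$-relations --- in particular that the two non-adjacent pairs among them commute --- and that the spanning-tree and odd-label hypotheses of Corollary \ref{crl:Artin-systems-with-perfect-derived-groups} hold. Once the $A_4$-subsystem is correctly identified these verifications are immediate from the shape of the Coxeter graphs, and the short $D_4$ linear-algebra computation remains the only genuine calculation.
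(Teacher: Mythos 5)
Your proposal is correct and follows essentially the same route as the paper: an explicit computation of $\ker d_1/\im d_2$ for $D_4$ (with the same basis $u$, $v$, $w$ and the same row reductions yielding $R/R(1-s+s^2)$), followed by locating an $A_4$-subsystem and invoking Corollary \ref{crl:Artin-systems-with-perfect-derived-groups} for $D_\ell$ with $\ell>4$ and for $E_6$, $E_7$, $E_8$. No gaps.
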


%
\subsection{Artin systems of type $H_3$ or $H_4$}
\label{ssec:Artin-systems-of-finite-type-H3-or-H4}
%
I continue with the two Artin systems of finite type 
whose Coxeter graphs contain an edge with label 5. 
%
\subsubsection{Artin system of type $H_3$}
\label{sssec:Artin-system-of-type-H3}
%
%
Let  $(\Gamma, \Delta(S, E, \lambda))$ be an Artin system of type $H_3$ 
with standard generating set $S = \{s_1, s_2, s_3 \}$.
The analysis of $\Gamma$ will begin with a first part 
that is similar to beginning of the analysis of an Artin group of type $A_3$,
given in section \ref{sssec:Artin-system-of-type-A3}.
The standard defining relators of $(\Gamma, S)$ are
\begin{equation}
\label{eq:Relators-H3}
r_2 = s_1 s_3 \cdot (s_3 s_1)^{-1},
\quad
r_3 = s_1 s_2 s_1 \cdot  (s_2 s_1s_2)^{-1},
\quad
r_5 = s_2 s_3 s_2 s_3 s_2\cdot  (s_3 s_2s_3s_2s_3)^{-1}.
\end{equation}
The abelianisation of $\Gamma$ is infinite cyclic, 
generated by the common image,
say $s$, of $s_1$, $s_2$ and $s_3$ in $Q = \Gamma_{\ab}$.
Set $R = \Z{Q}$ to simplify notation.

The kernel of $d_1$ is a free submodule of 
$R b_1 \oplus R  b_2 \oplus R  b_3$,
namely
\begin{equation}
\label{eq:Kernel-d1-type-H3}
\ker d_1 = R \cdot (b_1 - b_2) \oplus  R \cdot (b_2 - b_3)
\end{equation}
(see formula \eqref{eq:kernel-d1-rank-1}).
Set $u = b_1 - b_2$ and $v = b_2-b_3$.
In view of equations 
\eqref{eq:Relator-r2-rank-1}, \eqref{eq:Row-for-r3} and \eqref{eq:Row-for-r5},
the image of the differential $d_2$ is generated by the three vectors
\[
x = (1-s )(e_1 - e_3) = (1-s) (u + v),
\quad
y=  (1 - s + s^2) u \text{ and }
z= (1 - s + s^2 - s^3 + s^4) v.
\]
These three vectors generate all of $\ker d_1$.
Indeed:
\begin{align*}
L_1 &= s x + y = u + (s-s^2) v,\\
L_2 &=(1+s^2) L_1 + z - y
=
\left( ( 1 + s^2) u - y \right) + \left((s -s^2 + s^3 - s^4) v + z \right)\\
&= su + v,\\
L_3 &= (1 - s + s^2) L_2 - sy 
= \left((s - s^2 + s^3) u  - s  y \right) + (1-s + s^2) v, \\
&= (1- s + s^2) v,\\
L_4 &= z - s^2 L_3 = (1-s) v, \text{ whence}\\
L_5 &= z +(s + s^3) L_4 = v \quad \text{ and } \quad
L_6 = L_1 - (s - s^2) L_5 = u.
\end{align*}
%
\subsubsection{Artin system of type $H_4$}
\label{sssec:Artin-system-of-type-H4}
%
Let  $(\Gamma, \Delta(S, E, \lambda))$ be an Artin system of type $H_4$
with generating set $\{s_1, s_2, s_3, s_4\}$. 
Since the label of the edge $\{s_3, s_4\}$ is 5, 
the system contains a subsystem of type $H_3$ with. generating set 
$S_1 = \{s_2, s_3, s_4\}$
The preceding section  \ref{sssec:Artin-system-of-type-H3}
then shows that $\ker d_2$ contains the differences $b_2- b_3$ and $b_3 - b_4$
and so $\Gamma'$ is perfect by Theorem
\ref{thm:Artin-systems-with-perfect-derived-groups}.
\smallskip

If one reflects about the proof just given 
one sees that it can be generalized 
so as to establish a companion result of Corollary
\ref{crl:Artin-systems-with-perfect-derived-groups},
namely
\begin{crl}
\label{crl:Artin-systems-with-perfect-derived-groups-II}
Let  $(\Gamma, \Delta(S, E, \lambda))$ be an Artin system  
for which the following hypotheses are satisfied:
\begin{enumerate}[(i)]
\item the graph $(S,E)$ is connected and admits a spanning tree $T$ 
all whose edges are labelled by odd numbers;
\item if $s$, $s'$ are vertices of $T$ with distance 2,
then the commuting relation $s s' = s' s$ holds in $\Gamma$;
\item there exist 3 generators, 
say $s_1'$, $s'_2$ and $s'_3$, in $S$
which satisfy the 3 standard relations of an Artin system of type $H_3$
and have the property 
that $\{s'_1, s'_2\}$ and $\{s'_2, s'_3\}$ are adjacent edges of $T$.
 \end{enumerate}
Then the derived group of $\Gamma$ is perfect.
\end{crl}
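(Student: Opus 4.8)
The plan is to reduce the assertion to Theorem~\ref{thm:Artin-systems-with-perfect-derived-groups}, following step by step the proof of Corollary~\ref{crl:Artin-systems-with-perfect-derived-groups} with the subsystem of type $A_4$ replaced by the subsystem of type $H_3$ supplied by hypothesis~(iii). Hypotheses~(i) and~(ii) of the present corollary are literally hypotheses~(i) and~(ii) of that theorem, so all that has to be verified is its hypothesis~(iii): that some edge $e_0$ of the spanning tree $T$ has the property that the difference of the two basis vectors attached to its endpoints lies in $\im d_2$.

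First I would note that, by hypothesis~(i) together with Remark~\ref{remarks:Artin-system}, the abelianisation $Q = \Gamma_{\ab}$ is infinite cyclic and every standard generator of $\Gamma$ maps to one and the same generator $s$ of $Q$. Consequently the description \eqref{eq:kernel-d1-rank-1} of $\ker d_1$ and the formulae \eqref{eq:Relator-r2-rank-1}, \eqref{eq:Row-for-r3}, \eqref{eq:Row-for-r5} for the rows of the matrix of $d_2$ all apply verbatim. Now let $b_1'$, $b_2'$, $b_3'$ be the basis vectors, in dimension~$1$, corresponding to the generators $s_1'$, $s_2'$, $s_3'$ of the $H_3$-subsystem, and set $u = b_1' - b_2'$, $v = b_2' - b_3'$. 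The three defining relations of that subsystem are among the defining relations of $\Gamma$, so the three rows of the matrix of $d_2$ that they contribute are exactly the vectors
\[
x = (1-s)(u+v), \qquad y = (1 - s + s^2)\,u, \qquad z = (1 - s + s^2 - s^3 + s^4)\,v
\]
computed in section~\ref{sssec:Artin-system-of-type-H3}. Hence $\im d_2$ contains the $\Z{Q}$-span of $x$, $y$, $z$.

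The substance of the argument is then the chain of $\Z{Q}$-linear combinations $L_1, \dots, L_6$ already carried out in section~\ref{sssec:Artin-system-of-type-H3}: applied inside $\Gamma$ they show that $u = b_1' - b_2' \in \im d_2$ and $v = b_2' - b_3' \in \im d_2$. Since $\{s_1', s_2'\}$ is an edge of $T$ by hypothesis~(iii), we may take $e_0 = \{s_1', s_2'\}$; then hypothesis~(iii) of Theorem~\ref{thm:Artin-systems-with-perfect-derived-groups} is satisfied, and the theorem yields that $\Gamma'$ is perfect.

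The only real subtlety, exactly as in the proof of Corollary~\ref{crl:Artin-systems-with-perfect-derived-groups}, lies in reading hypothesis~(iii) correctly: one needs that the sub-Artin-system on $\{s_1', s_2', s_3'\}$ is genuinely of type $H_3$, so that its three relations are actual defining relations of the standard presentation of $\Gamma$ (by Remark~\ref{remark:Subgroups-of-Artin-groups}) and thus contribute the rows $x$, $y$, $z$, and that the labels $3$ and $5$ of the two tree-edges $\{s_1',s_2'\}$, $\{s_2',s_3'\}$ are odd, as required by hypothesis~(i). Once this has been granted, everything reduces to the homological bookkeeping of section~\ref{sssec:Artin-system-of-type-H3}; there is no genuinely hard step, the proof being a routine transcription of the $A_4$ case.
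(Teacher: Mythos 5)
Your proof is correct and follows essentially the same route as the paper: the paper's own argument (sketched via the type $H_4$ case) likewise feeds the $L_1,\dots,L_6$ computation of section~\ref{sssec:Artin-system-of-type-H3} into hypothesis~(iii) of Theorem~\ref{thm:Artin-systems-with-perfect-derived-groups}, with the tree edge $e_0$ taken inside the $H_3$-subsystem. (Note also that the paper's phrase ``$\ker d_2$ contains the differences'' in the $H_4$ discussion is a slip for $\im d_2$, which you state correctly.)
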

\begin{examples}
\label{examples:crl:Artin-systems-with-perfect-derived-groups-II}
Here are some examples of Artin systems
that arise in the study of Coxeter groups 
and which illustrate Corollary
\ref{crl:Artin-systems-with-perfect-derived-groups-II}.
\begin{enumerate}[(i)]
\item  In the analysis of positive semidefinite graphs,
as carried out in \cite[Section 2.5]{Hum90}, 
the author considers graphs called $Z_4$ and $Z_5$ 
and displayed on page 35.
The Artin systems associated to these graphs satisfy 
the assumptions of the corollary.
\item In \cite{Hum90} the author lists on pages 141-- 141  graphs of the hyperbolic Coxeter groups.
Many of the associated Artin systems satisfy the assumptions
of either Corollary \ref{crl:Artin-systems-with-perfect-derived-groups}
or of Corollary \ref{crl:Artin-systems-with-perfect-derived-groups-II}.
\end{enumerate}
\end{examples}

%
\subsection{Artin system of type $F_4$}
\label{ssec:Artin-system-of-type-F4}
%
There are three kinds of irreducible Artin systems of finite type 
whose abelianisations are free abelian of rank 2,
 the system of type $F_4$, 
 those of type $B$ and the systems of type $I_2(2n)$ with $n \geq 3$. 
 The systems of type $I_2(2n)$ have been investigated in section
 \ref{ssec:Two-generator-Artin-systems}.
 In this section that of type $F_4$ will be studied;
systems of type $B$ will be the topic of section 
 \ref{ssec:Artin-systems-of-type-B}. 
 
Let $(\Gamma, \Delta(S, E, \lambda))$ be an Artin system of type $F_4$ 
with $S = \{s_1, s_2, s_3, s_4\}$. 
The group has 6 defining relations,
the three commuting relations
\begin{equation}
\label{eq:Commuting-relations-F4}
s_1 s_4 = s_4 s_1, \quad
s_1 s_3 = s_3 s_1, \quad
s_2 s_4 = s_4 s_2.
\end{equation}
and the relations
\begin{equation}
\label{eq:Other-relations-F4}
s_1 s_2 s_1 = s_2 s_1 s_2, \quad
(s_2 s_3)^2 = (s_3 s_2)^2, \quad
s_3 s_4 s_3 = s_4 s_3 s_4.
\end{equation}
Note that 5 out of the 6  relations listed in equations
\eqref{eq:Commuting-relations-F4} and \eqref{eq:Other-relations-F4}
are identical with those of the Artin system of type $A_4$,
treated in section  \ref{sssec:Artin-system-of-type-A4}.
The fifth relations, however, differ significantly:
in the case of the Artin system of type $F_4$ 
it is responsible for the fact 
that $\Gamma_{\ab}$ is not infinite cyclic, 
but free abelian of rank 2.

To compute the abelianisation of $\Gamma'$,
we determine, as in the preceding sections, the homology group 
$A = H_1(\Gamma, \Z{\Gamma_{\ab}})$;
it can be computed 
with the beginning of the $\Z{\Gamma}$-free resolution of $\Z$ 
associated to the standard presentation of the Artin system $(\Gamma, S)$,
and it is the quotient of the kernel of 
$d_1$
modulo the image of 
\[
d_2 \colon  \bigoplus\nolimits_{1  \leq i \leq 6}\Z{\Gamma_{\ab}} 
\to \bigoplus\nolimits_{1  \leq j \leq 4}\Z{\Gamma_{\ab}}.
\]
Let $\ab \colon \Gamma  \epi \Gamma_{\ab}$ 
denote the canonical map onto the abelianisation of $\Gamma$. 
Under $\ab$ 
the generators $s_1$ and $s_2$ map to the same element, 
say $s$, 
and $s_3$, $s_4$ go to a second element, say $t$.
These two elements $s$ and $t$ form a basis of $\Gamma_{\ab}$.
To simplify notation, 
set $Q = \Gamma_{\ab}$ and $R = \Z{Q}$.
The differential $d_1$  is given by multiplication by the column vector 
$(1-s, 1-s, 1-t, 1-t)$ and the kernel is
\begin{equation}
\label{eq:kernel-d1-F4}
\ker d_1 = R  (b_1 - b_2) \oplus R \cdot \left((1-t) b_2 + (s-1) b_3 \right)
\oplus R (b_3-b_4).
\end{equation}
A short calculation then shows 
that $\ker d_1$ is the free $R$-module with basis
\begin{equation}
\label{eq:basis-ker-d1-type-F4} 
u = b_1 - b_2, \quad v = (1-t)b_2 +(s-1)b_3, \quad w = b_3 -b_4.
\end{equation}

The differential $d_2$ is given by a matrix $M(F_4)$ with entries in $R$;
it has 6 rows, one row for each defining relator, and 4 columns.
By equation \eqref{eq:Relator-r2-rank-2}
the rows corresponding to the three commuting relations listed in
\eqref{eq:Commuting-relations-F4}
are
\begin{align*}
\label{eq:Rows-L1-L2-L3-of-M(H4)}
L_1 &= (1-t) b_1 + (s-1) b_4 =(1-t) (b_1 -b_2) + v + (1-s) (b_3 - b_4) 
\\
&= (1-t)u + v + (1-s) w,\\
L_2 &=  (1-t) b_1 + (s-1) b_3  = (1-t) u + v,\\
L_3 &=  (1-t) b_2 + (s-1) b_4  =  v + (1-s) w.
\end{align*}
Set $L_0 = L_2 + L_3 - L_1$.
Then
\begin{equation}
\label{eq:New-basis}
L_0= v,
\quad
L_2 - L_0 = (1-t) u
\quad\text{and} \quad
L_3 -L_0 = (s-1) w.
\end{equation}
Note that the three elements displayed in equation \eqref{eq:New-basis}
generate the same $R$-submodule 
as do $L_1$, $L_2$ and $L_3$.

Now to the three relations listed in equation 
\eqref{eq:Other-relations-F4}; 
they give rise to the last three rows of the matrix $M(F_4)$.
The first and the third relations
 coincide with those for the Artin group of type $A_4$; 
they are
\begin{equation}
\label{eq:Rows-L4-and-L6-type-F4}
L_4 = (1-s + s^2) u
\quad\text{and}\quad
L_6 = (1-t+t^2) w.
\end{equation}
The second relation listed in \eqref{eq:Other-relations-F4}
yields the row
\[
(1+st)\left((1-t) b_2 +(s-1) b_3\right) = (1+st) v
\]
(see equation \eqref{eq:Row-for-r4});
it is redundant in view of the first entry in equation \eqref{eq:New-basis}.
It follows that the homology group $\ker d_1/\im d_2$ 
is isomorphic to the direct sum 
\[
R / (R (1 - s + s^2 ) + R ( 1-t))  
\;\oplus\;
R/R
\; \oplus \;
R / (R (1 - s ) + R ( 1-t + t^2)).
\]
A short calculation then discloses 
that the abelian groups underlying each of the non-trivial summands in the above sum are free abelian of rank 2.
This proves:
\begin{lem}
\label{lem:Derived-group-of-Artin-group-of-type-F4}
If $(\Gamma, \Delta(S, E, \lambda))$ is an Artin system of type $F_4$
then $\Gamma'_{\ab}$ is free abelian of rank 4
and so $\Gamma'$ is not perfect.
\end{lem}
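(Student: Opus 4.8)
The plan is to complete the homology computation set up just above the statement. By the preceding discussion --- in particular the identification of the last row of $M(F_4)$ as redundant --- the module $A = H_1(\Gamma, \Z{\Gamma_{\ab}})$, which is isomorphic to $\Gamma'_{\ab}$ by Shapiro's lemma, is the direct sum
\[
R/(R(1-s+s^2) + R(1-t)) \;\oplus\; R/R \;\oplus\; R/(R(1-s) + R(1-t+t^2)),
\]
where $R = \Z{Q}$ and $Q = \Gamma_{\ab}$ is free abelian with basis $s$, $t$. The middle summand is $0$, so it is enough to show that each of the two outer summands is a free abelian group of rank $2$; this will give that $\Gamma'_{\ab}$ is free abelian of rank $4$, and since it is non-trivial, $\Gamma'$ is not perfect.

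For the first summand I would argue as follows. The ring $R$ maps onto the group ring $\Z{C}$ of the infinite cyclic group $C = \langle s \rangle$ by sending $t$ to $1$, and the kernel of this map is exactly $R(1-t)$; hence
\[
R/(R(1-s+s^2) + R(1-t)) \;\cong\; \Z{C}/(\Z{C} \cdot (1-s+s^2)).
\]
Now $f(s) = 1 - s + s^2$ is a Laurent polynomial whose leading and trailing coefficients are both units of $\Z$. Applying the division algorithm from above and from below --- exactly as in the proof of Proposition \ref{prp:Characterization-of-fp-metabelian-groups} --- every coset modulo $\Z{C} \cdot f(s)$ has a unique representative of the form $a_0 + a_1 s$ with $a_0, a_1 \in \Z$. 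Thus the quotient is free abelian with basis the images of $1$ and $s$, so of rank $2$. The third summand is handled in the same way, with $s$ and $t$ interchanged and $f$ replaced by $g(t) = 1 - t + t^2$; it is likewise free abelian of rank $2$.

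Putting the pieces together, $\Gamma'_{\ab} \cong H_1(\Gamma, \Z{\Gamma_{\ab}})$ is free abelian of rank $2 + 0 + 2 = 4$. In particular $\Gamma'_{\ab} \ne 0$, so $\Gamma'$ cannot be perfect, which is the assertion of the lemma. The only point requiring a little care is the identification $R/(R(1-s+s^2)+R(1-t)) \cong \Z{C}/(\Z{C}(1-s+s^2))$ --- i.e.\ checking that the two defining relations of the two-variable quotient do not interact and that $1-t$ generates the whole kernel of the specialization $t \mapsto 1$; once this is granted, the rank count is the same routine Laurent-polynomial bookkeeping already used earlier in the paper.
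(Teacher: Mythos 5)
Your proof is correct and follows essentially the paper's own route: it takes the direct-sum decomposition of $\ker d_1/\im d_2$ displayed just before the lemma and carries out the ``short calculation'' the paper leaves implicit, identifying each non-trivial summand with $\Z{C}/(\Z{C}\cdot(1-s+s^2))$ (respectively with $t$ in place of $s$) via the specialization killing $1-t$ (respectively $1-s$) and counting its rank as 2. The only cosmetic point is that the rank-2 count is most directly covered by Lemma \ref{lem:T-as-bar-S-module} (with $S=\Z$ and $m=2$) rather than by the HNN-extension argument of Proposition \ref{prp:Characterization-of-fp-metabelian-groups}, but the division-from-above-and-below reasoning you invoke is the same.
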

\begin{remark}
According to  \cite[Section 3.3.6]{MuRo06} 
the derived group $\Gamma'$ of $\Gamma$ is finitely generated.
 \end{remark}
%
\subsection{Artin systems of type $B$}
\label{ssec:Artin-systems-of-type-B}
%
The study of Artin systems 
of type $B$ 
splits into several cases.
 The systems of types $B_2$, $B_3$ and $B_4$ will be handled separately,
 those of the remaining types are investigated by an inductive argument
 based on the result for the Artin system of type $B_5$.
 %
\subsubsection{Artin system of type $B_2$}
\label{sssec:Artin-system-of-type-B2}
%
Let $(\Gamma, \Delta(S, E, \lambda))$ be an Artin system of type $B_2$;
this system is identical with that of type $I_2(4)$.
Thus Proposition \ref{prp:Two-generator-Artin-groups-with-even-label} applies
and shows that $\Gamma'_{\ab}$ is free abelian of infinite rank.
%
\subsubsection{Artin system of type $B_3$}
\label{sssec:Artin-system-of-type-B3}
%
Let $(\Gamma, \Delta(S, E, \lambda))$ be an Artin system of type $B_3$.
The standard generators of $\Gamma$ are $s_1$, $s_2$, $s_3$ 
and the defining relations are these:
\begin{equation}
\label{eq:Defining-relations-of-B3}
s_1 s_3 = s_3 s_1, \quad s_1 s_2 s_1 = s_2 s_1 s_3 
\quad \text{and}\quad
(s_2 s_3 )^2 = (s_3 s_2)^2.
\end{equation}
In view of  section \ref{sssec:d2-and-im-d2}
these relations are equivalent to the relators
\begin{equation}
\label{eq:Relators-B3}
r_2 = s_1 s_3 \cdot s_1^{-1} s_3^{-1},
\quad
r_3 = s_1 s_2 s_1 \cdot  s_2^{-1} s_1^{-1}s_2^{-1},
\quad
r_4 = (s_2 s_3)^2\cdot  (s_2^{-1} s_3^{-1})^2.
\end{equation}
Set $Q = \Gamma_{\ab}$ and $R = \Z{Q}$.
The images of the generators $s_1$, $s_2$ coincide in $Q$,
call their common image $s$,
and $s_3$ maps to an element $t \in Q$,
all in such a way that $\{s, t\}$ is a basis of the free abelian group $Q$.
The kernel of the homomorphism $d_1$ is the submodule 
\begin{equation}
\label{eq:kernel-d1-type-B3}
\ker d_1 = R \cdot (b_1 - b_2) \oplus  R \cdot ((1-t)b_2 + (s-1)b_3)
\end{equation}
of the $R$-free module $R b_1 \oplus R  b_2 \oplus R  b_3$
(see formula \eqref{eq:kernel-d1-type-B}).

Set $u = b_1 - b_2$ and $v = (1-t)b_2 + (s-1)b_3$.
In view of equations
\eqref{eq:Relator-r2-rank-2}, 
\eqref{eq:Row-for-r3} and \eqref{eq:Row-for-r4},
the image of the differential $d_2$ is then generated by the three vectors
\[
L_1 = (1-t)b_1 + (s-1)b_3 = (1-t)u +  v.
\quad 
L_2 =(1 - s + s^2)\cdot u,
\quad
L_3= (1+st) \cdot v.
\]
The kernel of $d_1$ is the free $R$-module with basis $\{u, v\}$,
hence also the free $R$-module with basis $\{u, L_1 = (1-t)u + v \}$,
and  $L_3 = (1+ st) v =  (1+ st) L_1 - (1 + st) (1-t))u$.
The homology module $\ker d_1 / \im d_2$ is therefore  isomorphic 
to the module 
\begin{equation}
\label{eq:Homology-for-B3}
R/(I + J)
\quad \text{where} \quad I = R \cdot (1 - s + s^2) \text{ and } 
J = R \cdot(1+st) (1-t).
\end{equation}

We are left with the determination of the abelian group 
underlying the ring $R/(I+J)$.
It will be carried out in two steps. 
To begin with,
set
\[
\bar{S} = \Z[s, s^{-1}]/  \left(\Z[s, s^{-1}] \cdot (1 - s + s^2) \right).
\]
Since the constant term of the polynomial $ f(s) = 1 - s + s^2$ is a unit in $\Z$,
this ring is isomorphic to $\Z[s]/(\Z[s] \cdot f(s))$
 and hence a ring of integers contained in the algebraic number field 
 $\Q[s]/(1 - s + s^2)$.
 The additive group of $\Z[s]/(\Z[s] \cdot f(s))$ is free abelian of rank 2.
In the second step; 
the ring $R/(I + J)$ will be reexpressed with the help of $\bar{S}$.
Let $\bar{s}$ denote the canonical image of $s$ in $\bar{S}$ and set
\[
p(t) =(1 + \bar{s}t) (t-1) = -1 + (1 - \bar{s})t + \bar{s}t^2 \in \bar{S}[t, t^{-1}].
\]
Then $p(t)$ is a quadratic polynomial with coefficients in $\bar{S}$ 
whose constant and leading terms are units in $\bar{S}$.
In addition, the ring $R/(I + J)$ is isomorphic to the ring
\[
T = \bar{S}[t, t^{-1}] / \bar{S}[t, t^{-1}] \cdot p(t).
\]
The following elementary result allows one therefore to conclude 
that the ring $T$, viewed a $\bar{S}$-module, is a free module of rank 2. 

\begin{lem}
\label{lem:T-as-bar-S-module}
Let $S$ be commutative ring (with unit $ 1 \neq 0$) 
and let $p(X)$ be a Laurent-polynomial in the ring $S[X, X^{-1}]$.
Suppose $p(X) = p_0 + p_1 X +\cdots + p_m X^m$
where $p_0$ as well as $p_m$ are units in $S$,
and set
\[
T =S[X, X^{-1}] / \left(S[X, X^{-1}] \cdot p(X)\right).
\]
Then $T$, viewed as an $S$-module, is a free module of rank $m$.
\end{lem}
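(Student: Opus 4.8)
The plan is to exhibit an explicit basis: I claim the residue classes of the monomials $1, X, \ldots, X^{m-1}$ form a basis of $T$ over $S$. Everything then splits into two verifications — that these classes span $T$ and that they are $S$-linearly independent — each of which uses one of the two unit hypotheses on $p(X)$. (If $m=0$ then $p(X)$ is a unit, $T=0$, and there is nothing to prove, so one may assume $m\geq 1$.)

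For the spanning part I would let $M\subseteq T$ be the $S$-submodule generated by the classes of $1, X, \ldots, X^{m-1}$ and check that $M$ is stable under multiplication by $X$ and by $X^{-1}$. Stability under $X$ is obvious except on $X^{m-1}$, where the relation $p(X)=0$ in $T$ rearranges to $X^m \equiv -p_m^{-1}\bigl(p_0 + p_1 X + \cdots + p_{m-1}X^{m-1}\bigr)$, which lies in $M$ precisely because $p_m$ is a unit. Stability under $X^{-1}$ is obvious except on $1$, where the same relation, after dividing by $X$ and by $p_0$, gives $X^{-1} \equiv -p_0^{-1}\bigl(p_1 + p_2 X + \cdots + p_m X^{m-1}\bigr)$, again an element of $M$ because $p_0$ is a unit. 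Since $S[X,X^{-1}]$ is generated as an $S$-algebra by $X$ and $X^{-1}$, it follows that $M$ is an $S[X,X^{-1}]$-submodule of $T$; as it contains the class of $1$ and $T$ is generated by that class as an $S[X,X^{-1}]$-module, we conclude $M=T$.

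For linear independence, suppose $c_0 + c_1 X + \cdots + c_{m-1}X^{m-1}$ lies in the ideal $S[X,X^{-1}]\cdot p(X)$, say it equals $q(X)p(X)$ for some Laurent polynomial $q(X)=\sum_j q_j X^j$. If $q\neq 0$, let $a$ and $b$ be the smallest and largest indices with $q_a\neq 0$ and $q_b\neq 0$. In a Laurent polynomial ring there is no cancellation at the extreme degrees, so the coefficient of $X^{a}$ in $q(X)p(X)$ is exactly $q_a p_0$ and the coefficient of $X^{b+m}$ is exactly $q_b p_m$; both are nonzero since $p_0$ and $p_m$ are units. Comparing with $\sum_{i=0}^{m-1}c_i X^i$, whose monomials all have exponent in $\{0,\ldots,m-1\}$, forces $a\geq 0$ and $b+m\leq m-1$, i.e. $a\geq 0 > b$, contradicting $a\leq b$. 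Hence $q=0$, whence $c_0=\cdots=c_{m-1}=0$.

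The argument is entirely elementary and I do not anticipate a genuine obstacle; the only point needing a little care is the degree bookkeeping in the last paragraph, where one must observe that the top and bottom coefficients of a product of Laurent polynomials are honest products of the corresponding coefficients of the factors — and it is exactly there that the invertibility of $p_0$ and of $p_m$ is used.
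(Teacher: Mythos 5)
Your proposal is correct and follows essentially the same route as the paper: both exhibit the classes of $1, X, \ldots, X^{m-1}$ as an explicit $S$-basis, prove spanning by reducing via $p(X)$ at the two ends (invertibility of $p_m$ to eliminate $X^m$, of $p_0$ to eliminate $X^{-1}$), and prove independence by the same degree bookkeeping on the lowest and highest coefficients of a product $q(X)p(X)$, where the unit hypotheses guarantee these extreme coefficients are nonzero. The only difference is cosmetic: you phrase spanning as stability of the submodule under multiplication by $X^{\pm 1}$ together with cyclicity of $T$, while the paper phrases it as a two-step division-with-remainder reduction of an arbitrary Laurent polynomial.
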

\begin{proof}
Let $J$ denote the principal ideal of $S[X, X^{-1}]$ generated by $p(X)$.
Since the trailing coefficient $p_0$ of $p(X)$ is a unit of $S$, 
every element $f \in S[X, X^{-1}]$ is congruent \emph{modulo} $J$ to a polynomial $g \in S[X]$. 
Since the leading coefficient of $p(X)$ is a unit in $S$, 
the polynomial $g$ is congruent \emph{modulo} $J$ 
to a polynomial of degree at most $m-1$.
As an $S$-module, 
the ring $T$ 
is thus generated by the canonical images of the monomials 1, $X$, \ldots, $X^{m-1}$. 
We are left with proving that these monomials are $S$-linearly independent.
This amounts to show 
that the zero-polynomial  is the only polynomial of degree at most $m$
which lies in the ideal $S[X, X^{-1}] \cdot p(X)$.
Let $U$ be the $S$-submodule of $S[X,X^{-1}]$ generated 
by the elements $1$, $X$, \ldots, $X^{m-1}$.

Suppose next that $f$ is an element of $S[X,X^{-1}]$ 
so that the product $f \cdot p(X)$ lies in $S[X]$.
Since the trailing coefficient of $p(X)$ is a unit, 
hence not a zero divisor, 
the element $f$ must be a polynomial.
If $f$ is the zero polynomial, all is well; 
otherwise, 
the product $f \cdot p(X)$ 
is a polynomial of degree at least $m$ and thus outside of $U$.
\end{proof}
 %
%
\subsubsection{Artin system of type $B_4$}
\label{sssec:Artin-system-of-type-B4}
%
Let $(\Gamma, \Delta(S, E, \lambda))$ be an Artin system of type $B_4$.
The standard generators of $\Gamma$ are $s_1$, $s_2$, $s_3$, $s_4$, 
and the standard defining relations are, firstly, 
the 3 commuting relations
\begin{equation}
\label{eq:Commuting-relations-of-B4}
s_1 s_4 = s_4 s_1, \quad 
s_1 s_3 = s_3 s_1 
\quad \text{and}\quad
s_2 s_4 = s_4 s_2
\end{equation}
and then the 3 additional relations
\begin{equation}
\label{eq:Additional-relations-of-B4}
s_1 s_2 s_1 = s_2 s_1 s_2,
\quad
s_2 s_3 s_2 = s_3 s_2 s_3,
\quad \text{and}\quad
(s_3 s_4 )^2 = (s_4 s_3)^2.
\end{equation}
These 6 relations are equivalent to the 6 relators
\begin{gather}
\label{eq:Relators-B4}
s_1 s_4 \cdot s_1^{-1} s_4^{-1},
\quad
s_1 s_3 \cdot s_1^{-1} s_3^{-1},
\quad
s_2 s_4 \cdot s_2^{-1} s_4^{-1}, \\
r_3 =s_1 s_2 s_1 s_2^{-1} s_1^{-1}s_2^{-1},
\quad
r_3' =s_2 s_3 s_2   s_3^{-1} s_2^{-1}s_3^{-1},
\quad
r_4 = (s_3 s_4)^2 (s_3^{-1} s_4^{-1})^2.
\end{gather}

Set $Q = \Gamma_{\ab}$ and $R = \Z{Q}$.
The generators $s_1$, $s_2$ and $s_3$ have the same image in $Q$,
call it $s$,
and $s_4$ maps to an element $t \in Q$,
all in such a way that $s$ and $t$ are free generators of $Q$.
The kernel of the homomorphism $d_1$ is the submodule 
\begin{equation}
\label{eq:kernel-d1-type-B4}
\ker d_1 = R \cdot (b_1 - b_2) 
\oplus  
R \cdot (b_2 - b_3) \oplus R \cdot ((1-t)b_3 + (s-1)b_4)
\end{equation}
of the free $R$-module $R b_1 \oplus R  b_2 \oplus R  b_3 \oplus R  b_4$
(see formula \eqref{eq:kernel-d1-type-B}).
Set $u = b_1 - b_2$, set $v = b_2 - b_3$  and
$w= (1-t)b_3 + (s-1)b_4$;
these elements form a basis of $\ker d_1$.

Now to the image of $d_2$; 
it is generated by 6 elements, 
one element for each defining relator.
In view of equations
\eqref{eq:Relator-r2-rank-2}, 
\eqref{eq:Row-for-r3} and \eqref{eq:Row-for-r4},
these 6 elements are
\begin{align*}
L_1 &= (1-t)b_1  + (s-1)b_4 = (1-t)u + (1-t)v + w,\\
L_2 &= (1-s)(b_1 - b_3) = (1-s) (u + v), \\
L_3 &= (1-t)b_2  + (s-1)b_4 = (1-t)v + w,\\
L_4 &= (1 - s + s^2) \cdot u, \\
L_5 &= (1 - s + s^2) \cdot v, \\
L_6&= (1 + st) \cdot  w.
\end{align*}
It follows, first of all, that image of $d_2$ contains the vectors
\[
L_7 = s \cdot L_2 + L_4 + L_5 = u + v 
\quad \text{and} \quad
L_8 = L_1 - L_3 = (1-t) u.
\] 
The difference $(1-t) (u+v) - L_8$ equals $L_9 = (1-t)v$, 
and $L_3 - L_9 = w$.
So $\im d_2$ contains the vectors $u+v$ and $w$.
The image of $d_2$ is therefore also generated by the vectors
\[
w, \quad u + v, \quad (1-s + s^2) \cdot  u \quad \text{and} \quad (1-t) \cdot u.
\]
Since $\ker d_1$ is the free $R$-module with basis  $u$, $v$ and $w$,
the $R$-module $\ker d_1/ \im d_2$ is therefore isomorphic with
\begin{equation}
\label{eq:Description-of-homology-module-for-B4}
R /\left( R (1 -s + s^2) + R(1-t) \right).
\end{equation}
Now $R$ is the group ring of the free abelian group with basis $\{s, t\}$.
The preceding description of $\ker d_1/ im d_2$ allows one therefore to conclude 
that the abelian group underlying $\ker d_1/ \im d_2$  is free abelian of rank 2.
%
\subsubsection{Artin systems of type $B_\ell$ with $\ell \geq 5$}
\label{sssec:Artin-systems-of-type-Bell}
Let $(\Gamma, \Delta(S, E, \lambda))$ be an Artin system of type $B_\ell$ 
with standard generating set  $S = \{s_1, \ldots,  s_{\ell-1} , s_\ell \}$.
Set $Q = \Gamma_{\ab}$ and $R = \Z{Q}$, 
and assume that $\ell \geq 5$.
The generators $s_1$, \ldots, $s_{\ell-1}$ 
map onto one and the same element, say $s$, in  $Q$; 
the generator $s_\ell$ maps onto an element, say $t$, in $Q$,
and $s$ and $t$ are free generators of the free abelian group $Q$.
The differential $d_1$ is thus given by multiplication by the column vector 
$\left(1-s, \ldots, 1-s, 1-t \right)$
and
\begin{equation}
\label{eq:kernel-d1-type-B-bis}
\ker d_1 
= 
R \cdot (b_1 - b_2)  \oplus \cdots 
\oplus 
R\cdot (b_{\ell-2}-b_{\ell-1}) 
\oplus 
R\cdot ((1-t)b_{\ell-1} + (s-1)b_\ell)
\end{equation}
by formula\eqref{eq:kernel-d1-type-B}).
The differential $d_2$ is given by a matrix with $\tfrac{1}{2}(\ell-1)\ell$ rows
and $\ell$ columns.
The subset $S_1 = \{s_1, \ldots, s_{\ell-1}\}$ of $S$
generates a subgroup $\Gamma_1$ of $\Gamma$
and $(\Gamma_1, \Delta_1(S_1, E_1, \lambda_1))$ 
is an Artin system of type $A_{\ell- 1}$.
Because $\ell - 1 \geq 4$,
Proposition \ref{prp:Derived-group-of-Artin-group-of-type-A}
applies and shows
that $\im d_2$ contains the vectors $b_{k} - b_{k+1}$ 
for $k \in \{1, 2,  \ldots, \ell- 2\}$;
so $\im d_2$  contains, in particular, the vector $b_{\ell-2} - b_{\ell-1}$.
Moreover, since the relation 
$s_{\ell-2} s_{\ell} =s_{\ell} s_{\ell-2}$ is a standard relation of $\Gamma$,
formula \eqref{eq:Relator-r2-rank-2} guarantees
that the vector $(1-t) b_{\ell-2} + (s-1) b_\ell$ lies in the image of $d_2$.
But if so, the vector 
\[
(1-t) b_{\ell-2} + (s-1) b_\ell + (1-t) (b_{\ell-1} - b_{\ell-2})
=
(1-t) b_{\ell-1} + (s-1) b_\ell 
\]
is an element of $\im d_2$.
As the differences $b_2-b_1$, \ldots, $b_{\ell-1}-b_{\ell-2}$ 
are already known to belong to $\im d_2$
it follows from equation \eqref{eq:kernel-d1-type-B-bis}
that $\im d_2 = \ker d_2$ or, in other words, that $\Gamma'$ is perfect.
\smallskip

The results obtained in this section \ref{ssec:Artin-systems-of-type-B} 
can be summarized as follows:
\begin{prp}
\label{prp:Derived-groups-of-Artin-groups-of-type-B}
Let $(\Gamma, \Delta(S, E, \lambda))$ be an Artin system of type $B_\ell$ 
with $\ell \geq 2$
and set $A = \Gamma'_{\ab}$. 
If $\ell = 2$, the group $A$ is free abelian of infinite rank, 
if $\ell$ is 3  or 4 it is free abelian of rank 4 or 2, respectively, 
and if $ \ell \geq 5$ it is trivial.
\end{prp}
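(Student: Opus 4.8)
The plan is to treat the four ranges of $\ell$ separately, since each of them has in effect already been settled in the preceding subsections, and the proposition is chiefly a matter of collecting the pieces together. Throughout I keep the notation $Q = \Gamma_{\ab}$, $R = \Z{Q}$, and recall that $\Gamma'_{\ab}$ is isomorphic, by Shapiro's Lemma, to $H_1(\Gamma,\Z{\Gamma_{\ab}})$, which is the homology of the complex \eqref{eq:Complex-for-homology}.

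First, for $\ell = 2$ the Artin system of type $B_2$ coincides with that of type $I_2(4)$, so its single defining relator is of the shape \eqref{eq:Artin-relator-with-even-label} with $m = 2$; Proposition \ref{prp:Two-generator-Artin-groups-with-even-label} then applies verbatim and gives that $\Gamma'_{\ab}$ is free abelian of countably infinite rank. For $\ell = 3$ I would run the computation of section \ref{sssec:Artin-system-of-type-B3}: write $\ker d_1$ as the free $R$-module on $u = b_1 - b_2$ and $v = (1-t)b_2 + (s-1)b_3$, record the three rows $L_1, L_2, L_3$ coming from the relators \eqref{eq:Relators-B3}, and observe that $\ker d_1/\im d_2 \cong R/(I+J)$ with $I = R(1-s+s^2)$ and $J = R(1+st)(1-t)$. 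The key is then the identification of the underlying abelian group in two steps: reduce modulo $I$ first to obtain the quadratic number ring $\bar S = \Z[s,s^{-1}]/(1-s+s^2)$, whose additive group is free abelian of rank $2$; then view $R/(I+J)$ as $\bar S[t,t^{-1}]/(p(t))$ with $p(t) = (1+\bar s t)(t-1)$, a quadratic Laurent polynomial whose constant and leading coefficients are units of $\bar S$, so that Lemma \ref{lem:T-as-bar-S-module} makes it a free $\bar S$-module of rank $2$, hence free abelian of rank $4$. For $\ell = 4$ the analogous computation of section \ref{sssec:Artin-system-of-type-B4} reduces $\ker d_1/\im d_2$ to $R/\bigl(R(1-s+s^2) + R(1-t)\bigr)$, where $R$ is now the group ring of $\Z^2$; killing $1-t$ leaves $\Z[s,s^{-1}]/(1-s+s^2)$, free abelian of rank $2$.

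Finally, for $\ell \ge 5$ I would use the inductive argument of section \ref{sssec:Artin-systems-of-type-Bell}: the subset $S_1 = \{s_1,\dots,s_{\ell-1}\}$ generates, by the embedding result quoted in Remark \ref{remark:Subgroups-of-Artin-groups}, an Artin subgroup of type $A_{\ell-1}$ with $\ell - 1 \ge 4$, so Proposition \ref{prp:Derived-group-of-Artin-group-of-type-A} gives that $\im d_2$ contains all the differences $b_k - b_{k+1}$ for $1 \le k \le \ell-2$; combining the vector $b_{\ell-2} - b_{\ell-1}$ with the row $(1-t)b_{\ell-2} + (s-1)b_\ell$ furnished by the commuting relation $s_{\ell-2}s_\ell = s_\ell s_{\ell-2}$ produces $(1-t)b_{\ell-1} + (s-1)b_\ell \in \im d_2$, so $\im d_2$ exhausts the module \eqref{eq:kernel-d1-type-B-bis} and $H_1(\Gamma,\Z{\Gamma_{\ab}}) = 0$, i.e.\ $\Gamma'$ is perfect and $\Gamma'_{\ab}$ is trivial.

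The main obstacle is the $\ell = 3$ case: the other three ranges are either an outright citation or a short manipulation inside the group ring, but the rank-$4$ assertion genuinely needs the two-step reduction through $\bar S$ together with the auxiliary Lemma \ref{lem:T-as-bar-S-module} asserting that $S[X,X^{-1}]/(p(X))$ is $S$-free of rank $\deg p$ when $p$ has unit leading and constant coefficients; once that lemma is available the computation is routine. A secondary point requiring care, in the $\ell \ge 5$ induction, is keeping track of which endpoint of each relevant edge carries which basis vector, so that the telescoping of the $b_k - b_{k+1}$ with the rank-$2$ generator $(1-t)b_{\ell-1} + (s-1)b_\ell$ really does fill up $\ker d_1$ as claimed.
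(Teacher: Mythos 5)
Your proposal is correct and follows essentially the same route as the paper, which likewise proves this proposition by assembling the case computations of sections \ref{sssec:Artin-system-of-type-B2}--\ref{sssec:Artin-systems-of-type-Bell}: the citation of Proposition \ref{prp:Two-generator-Artin-groups-with-even-label} for $\ell=2$, the reduction to $R/(I+J)$ and the two-step rank count via $\bar{S}$ and Lemma \ref{lem:T-as-bar-S-module} for $\ell=3$, the reduction to $R/\left(R(1-s+s^2)+R(1-t)\right)$ for $\ell=4$, and the $A_{\ell-1}$-subsystem argument for $\ell\geq 5$. No gaps.
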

%
\subsection{Artin systems $(\Gamma, S)$ of type $I_2(2n)$ with $n \geq 3$}
\label{ssec:Artin-systems-of-type-I2(even)}
If $\Gamma, S)$ is an Artin system with standard generators $\{s_1, s_2\}$
and an edge label that is a positive and even integer,
$\Gamma'_{ab}$ is free abelian of infinite rank 
by Proposition \ref{prp:Two-generator-Artin-groups-with-even-label}
and so $\Gamma'$ is not perfect.
%
%
\subsection{Concluding remarks}
\label{ssec:Concluding-remarks}
The main focus of the present Section \ref{sec:Artin-systems-of-finite-type}
 has been on the question
\emph{whether or not the derived group $\Gamma'$
of an irreducible Artin group $\Gamma$ of finite type is perfect}.
The answer has been found for each of these groups,
and, in many cases, 
more precise information about $\Gamma'$ could also be obtained.

Here is a list of the main insights about irreducible Artin systems of finite type:
\begin{enumerate}[(i)]
\item
The abelianisation of $\Gamma$ is either infinite cyclic
or free abelian of rank 2.
\item If $\Gamma_{\ab}$ is \emph{infinite cyclic}
its graph contains a spanning tree all whose edges have label 3 or 5,
and so $\Gamma'$ is finitely generated 
by Proposition \ref{prp:Artin-systems-with-infinite-cyclic-abelianisation}.
\item If $\Gamma_{\ab}$ is \emph{infinite cyclic}
the abelian group $A =\Gamma'_{\ab}$ is free abelian of even rank.
It is trivial if $\Gamma$ has one of the types
$A_1$ or  $A_\ell$ with $\ell \geq 4$,
or of type $D_\ell$ with $\ell \geq 5$,
or of one of the types $E_6$, $E_7$, $E_8$, $H_3$,  $H_4$;
the group $A$ has rank 4 if the type of $\Gamma$ is $B_3$ 
and it has rank 2 if $\Gamma$ has one of the types
$A_2$, $A_3$, $B_4$ or $D_4$.
Finally, the group $A$ is of rank $2n$ if $\Gamma$ is of type $I_2(2n+1)$ 
with $m \geq 2$.

\item If $\Gamma_{\ab}$ is \emph{free abelian of rank 2}  the Coxeter graph of $\Gamma$
is a line that contains an edge with label 4.
The system is then either of type $F_4$, 
or of type $B_\ell$ with $\ell \geq 2$, or of type $I_2(2n)$ with $n \geq 3$.
In the first case $A = \Gamma_{ab}$ is free abelian of rank 4;
if $\Gamma$ is of type $B_\ell$ 
then $A$  is free abelian of infinite rank for $\ell = 2$,
it is free abelian of rank 4 if $\ell = 3$ and free abelian of rank 2 if $\ell = 4$, 
and it is reduced to 0 for $\ell \geq 5$.
If the system is of type  $I_2(2n)$ with $n \geq 3$, 
the group $A$ is free abelian of infinite rank.
\end{enumerate}
%
\subsubsection{Comments on Theorem 
\ref{thm:Artin-systems-with-perfect-derived-groups}
and on its Corollaries \ref{crl:Artin-systems-with-perfect-derived-groups}
and \ref{crl:Artin-systems-with-perfect-derived-groups-II}}
\label{sssec:Comments-on-perfectness-of-derived-group}
%
Among the results obtained in Section \ref{sec:Artin-systems-of-finite-type}
there are some 
which deal with \emph{Artin systems of infinite type}.
Such result are Theorem 
\ref{thm:Artin-systems-with-perfect-derived-groups}
and its Corollaries
\ref{crl:Artin-systems-with-perfect-derived-groups}
and \ref{crl:Artin-systems-with-perfect-derived-groups-II}.
All three results list conditions 
which imply that the derived group of the considered Artin system 
$(\Gamma,\Delta(S, E, \lambda))$ is perfect.
The common assumptions are of two kinds:
hypothesis (i) requires 
that the graph of the system contain a spanning tree $T$
all whose edges are labelled by odd numbers,
while hypothesis (ii) demands
that the commuting relation $s s' = s's$ be satisfied 
for every pair $\{s,s\}$ of vertices in the spanning tree $T$ 
which are at distance 2. 
As we shall see below 
these two hypotheses do not imply 
that $\Gamma'$ is perfect.

Theorem 
\ref{thm:Artin-systems-with-perfect-derived-groups}
and its corollaries list therefore a third hypothesis.
In the theorem it requires 
that there exist an edge $e_0 = \{s_1, s_2\}$ in the spanning tree $T$ 
such that the image of $d_2$ contain the difference $b_1- b_2$ of the basis vectors associated to $s_1$ and $s_2$, respectively.
This condition holds 
if the Artin system contains a subsystem of type $A_4$,
a condition formulated in Corollary 
\ref{crl:Artin-systems-with-perfect-derived-groups}
and implying that the spanning tree $T$ contains two generators $s_1$, $s_4$ 
which are at distance 3 and commute.
The condition holds also 
if the system contains a subsystem of type $H_3$
(see Corollary \ref{crl:Artin-systems-with-perfect-derived-groups-II}).
If, however, $S$ has no pair of generators which commute and are at distance 3 
and if the labels of the spanning tree are all equal to the the same odd number greater than 1,
the derived group of $\Gamma$ need not be perfect,
as the following examples show.

\begin{example}
\label{examle-I2(odd)}
Let $(\Gamma, \Delta(S, E, \lambda))$ be an Artin system of type $I_2(2m + 1)$ 
and assume that $m > 0$.
Then $\Gamma'$ is a free group of rank $2m$
(see Proposition \ref{prp:Two-generator-Artin-groups-with-odd-label}).
\end{example}

\begin{example}
\label{example:Generalisation}
Assume $(\Gamma, \Delta(S, E, \lambda))$  is an Artin system 
whose graph has edges with only two labels,
the number 2 and an odd number $2m + 1$ greater than 1. 
Assume, in addition, that the edges with the odd label 
form a spanning tree $T$ of the graph
and that the edges with label 2 join only vertices of $T$ at even distance.
\emph{Then the derived group of $\Gamma$ 
maps onto a free group of rank  $2m$}.

To establish this claim,
fix a generator $s_0 \in S$ 
and use it to partition $S$ into two subsets $S_0$ and $S_1$ like this:
the set $S_0$ consists of all generators in $S$ at even distance from $s_0$;
here the distance of $s_0$ to another vertex $s$ is defined 
to the number of edges of the unique geodesic path on $T$ from $s_0$ to $s$. 
The set $S_1$, on the other hand, consists of all generators in $S$ 
at odd distance from $s_0$.
Note that $S_0$ and $S_1$ are disjoint and that their union is $S$. 

Let $(\bar{\Gamma}, \bar{\Delta}(\bar{S}, \bar{E}, \bar{\lambda}))$ 
be the Artin system of type $I_2(2m+1)$ 
with generating set $\bar{S} = \{\bar{s}_0, \bar{s}_1\}$.
Let $f \colon S \to \bar{S}$ be the function
which assigns to every generator $s \in S_0$ the element $\bar{s}_0$
and to every generator $s \in S_1$ the element $\bar{s}_1$,
and define $\tilde{f} \colon F(S) \epi F(\bar{S})$ to be the epimorphism of the free group on $S$ onto the free group on $\bar{S}$ that extends $f$.

The defining relations of $(\Gamma, \Delta(S, E, \lambda))$ are of two kinds.
For every pair $\{s, s'\}$ of adjacent vertices of $T$ there is a relation of the form
$ss's \cdots =s'ss'\cdots$ between alternating products of length $2m + 1$; 
the epimorphism $\tilde{f}$ maps this relation 
to a relation of $(\bar{\Gamma}, \bar{\Delta}(\bar{S}, \bar{E}, \bar{\lambda}))$.
In addition, 
there are defining relations 
which state
that certain generators $s_1$ and $s_2$ commute.
As these generators are required to be at even distance from each other,
$f$ sends these generators  to one and the same element of $\bar{S}$
and so $f(s_1)$ and $f(s_2)$ commute.
It follows that $\tilde{f}$ extends an epimorphism 
$\pi \colon \Gamma \epi \bar{\Gamma}$.
This epimorphism induces an epimorphism of $\Gamma'$ onto $\bar{\Gamma}'$.
The claim now follows from the fact 
that $\bar{\Gamma}'$ is a free group of rank $2m$
(see Proposition \ref{prp:Two-generator-Artin-groups-with-odd-label}).
\end{example}

Specimens satisfying the assumptions of Example 
\ref{example:Generalisation}
can be found among the trees displayed on pages 143--144 in \cite{Hum90},
provided one does not add to many commuting relations to the relations represented by the edges of the trees.
%

\section{Classes of groups with infinitely related metabelian tops}
\label{sec:Classes-of-groups-with-infinitely-related-metabelian-tops}
%
In this section, 
I have assembled some familiar classes of finitely generated groups
which enjoy the property 
that the metabelian top of every member in the class is infinitely related.
%
\subsection{Groups with deficiency greater than $1$}
\label{ssec:Deficiency-bigger-than-1}
I begin by recalling the notion of deficiency of a finitely presented group.
\begin{definition}
\label{definition:Deficiency-of-fp-group}
Suppose  $\Gamma$ is a group 
admitting a finite presentation,
say 
\[
\PP 
= 
\langle x_1, x_2, \ldots, x_{m(\PP)} \mid r_1, r_2, \ldots, r_{n(\PP)} \rangle
\epi \Gamma.
\]
The deficiency $\defi(\PP)$ of this presentation $\PP$ 
is the difference  $m(\PP)-n(\PP)$,
while the \emph{deficiency of} $\Gamma$ is the maximum of the differences $m(\PP)-n(\PP)$
over all finite presentations of $\Gamma$.
\end{definition}

\begin{remark}
\label{remark:Existence-deficiency}
The integer 
$
\defi(\Gamma) = \max \{\defi(\PP) \mid \PP \text{ finite presentation of }\Gamma\}
$
exists since there is an upper bound on the deficiencies of all presentations,
namely
\begin{equation}
\label{eq:Bound-deficiency}
\defi(\Gamma) \leq  \rk \Gamma_{\ab} - s(H_2(\Gamma,\Z)).
\end{equation}
Here $\rk A$ denotes the torsion-free rank of the abelian group $A$, 
while $s(A)$ is the minimal number of generators of the finitely generated abelian group $A$.
(Inequality \eqref{eq:Bound-deficiency} is due to P. Hall; 
see, \eg\cite[Lemma 1.2]{Eps61}).
\end{remark}

\subsubsection{Deficiency and infinitely related metabelianisation}
\label{sssec:Deficiency-and-infinite-relatedness-of-G/G''}
In \cite[Theorem 2]{Bau76}, 
G. Baumslag proves 
that the second homology group $H_2(\Gamma,\Z)$ of a finitely presented group is infinitely generated,
whenever $\Gamma$ admits a presentation with $m \geq 2$ generators 
and at most $m-2$ relators.
Hopf's formula
\footnote{see, \eg \cite[p.\;204]{HiSt97}}
then implies that
the metabelian top of such a group $\Gamma$ is infinitely related,
and so one has:
\begin{prp}
\label{prp:Fp-groups-with def>1}
Let $\Gamma$ be a finitely presented group with $m \geq 2$ generators and at most $m-2$ relators. Then $\Gamma/\Gamma''$  is infinitely related.
\end{prp}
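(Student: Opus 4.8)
The plan is to reduce the statement to the cited result of G. Baumslag via Hopf's formula, exactly as foreshadowed in the remark preceding the proposition. First I would fix a finite presentation $\mathcal{P} = \langle x_1,\dots,x_m \mid r_1,\dots,r_n\rangle$ of $\Gamma$ with $m\geq 2$ and $n\leq m-2$, so that $\defi(\mathcal{P}) = m - n \geq 2$. By \cite[Theorem 2]{Bau76}, the hypothesis $m\geq 2$ and $n\leq m-2$ forces the integral homology group $H_2(\Gamma,\Z)$ to be infinitely generated (equivalently, not finitely generated as an abelian group). This is the external input on which everything rests.

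Next I would invoke Hopf's formula: if $\Gamma = F/R$ with $F$ free of rank $m$ and $R$ the normal closure of the relators, then $H_2(\Gamma,\Z) \cong (R\cap [F,F])/[F,R]$. The point is that the failure of $H_2(\Gamma,\Z)$ to be finitely generated propagates to a statement about the metabelian top. Concretely, consider the metabelian quotient $G = \Gamma/\Gamma''$, which is itself a finitely generated metabelian group, presented as $F/(R\cdot F'')$ (writing $F'' = [[F,F],[F,F]]$). If $G$ were finitely related, then by B. H. Neumann's Lemma \ref{prp:Neumans-result} the normal subgroup $R\cdot F''$ of $F$ would be the normal closure of finitely many elements; combined with the fact that $R$ itself is already the normal closure of the finitely many $r_j$, one gets a contradiction with the infinite generation forced by Baumslag's theorem. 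More precisely, the cleanest route is: $G$ finitely related would give, via the standard computation of $H_2$ of a metabelian group (or directly via the five-term exact sequence applied to $1\to \Gamma'/\Gamma'' \to G \to \Gamma_{\ab}\to 1$ together with the fact that $H_2(\Gamma,\Z)\epi H_2(G,\Z)$ is... ) — actually the most economical argument is to note that $H_2(\Gamma,\Z)$ surjects onto a quotient that measures the relations of $G$, so infinite generation of $H_2(\Gamma,\Z)$ is incompatible with $G$ being finitely related.

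The main obstacle, and the step I would want to state carefully, is the precise homological bookkeeping connecting ``$H_2(\Gamma,\Z)$ not finitely generated'' to ``$\Gamma/\Gamma''$ infinitely related.'' The honest claim is that these are linked by Hopf's formula in the following way: $\Gamma/\Gamma''$ admits a finite presentation if and only if $R\cdot F''$ is finitely generated as a normal subgroup of $F$, and since $R$ contributes only finitely many normal generators, this happens only if the ``new'' part coming from $F''$, measured precisely by $H_2(\Gamma/\Gamma'',\Z)$ relative to the known relators, is finitely generated. But $H_2(\Gamma,\Z)$ maps onto the obstruction controlling whether one needs infinitely many relators for $G$ beyond those of $\Gamma$ — in fact the relevant statement is that $H_2(\Gamma,\Z)$ is a quotient of $H_2(\Gamma/\Gamma'',\Z)$ fails, so the cleaner line is: Baumslag's theorem already asserts (this is how Baumslag phrases it) that under these hypotheses the \emph{metabelianisation} is infinitely related, so I would simply cite \cite[Theorem 2]{Bau76} for the conclusion directly, using Hopf's formula only to translate ``$H_2$ infinitely generated'' into ``infinitely related metabelian top'' in the one-line way indicated in the paper's own remark. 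Thus the proof is essentially: apply \cite[Theorem 2]{Bau76} to get $H_2(\Gamma,\Z)$ not finitely generated, then apply Hopf's formula to conclude that $\Gamma/\Gamma''$ cannot be finitely related. I would keep the write-up to a few lines, since the substance is entirely in the two cited results.
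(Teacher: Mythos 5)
Your intended route --- quote Baumslag's theorem, then invoke Hopf's formula --- is the same as the paper's, but the way you state the key input makes the argument collapse as written. You attribute to \cite[Theorem 2]{Bau76} the assertion that $H_2(\Gamma,\Z)$ is infinitely generated. That cannot be the content of any correct theorem: for a finitely presented group $\Gamma = F/R$, Hopf's formula exhibits $H_2(\Gamma,\Z) \cong (R\cap [F,F])/[F,R]$ as a subgroup of $R/[F,R]$, and $R/[F,R]$ is generated as an abelian group by the cosets of the finitely many relators, so $H_2(\Gamma,\Z)$ is \emph{always} finitely generated under your hypotheses. What the deduction needs, and what Baumslag's theorem actually supplies, is that the multiplicator of the \emph{metabelian quotient}, that is $H_2(\Gamma/\Gamma'',\Z)$, is not finitely generated whenever $\Gamma$ has a presentation with $m\geq 2$ generators and at most $m-2$ relators. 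With that corrected input the proof is exactly the paper's one-liner: a finitely presentable group has finitely generated $H_2$ (again by Hopf's formula, now applied to $G=\Gamma/\Gamma''$), so $\Gamma/\Gamma''$ cannot be finitely related. Note that Hopf's formula only links $H_2(G,\Z)$ to presentations of $G$ itself; it cannot transfer information about $H_2(\Gamma,\Z)$ to the presentability of the quotient $\Gamma/\Gamma''$, which is why your closing summary, taken literally, is a non sequitur.

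The intermediate excursion should simply be deleted: the proposed contradiction via Neumann's Lemma \ref{prp:Neumans-result} is never made precise, and there is no surjection from $H_2(\Gamma,\Z)$ onto an ``obstruction'' controlling the relators of $G=\Gamma/\Gamma''$. The natural comparison map goes $H_2(\Gamma,\Z)\to H_2(\Gamma/\Gamma'',\Z)$ and, under the present hypotheses, is as far from surjective as possible (finitely generated source, infinitely generated target); you half-notice this yourself before retreating to the direct citation. In fairness, the paper's own wording of Baumslag's result contains the same slip (writing $H_2(\Gamma,\Z)$ where the metabelian top is meant), but the argument is only valid for the multiplicator of $\Gamma/\Gamma''$, and your write-up should say so explicitly.
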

%
\begin{examples}
\label{examples:Groups with deficiencygreater-than-1}
Here are some familiar classes of finitely presented groups 
with $m \geq 2$ generators and at most $m-2$ relators:
\begin{enumerate}[a)]
\item \emph{Orientable surface groups of genus $g> 1$}:
such a group has $2g \geq 4$ generators and one defining relator.
\item \emph{Non-orientable surface groups of genus $g> 2$}:
such a group has $ g\geq 3$ generators and one defining relator.
\item
\emph{Fundamental groups of connected, orientable and bounded 3-manifolds}.
The deficiency of these groups is positive 
if the manifold $M$ has a boundary component of positive genus
and it is greater than 2 
if one of its boundary components has genus greater than 1
(see, \eg Lemma V.3 in \cite{Jac80}).
\end{enumerate}
\end{examples}
%
\subsection{Groups with images of special forms}
\label{ssec:Classes-of-groups-with-special-images}
%
In this section classes of groups satisfying one or more of the hypotheses of Theorem
\ref{thm:Infinitely-related-metabelian-top}
will be exhibited.
These classes consist of groups
that are of interest to many group theorists
and which satisfy not merely hypothesis (v) 
but also one of the stronger hypotheses (i) through (iv)
listed in the statement of Theorem
\ref{thm:Infinitely-related-metabelian-top}.

\subsubsection{Orientable surface groups}
\label{sssec:Orientable-surface-groups}%
Let $\Gamma_g$ be an orientable surface group of genus $g$.
Then $\Gamma_g$ has a presentation with $2g$ generators,
say $x_1$, $y_1$, \ldots, $x_g$, $y_g$,
and the single defining relator
\begin{equation}
\label{eq:defining-relator-of-orientable-surface-group}
r_g = x_1 y_1 x_1^{-1} y_1^{-1} \cdot x_2 y_2 x_2^{-1} y_2^{-1}
\cdots x_g y_g x_g^{-1} y_g^{-1}.
\end{equation} 
This relator shows that the assignments 
\[
x_1 \mapsto t_1, \quad y_1 \mapsto 1, 
\quad x_2 \mapsto t_2, \quad y_2 \mapsto 1, \ldots
x_g \mapsto t_g, \quad y_g \mapsto 1,
\] 
extend to an epimorphism of $\Gamma_g$ onto the free group with basis 
$\{t_1, t_2, \ldots, t_g\}$.
For $g > 1$ the surface group $\Gamma_g$
 satisfies therefore hypothesis (i) listed in the statement of 
Theorem \ref{thm:Infinitely-related-metabelian-top}.

\subsubsection{Non-abelian right-angled Artin groups}
\label{sssec:Non-abelian-right-angled-Artin-groups}
Let $\Gamma$ be a right angled Artin group with $m$ generators.
Then $\Gamma$ is the quotient of the free group with basis
$\{ s_1, \ldots, s_m \}$  
by the normal subgroup $R$ generated by the commutators $[s_i, s_j]$
with $(i,j)$ ranging over a subset $ I_\Gamma $ of the set of couples
\[
J = \{(i,j) \mid 1 \leq i < j \leq m \}.
\]  
If $I_\Gamma = J$
the group  $\Gamma$ is a free abelian group of rank $m$.
Otherwise, there exists a pair of indices $i_0$ and $j_0$ with 
$(i_0,j_0) \in J\smallsetminus I_\Gamma$,
whence $\Gamma$ maps onto the free group $F$ of rank 2 
generated by the images of $s_{i_0}$ and  $s_{j_0}$
and thus satisfies hypothesis (i) listed in the statement of 
Theorem \ref{thm:Infinitely-related-metabelian-top}.
\subsubsection{Artin systems all whose edges have even labels}
\label{sssec:Artin-systems-all-whose-edges-have-even-labels}
Let $(\Gamma, \Delta(S, E, \lambda))$ be an Artin system all whose edges have even labels.
If all edges have label 2
one is in the case treated before;
assume therefore that one edge,
say  $e_0 = \{i_0, j_0\}$, 
has a label greater or equal to 4,
and let $\Gamma_{e_0}$ be the corresponding edge group.
Consider the epimorphism $\rho \colon \Gamma \epi \Gamma_{e_0}$
which sends the generators $s_{i_0}$ and $s_{j_0}$ of $\Gamma$ 
to $s_{i_0}$ and $s_{j_0}$, respectively,
and all other generators $s$ to the trivial element of $\Gamma_{e_0}$. 
Let $G$ be  the metabelian top of $\Gamma_{e_0}$.
According to Proposition  
\ref{prp:Two-generator-Artin-groups-with-even-label}
it is infinitely related
and so hypothesis (v) of Theorem \ref{thm:Infinitely-related-metabelian-top}
is satisfied.

Actually one can do better:
the group $\Gamma_{e_0}$ maps onto a wreath product of the form 
$W = \Z_m \wr C_\infty$.
As a first step in the proof of this claim,
a new presentation of $ \Gamma_{e_0}$ will be derived.
Let $F$ be the free group with basis $\{s_0,s'_0\}$ 
and let $R_0 \triangleleft F$ 
be the normal subgroup generated 
by the defining relator $r_0$ of $ \Gamma_{e_0}$; 
it has the form
\[
r_0 =(s_0s'_0)^m \cdot (s_0' s_0)^{-m}
\]
The identity $x(yx)^m x^{-1} = (xy)^m$ is valid in every group.
So the defining relator $r_0$ can also be written in the form
\[
r_0 = s_0 (s_0's_0)^m s_0^{-1} \cdot (s_0's_0)^{-m}.
\]
Set $u = s_0's_0$. 
Then $\{s_0,u\}$ is a basis of $F$
and so the group $\Gamma_{e_0}$ can also given by the presentation
\begin{equation}
\label{eq:New-presentation-for-Gamma0}
\langle s_0, u \mid s_0\cdot u^m \cdot s_0^{-1} = u^m \rangle.
\end{equation}
Consider now the wreath product $W = \Z_m \wr C_\infty$.
It is the semi-direct product $\Z_m[C_\infty] \rtimes C_\infty$
of the group ring $\Z_m[C_\infty]$ by the infinite cyclic group $C_\infty$.
Let $c$ be a generator of $C_\infty$
and let $1 \in \Z_m$ denote the unit element of the ring $\Z_m$;
it has additive order $m$.
The assignments $s _0\mapsto c \in C_\infty$ and  
$u \mapsto 1 \in \Z_m$ extend to a homomorphism $\tilde{\rho} \colon F \to W$.
Since $\tilde{\rho}(u^m) = 0$ 
this homomorphism sends the relator $r_0$ to the trivial element of $W$
and  induces therefore a homomorphism
\begin{equation}
\label{eq:Mapping-onto-W}
\rho \colon \langle s_0, u \mid s_0 u^m s_0^{-1} = u^m \rangle \to W.
\end{equation}
Its image contains the unit element $1 \in \Z_m[C_\infty]$ 
and the generator $c$ of $C_\infty$;
these two elements generate $W$,
so the homomorphism $\rho $ is surjective.
It follows that the wreath product $W$ 
is an image of the edge group $\Gamma_{e_0}$ 
and hence of $\Gamma$,
the Artin group we started with.
All taken together,
this proves that 
\emph{the Artin group $\Gamma$ satisfies hypothesis (iv) in Theorem
\ref{thm:Infinitely-related-metabelian-top}}.
%
\subsubsection{Baumslag-Solitar groups}
\label{sssec:Baumslag-Solitar-groups}
Let $m$ and $n$ be relatively prime, natural numbers greater than 1
and consider the group
\begin{equation}
\label{eq:Baumslag-Solitar-II}
\Gamma_{m,n} = \langle a, t \mid t a^m t^{-1}= a^n \rangle
\end{equation}
briefly mentioned in Remarks \ref{remarks:History-of-preceding-theorem}.
The metabelian top of $\Gamma_{m,n}$
is the split extension of the subring $\Z[1/(m \cdot n)]$ 
of the field of rational numbers $\Q$ 
by the infinite cyclic group generated by $t$.
This metabelian top is well-known to be infinitely related (\cite{BaSt76}, 
\cf\cite[Theorem C]{BiSt78}).
The group $\Gamma_{m,n}$ \emph{satisfies therefore hypothesis (v) 
listed in Theorem
\ref{thm:Infinitely-related-metabelian-top}}.

\begin{problem}
\label{question:Use-of-hypotheses-in-Theorem 3.6}
In Theorem \ref{thm:Infinitely-related-metabelian-top} five hypotheses are listed
that allow one to conclude 
that the metabelian top of a group $\Gamma$ is infinitely related.
In the examples given in the above hypothesis (i), (iii), (iv) and (v) are used,
but not hypotheses (ii).
Find familiar examples of groups where hypothesis (ii) is satisfied, 
but hypothesis (i) does not hold.
\end{problem}

%

%
\bibliography{References}

\def\cprime{$'$} \def\cprime{$'$} \def\cprime{$'$} \def\cprime{$'$}
  \def\cprime{$'$} \def\cprime{$'$} \def\cprime{$'$} \def\cprime{$'$}
  \def\cprime{$'$} \def\cprime{$'$} \def\cprime{$'$} \def\cprime{$'$}
  \def\cprime{$'$} \def\cprime{$'$} \def\cprime{$'$} \def\cprime{$'$}
  \def\cprime{$'$} \def\cprime{$'$} \def\cprime{$'$} \def\cprime{$'$}
  \def\cprime{$'$} \def\cprime{$'$} \def\cprime{$'$}
\providecommand{\bysame}{\leavevmode\hbox to3em{\hrulefill}\thinspace}
\providecommand{\MR}{\relax\ifhmode\unskip\space\fi MR }
\providecommand{\MRhref}[2]{%
  \href{http://www.ams.org/mathscinet-getitem?mr=#1}{#2}
}
\providecommand{\href}[2]{#2}
\begin{thebibliography}{MKS04}

\bibitem[Bau61]{Bau61}
Gilbert Baumslag, \emph{Wreath products and finitely presented groups}, Math.
  Z. \textbf{75} (1960/1961), 22--28. \MR{0120269 (22 \#11026)}

\bibitem[Bau72]{Bau72}
\bysame, \emph{A finitely presented metabelian group with a free abelian
  derived group of infinite rank}, Proc. Amer. Math. Soc. \textbf{35} (1972),
  61--62. \MR{0299662 (45 \#8710)}

\bibitem[Bau73]{Bau73}
\bysame, \emph{Subgroups of finitely presented metabelian groups}, J. Austral.
  Math. Soc. \textbf{16} (1973), 98--110, Collection of articles dedicated to
  the memory of Hanna Neumann, I. \MR{0332999 (48 \#11324)}

\bibitem[Bau74]{Bau74}
\bysame, \emph{Finitely presented metabelian groups}, Proceedings of the
  {S}econd {I}nternational {C}onference on the {T}heory of {G}roups
  ({A}ustralian {N}at. {U}niv., {C}anberra, 1973) (Berlin), Springer, 1974,
  pp.~65--74. Lecture Notes in Math., Vol. 372. \MR{0404462 (53 \#8264)}

\bibitem[Bau76]{Bau76}
\bysame, \emph{Multiplicators and metabelian groups}, J. Austral. Math. Soc.
  Ser. A \textbf{22} (1976), no.~3, 305--312. \MR{0424948 (54 \#12906)}

\bibitem[Bau93]{Bau93}
\bysame, \emph{Topics in combinatorial group theory}, Lectures in Mathematics
  ETH Z\"urich, Birkh\"auser Verlag, Basel, 1993. \MR{1243634 (94j:20034)}

\bibitem[BG84]{BiGr84}
Robert Bieri and J.~R.~J. Groves, \emph{The geometry of the set of characters
  induced by valuations}, J. Reine Angew. Math. \textbf{347} (1984), 168--195.
  \MR{733052 (86c:14001)}

\bibitem[BGS86]{BGS86}
Gilbert Baumslag, Dion Gildenhuys, and Ralph Strebel, \emph{Algorithmically
  insoluble problems about finitely presented solvable groups, {L}ie and
  associative algebras. {I}}, J. Pure Appl. Algebra \textbf{39} (1986),
  no.~1-2, 53--94. \MR{816890 (88a:20046)}

\bibitem[BM09]{BaMi09}
Gilbert Baumslag and Charles~F. Miller, III, \emph{Reflections on some groups
  of {B}. {H}. {N}eumann}, J. Group Theory \textbf{12} (2009), no.~5, 771--781.
  \MR{2554768 (2011c:20057)}

\bibitem[Bro87]{Bro87a}
Kenneth~S. Brown, \emph{Finiteness properties of groups}, Proceedings of the
  {N}orthwestern conference on cohomology of groups ({E}vanston, {I}ll., 1985),
  vol.~44, 1987, pp.~45--75. \MR{885095 (88m:20110)}

\bibitem[Bro94]{Bro94}
\bysame, \emph{Cohomology of groups}, Graduate Texts in Mathematics, vol.~87,
  Springer-Verlag, New York, 1994, Corrected reprint of the 1982 original.
  \MR{1324339 (96a:20072)}

\bibitem[BS62]{BaSo62}
Gilbert Baumslag and Donald Solitar, \emph{Some two-generator one-relator
  non-{H}opfian groups}, Bull. Amer. Math. Soc. \textbf{68} (1962), 199--201.
  \MR{0142635 (26 \#204)}

\bibitem[BS76]{BaSt76}
Gilbert Baumslag and Ralph Strebel, \emph{Some finitely generated, infinitely
  related metabelian groups with trivial multiplicator}, J. Algebra \textbf{40}
  (1976), no.~1, 46--62. \MR{0422432 (54 \#10421)}

\bibitem[BS78]{BiSt78}
Robert Bieri and Ralph Strebel, \emph{Almost finitely presented soluble
  groups}, Comment. Math. Helv. \textbf{53} (1978), no.~2, 258--278.
  \MR{MR0498863 (58 \#16890)}

\bibitem[BS80]{BiSt80}
\bysame, \emph{Valuations and finitely presented metabelian groups}, Proc.
  London Math. Soc. (3) \textbf{41} (1980), no.~3, 439--464. \MR{591649
  (81j:20080)}

\bibitem[BS81]{BiSt81a}
\bysame, \emph{A geometric invariant for modules over an abelian group}, J.
  Reine Angew. Math. \textbf{322} (1981), 170--189. \MR{603031 (82f:20017)}

\bibitem[BS85]{BrSq85}
Matthew~G. Brin and Craig~C. Squier, \emph{Groups of piecewise linear
  homeomorphisms of the real line}, Invent. Math. \textbf{79} (1985), no.~3,
  485--498. \MR{782231 (86h:57033)}

\bibitem[BS16]{BiSt16}
Robert Bieri and Ralph Strebel, \emph{On groups of {PL}-homeomorphisms of the
  real line}, Mathematical Surveys and Monographs, vol. 215, American
  Mathematical Society, Providence, RI, 2016. \MR{3560537}

\bibitem[BZ03]{BZ03}
Gerhard Burde and Heiner Zieschang, \emph{Knots}, second ed., de Gruyter
  Studies in Mathematics, vol.~5, Walter de Gruyter \& Co., Berlin, 2003.
  \MR{1959408 (2003m:57005)}

\bibitem[CFP96]{CFP96}
J.~W. Cannon, W.~J. Floyd, and W.~R. Parry, \emph{Introductory notes on
  {R}ichard {T}hompson's groups}, Enseign. Math. (2) \textbf{42} (1996),
  no.~3-4, 215--256. \MR{1426438 (98g:20058)}

\bibitem[Cox34]{Cox34}
H.~S.~M. Coxeter, \emph{Discrete groups generated by reflections}, Ann. of
  Math. (2) \textbf{35} (1934), no.~3, 588--621. \MR{1503182}

\bibitem[DL99]{DiLe99}
Warren Dicks and Ian~J. Leary, \emph{Presentations for subgroups of {A}rtin
  groups}, PAMS \textbf{127} (1999), no.~2, 343--348. \MR{1605948 (99c:20050)}

\bibitem[Eps61]{Eps61}
D.~B.~A. Epstein, \emph{Finite presentations of groups and {$3$}-manifolds},
  Quart. J. Math. Oxford Ser. (2) \textbf{12} (1961), 205--212. \MR{0144321 (26
  \#1867)}

\bibitem[Fox53]{Fox53}
R.~H. Fox, \emph{Free differential calculus. {I}. {D}erivation in the free
  group ring}, Ann. of Math. (2) \textbf{57} (1953), 547--560. \MR{0053938
  (14,843d)}

\bibitem[Hal54]{Hal54}
P.~Hall, \emph{Finiteness conditions for soluble groups}, Proc. London Math.
  Soc. (3) \textbf{4} (1954), 419--436. \MR{0072873 (17,344c)}

\bibitem[HS97]{HiSt97}
Peter~J. Hilton and Urs Stammbach, \emph{A course in {H}omological {A}lgebra},
  second ed., Graduate Texts in Mathematics, vol.~4, Springer-Verlag, New York,
  1997. \MR{1438546 (97k:18001)}

\bibitem[Hum90]{Hum90}
James~E. Humphreys, \emph{Reflection groups and {C}oxeter groups}, Cambridge
  Studies in Advanced Mathematics, vol.~29, Cambridge University Press,
  Cambridge, 1990. \MR{1066460 (92h:20002)}

\bibitem[Jac80]{Jac80}
William Jaco, \emph{Lectures on three-manifold topology}, CBMS Regional
  Conference Series in Mathematics, vol.~43, American Mathematical Society,
  Providence, R.I., 1980. \MR{565450 (81k:57009)}

\bibitem[Lyn50]{Lyn50}
Roger~C. Lyndon, \emph{Cohomology theory of groups with a single defining
  relation}, Ann. of Math. (2) \textbf{52} (1950), 650--665. \MR{0047046
  (13,819b)}

\bibitem[MKS04]{MKS04}
Wilhelm Magnus, Abraham Karrass, and Donald Solitar, \emph{Combinatorial group
  theory}, second ed., Dover Publications Inc., Mineola, NY, 2004,
  Presentations of groups in terms of generators and relations. \MR{2109550
  (2005h:20052)}

\bibitem[MR06]{MuRo06}
Jamie Mulholland and Dale Rolfsen, \emph{Local indicability and commutator
  subgroups of {A}rtin groups}, preprint, \texttt{arXiv:0606116v1}, 2006.

\bibitem[MV95]{MeVa95}
John Meier and Leonard VanWyk, \emph{The {B}ieri-{N}eumann-{S}trebel invariants
  for graph groups}, Proc. London Math. Soc. (3) \textbf{71} (1995), no.~2,
  263--280. \MR{1337468 (96h:20093)}

\bibitem[Neu37]{Neu37}
B.~H. Neumann, \emph{Some remarks on infinite groups}, Proc. London Math. Soc.
  (2) \textbf{12} (1937), 120--127.

\bibitem[Neu60]{Neuw60}
L.~Neuwirth, \emph{The algebraic determination of the genus of knots}, Amer. J.
  Math. \textbf{82} (1960), 791--798. \MR{0120648 (22 \#11397)}

\bibitem[Par97]{Par97b}
Luis Paris, \emph{Parabolic subgroups of {A}rtin groups}, J. Algebra
  \textbf{196} (1997), no.~2, 369--399. \MR{1475116}

\bibitem[PS18]{PaSu17}
S.~Papadima and A.~I. Suciu, \emph{Infinitesimal finiteness obstructions},
  preprint, \texttt{arXiv:1711.07085v2 [math.GR]}, 2018.

\bibitem[Rap60]{Str60}
Elvira~Strasser Rapaport, \emph{On the commutator subgroup of a knot group},
  Ann. of Math. (2) \textbf{71} (1960), 157--162. \MR{0116047}

\bibitem[Rem73]{Rem73a}
N.~Remeslennikov, V, \emph{On finitely presented soluble groups}, Proc. Fourth
  All-Union Symposion on the Theory of Groups, 1973, pp.~164--169.

\bibitem[Rob96]{Rob96}
Derek J.~S. Robinson, \emph{A {C}ourse in the {T}heory of {G}roups}, second
  ed., Graduate Texts in Mathematics, vol.~80, Springer-Verlag, New York, 1996.
  \MR{1357169 (96f:20001)}

\bibitem[Ste92]{Ste92}
Melanie Stein, \emph{Groups of piecewise linear homeomorphisms}, Trans. Amer.
  Math. Soc. \textbf{332} (1992), no.~2, 477--514. \MR{1094555 (92k:20075)}

\bibitem[Str74]{Str74}
Ralph Strebel, \emph{Homological methods applied to the derived series of
  groups}, Comment. Math. Helv. \textbf{49} (1974), 302--332. \MR{0354896 (50
  \#7373)}

\bibitem[Str81]{Str81a}
\bysame, \emph{On one-relator soluble groups}, Comment. Math. Helv. \textbf{56}
  (1981), no.~1, 123--131. \MR{615619 (82e:20041)}

\bibitem[Str84]{Str84}
\bysame, \emph{Finitely presented soluble groups}, Group theory, Academic
  Press, London, 1984, pp.~257--314. \MR{780572 (86g:20050)}

\bibitem[Str13]{Str13a}
\bysame, \emph{Notes on the {S}igma-invariants, {V}ersion 2}, preprint,
  \texttt{arXiv:} \texttt{1204.0214v2, 1 Mar}, 2013.

\bibitem[Tro74]{Tro74}
H.~F. Trotter, \emph{Torsion-free metabelian groups with infinite cyclic
  quotient groups}, Proceedings of the {S}econd {I}nternational {C}onference on
  the {T}heory of {G}roups ({A}ustralian {N}at. {U}niv., {C}anberra, 1973)
  (Berlin), Springer, 1974, pp.~655--666. Lecture Notes in Math., Vol. 372.
  \MR{0374282 (51 \#10482)}

\end{thebibliography}
\bibliographystyle{amsalpha}%
\end{document}